\numberwithin{equation}{section}
\newtheorem{theorem}{Theorem}
\newtheorem{lemma}{Lemma}
\newtheorem{proposition}{Proposition}
\theoremstyle{definition}
\theoremstyle{remark}
\newtheorem{remark}{Remark}
\begin{document}
	
	\title[Perturbed Linear ODEs and SDEs]
	{Critical perturbation sizes for preserving decay rates in solutions of perturbed nonlinear differential equations}
	
	\author{John A. D. Appleby}
	\address{School of Mathematical
		Sciences, Dublin City University, Glasnevin, Dublin 9, Ireland}
	\email{john.appleby@dcu.ie} 
	
	\author{Subham Pal}
	\address{School of Mathematical
		Sciences, Dublin City University, Glasnevin, Dublin 9, Ireland}
	\email{subham.pal2@mail.dcu.ie}
	
	\thanks{JA is supported by the RSE Saltire Facilitation Network on Stochastic Differential Equations: Theory, Numerics and Applications (RSE1832).} 
	\subjclass{34D05; 34D20; 93D20; 93D09}
	\keywords{asymptotic stability,
		global asymptotic stability, fading perturbation, asymptotic preserving functions, regular variation, dominated variation, decay rates, polynomial asymptotic stability}
	\date{9 September 2025}
	
	\begin{abstract}
		This paper develops necessary and sufficient conditions for the preservation of asymptotic convergence rates of deterministically and stochastically perturbed ordinary differential equations in which the solutions of the unperturbed equations exhibit have power--law  decay to zero.   
	\end{abstract}
	
	\maketitle
	
	\section{Introduction} 
	This paper develops necessary and sufficient conditions for the preservation of asymptotic convergence rates of deterministically and stochastically perturbed ordinary differential equations in which the solutions of the unperturbed equations exhibit have power--law  decay to zero. 
	
	In this paper we classify the rates of convergence to a limit of the solutions of scalar ordinary and stochastic differential equations
	of the form 
	\begin{equation} \label{eq.odepert}
		x'(t)=-f(x(t))+g(t), \quad t>0; \quad x(0)=\xi,
	\end{equation}
	and 
	\begin{equation} \label{eq.sde}
		dX(t)=-f(X(t))\,dt + \sigma(t)\,dB(t), \quad t\geq 0,
	\end{equation}
	where $B$ is a one--dimensional standard Brownian motion. 
	
	We assume that the unperturbed equation
	\begin{equation} \label{eq.ode}
		y'(t)=-f(y(t)), \quad t>0; \quad y(0)=\zeta
	\end{equation}
	has a unique globally stable equilibrium (which we set to be at zero). This is characterised by the condition
	\begin{equation} \label{eq.fglobalstable}
		xf(x)>0 \quad\text{for $x\neq 0$,} \quad f(0)=0.
	\end{equation}
	In order to ensure that \eqref{eq.ode}, \eqref{eq.odepert} and \eqref{eq.sde} have
	continuous solutions, we assume
	\begin{equation} \label{eq.fgsigmacns}
		f\in C(\mathbb{R};\mathbb{R}), \quad g\in C([0,\infty);\mathbb{R}), \quad \sigma\in C([0,\infty);\mathbb{R}).
	\end{equation}
	The condition \eqref{eq.fglobalstable} ensures that any solution of \eqref{eq.odepert} or \eqref{eq.sde} is \emph{global} i.e., that
	\begin{align*}
		\tau_D:=\inf\{t>0\,:\,x(t)\not\in (-\infty,\infty)\}=+\infty, \\ 
		\tau_S=\inf\{t>0\,:\, X(t)\not\in (-\infty,\infty)\}=+\infty, \quad \text{a.s.}
	\end{align*}
	We also ensure that there is exactly one continuous solution of both \eqref{eq.odepert} and \eqref{eq.ode} by assuming
	\begin{equation} \label{eq.floclip}
		\text{$f$ is locally Lipschitz continuous on $\mathbb{R}$}.
	\end{equation}
	This condition ensures the existence of a unique continuous adapted process which obeys \eqref{eq.sde}. 
	
	In \eqref{eq.ode}, \eqref{eq.odepert} and \eqref{eq.sde}, the assumptions that we impose (which place restrictions on the asymptotic behaviour of solutions of \eqref{eq.ode}) ensures that $f(x)$ does \emph{not} have linear leading order behaviour as $x\to 0$. These conditions 
	also prevent solutions of \eqref{eq.ode} hitting zero in finite time. Since $f$ is continuous, we are free to define
	\begin{equation} \label{def.F}
		F(x)=\int_x^1 \frac{1}{f(u)}\,du, \quad x>0,
	\end{equation}
	and avoiding solutions of \eqref{eq.ode} to hitting zero in finite time forces
	\begin{equation} \label{eq.Ftoinfty}
		\lim_{x\to 0^+} F(x)=+\infty.
	\end{equation}
	We notice that $F:(0,\infty)\to\mathbb{R}$ is a strictly decreasing function, so it has an inverse $F^{-1}$. Clearly, \eqref{eq.Ftoinfty} implies that
	\[
	\lim_{t\to\infty} F^{-1}(t)=0.
	\]
	The significance of the functions $F$ and $F^{-1}$ is that they enable us to determine the rate of convergence of solutions of \eqref{eq.ode} to zero, because
	$F(y(t))-F(\zeta)=t$ for $t\geq 0$ or $y(t)=F^{-1}(t+F(\zeta))$ for $t\geq 0$. It is then of interest to ask whether solutions of \eqref{eq.odepert} or of 
	\eqref{eq.sde} will still converge to zero as $t\to\infty$, and to determine conditions (on $g$ and $\sigma$) under which the rate of decay of the solution of the underlying unperturbed equation \eqref{eq.ode} is preserved by the solutions of \eqref{eq.odepert} and \eqref{eq.sde}. The general question of sharp conditions under which stability of 
	perturbed equations  is preserved is examined by Strauss and Yorke in \cite{SY67,SY67b}. For other literature in this direction on limiting 
	equations, consult the references in~\cite{JAJC:2011szeged}.
	
	In this paper, we will impose hypotheses on $f$ which promote power--law type decay in the solution of \eqref{eq.ode}; then, granted those hypotheses, we will establish necessary and sufficient conditions on the forcing terms  $g$ and $\sigma$ which ensure the asymptotic behaviour of solutions of \eqref{eq.ode} is inherited by the solutions of \eqref{eq.odepert} or \eqref{eq.sde}. These results generalise work in Appleby and Patterson, in which power law decay in solutions of perturbed equations was generated by making the assumption that $f$ is a regularly varying function at zero, with index greater than unity. In a sense, one of the goals of this paper is to see to what degree the hypothesis of regular variation can be relaxed in the nonlinearity, while still obtaining precise rates of decay and characterisation of the situation wherein the asymptotic behaviour of the unperturbed equation is preserved. 
	
	There is a nice literature on power--like dynamics in solutions of SDEs, and we invite the reader to consult 
	those by Mao~\cite{MaoOx,Mao92}, Liu and Mao~\cite{LiuMao98,LiuMao:01a} and in Liu~\cite{Liu01} which deal with highly non--autonomous equations, as well 
	as those of Zhang and Tsoi~\cite{ZhangTsoi96, ZhangTsoi97} and Appleby, Rodkina and Schurz \cite{ARS06} which are concerned with autonomous nonlinear equations. 
	
	The role of regular variation in the asymptotic analysis of the asymptotic behaviour of differential equations is a very active area. An important monograph summarising themes in the research up to the year 2000 is Maric~\cite{Maric2000}. Another important strand of research on the exact asymptotic behaviour of non--autonomous ordinary differential equations (of first and higher order) in which the equations have regularly varying coefficients has been developed. For recent contributions, see for example work of Evtukhov and co--workers (e.g., Evtukhov and Samoilenko~\cite{EvSam:2011}) and Koz\'ma~\cite{Kozma:2012}, as well as the references in these papers. These papers tend to be concerned with non--autonomous features which are \emph{multipliers} of the regularly--varying state dependent terms, in contrast to the presence of the nonautonomous term $g$ in \eqref{eq.odepert}, which might be thought of as \emph{additive}. 
	
	The structure of the paper is as follows; Section 2 introduces the hypotheses on $f$ (and indirectly, on $F^{-1}$) that are used in the paper, as well as stating the main results of the paper. Section 3 gives proofs of a number of auxiliary results needed concern the functions $f$, $F$ and $F^{-1}$; this chapter  does not contain any analysis about perturbed differential equations. In Section 4, we show how to connect the differential equation we wish to study to an auxiliary pertuebed differential equation, and we prove the main results about this  auxiliary perturbed ordinary differential equation in Section 5. In Section 6, an asymptotic characterisation of the solution of \eqref{eq.odepert} is completed. Section 7 deals with the stochastic  differential equation \eqref{eq.sde}, and it turns out that the groundwork laid in Section 5 for the auxiliary differential equation can be reused with very minor modifications from the analysis needed in Chapter 6, once arguments for the stochastic equation are employed on a path--by--path basis.

	\section{Discussion of hypothesis; statement of main results}
	In order to do this with reasonable generality, and in a way which seems sensible for modelling, we will assume throughout that the function $f$ is odd and increasing (at least locally to $0$). Assuming oddness forces solutions to decay at the same rate on both sides of the equilibrium, which absent other information about the system, is a reasonable assumption, and has the advantage of placing essentially no restrictions on the sign or sign changes of forcing terms. These assumptions can be weakened to $f$ being asymptotic to an odd and increasing function, but at the cost of complicating the analysis. 
	
	The main assumption we impose on $f$ is that it is ``asymptotic preserving'' at zero. We say an  increasing function $f:(0,\infty)\to (0,\infty)$ is asymptotic preserving at zero if there exists $\epsilon_0\in (0,1)$ such that for all $\epsilon\in (0,\epsilon_0)$ there exists $\underline{\Phi}_f(\epsilon)>0$ such that 
	\begin{equation} \label{eq.fasypres}
		\liminf_{x\to 0^+} \frac{f((1-\epsilon)x)}{f(x)}= \underline{\Phi}_f(\epsilon)
	\end{equation}
	and $\Phi_f(\epsilon)\to 1$ as $\epsilon\to 0^+$. This hypothesis means that, if $a(t)\to 0$ and $b(t)\to 0$ as $t\to\infty$, and $a(t)/b(t)\to 1$, as $t\to\infty$, then $f(a(t))/f(b(t))\to 1$ as $t\to\infty$. In modelling terms, therefore, a small proportional change in the size of the input $y(t)$ (to $(1-\epsilon)y(t)$, say) or a measurement error in $y$ in \eqref{eq.ode}, does not affect the decay rate very much. Concretely, if $a$ is a continuous function such that  $0<a(t)\to 1$ as $t\to\infty$, $y_a$ satisfies the equation 
	\[
	y_a'(t)=-f(a(t)y_a(t)),
	\]  
	and $f$ is asymptotic preserving, then $y_a'(t)\sim -f(y_a(t))$ as $t\to\infty$, and so by asymptotic integration 
	\[
	\lim_{t\to\infty} \frac{F(y_a(t))}{t}=1.
	\]
	This gives the same rate of decay of $y_a$ and $y$, inasmuch as $F(y)$ and $F(y_a)$ have the same asymptotic behaviour. 
	
	It turns out that imposing the condition \eqref{eq.fasypres} excludes from consideration differential equations where the rate of decay of $y$ to zero is slower than any negative power of $t$. The next hypothesis on $f$ that we introduce rules out faster than power law decay; such  hypotheses are of interest, for example, if we want to compare directly the asymptotic behaviour of $y$ and $y_a$ above. To give motivation, in the example above, we have shown that  $F(y_a(t))\sim t$ and $F(y(t))\sim t$, and wonder whether this also implies that $y_a(t)\sim y(t)$ as $t\to\infty$. This situation would prevail if it were known that, for any functions $a$ and $b$ satisfying 
	$a(t)\sim b(t)\to 0$ as $t\to\infty$, then $F^{-1}{a(t)}\sim F^{-1}(b(t))$ as $t\to\infty$. As in the case of $f$ being asymptotic preserving above, we formulate this condition directly on $F^{-1}$ rather than needing to impose an asymptotic property for arbitrary pairs of functions $a$ and $b$. Precisely, we ask that for all $\epsilon\in (0,\epsilon_0)$ there is $\overline{\Phi}_F(\epsilon)\in (0,1)$ such that 
	\begin{equation} \label{eq.Finvasplimsup}
		\limsup_{t\to\infty} \frac{F^{-1}((1+\epsilon)t)}{F^{-1}(t)}=\overline{\Phi}_F(\epsilon)
	\end{equation}
	where  $\overline{\Phi}_F(\epsilon)\to 1$ as $\epsilon\to 0$ and $\underline{\Phi}_F(\epsilon)\in (0,1)$   such that 
	\begin{equation} \label{eq.Finvaspliminf}
		\liminf_{t\to\infty} \frac{F^{-1}((1+\epsilon)t)}{F^{-1}(t)}=\underline{\Phi}_F(\epsilon)
	\end{equation}
	where  $\underline{\Phi}_F(\epsilon)\to 1$ as $\epsilon\to 0$. 
	
	With these conditions imposed on $F^{-1}$, one is able to deduce from $F(y_a(t))\sim t$ and $F(y(t))\sim t$, firstly that $F(y_a(t))\sim F(y(t))\sim t$, and then that $y_a(t)\sim y(t)\sim F^{-1}(t)$ as $t\to\infty$. Thus the condition that $F^{-1}$ is asymptotic preserving in the sense above facilitates the estimation of direct leading order asymptotic behaviour of solutions of the differential equations. 
	
	The asymptotic conditions on $F^{-1}$ above may seem somewhat opaque, and hard to check, but both of these concerns will be addressed below. It is easy to deduce from the above limits that there exist $0<b<a$ such that 
	\[
	-a\leq \liminf_{t\to\infty} \frac{\log F^{-1}(t)}{\log t}\leq \limsup_{t\to\infty} \frac{\log F^{-1}(t)}{\log t}\leq -b,
	\]
	so that the conditions \eqref{eq.Finvaspliminf} and \eqref{eq.Finvasplimsup} are roughly consistent with power law decay in $y$, since the last limits imply $t^{-a-\epsilon}<y(t)<t^{-b+\epsilon}$ for all $\epsilon>0$ and all $t$ sufficiently large. If one seeks faster than power--law decay in $y$, a (non--asymptotic preserving) condition on $F^{-1}$ such as 
	\begin{equation}
		\label{eq.Finvnotpres}
		\lim_{t\to\infty} \frac{F^{-1}((1+\epsilon)t)}{F^{-1}(t)}=0, \quad \epsilon>0
	\end{equation}
	is more appropriate, while slower than power--law decay in $y$ comes from 
	\begin{equation}
		\label{eq.Finvhyperpres}
		\lim_{t\to\infty} \frac{F^{-1}((1+\epsilon)t)}{F^{-1}(t)}=1, \quad \epsilon>0,
	\end{equation}
	and the study of these cases will be carried out elsewhere. In this sense, we may think of this work as a single part of a (three--part) project to determine necessary and sufficient conditions on forcing terms, so that perturbed differential equations inherit the asymptotic behaviour of the unperturbed ODE, and that the project is sub--divided into parts according to whether $F^{-1}$ obeys \eqref{eq.Finvaspliminf} and \eqref{eq.Finvasplimsup} (this paper), or  \eqref{eq.Finvhyperpres} or \eqref{eq.Finvnotpres}.  
	
	We have already noted that $f$ being asymptotic preserving rules out \eqref{eq.Finvhyperpres}; to rule out the faster than power law decay in \eqref{eq.Finvnotpres}, we will assume another asymptotic condition on $f$, which at the same time will imply the necessary asymptotic preserving properties of $F^{-1}$ in \eqref{eq.Finvaspliminf} and \eqref{eq.Finvasplimsup}. This condition is 
	\begin{equation} \label{eq.fasypresmu}
		\limsup_{x\to 0^+} \frac{f(\mu x)}{f(x)}=:\bar{\Phi}_f(\mu)<\mu, \quad \text{ for all $\mu<1$}.
	\end{equation}
	A good deal of preparatory work is needed to show that this condition generates the right conclusions in general, and is natural in the context of asymptotic preserving $y$. This will be provided in the next section. We also note that this condition (together with \eqref{eq.fasypres} and others) bear similarity to the regular variation property, but are weaker than it. For results concerning regular variation, see \cite{BGT}.
	
	To give some initial intuition that our framework covers ``power law'' type decay, but not faster or slower than power law cases, we consider some simple examples of $f$. 
	
	First, for power law behaviour, we take $f(x)=x^\beta$ for $\beta>1$. Then $f$ is increasing on $(0,\infty)$, evidently obeys \eqref{eq.fasypres} and \eqref{eq.fasypresmu}. Moreover, it is easy to see that $F(x)\sim (\beta-1)^{-1}x^{1-\beta}$ as $x\to 0$, from which is can readily be deduced that      
	\[
	F^{-1}(t)\sim \left(\frac{1}{\beta-1}\right)^{-\frac{1}{\beta-1}} t^{-\frac{1}{\beta-1}}, \quad t\to\infty,
	\]
	and this asymptotic relation for $F^{-1}$ shows that it obeys the asymptotic conditions \eqref{eq.Finvaspliminf} and \eqref{eq.Finvasplimsup}. 
	
	As an example of faster than power law decay, we consider the simple equation where we have exponential decay. For that, take $f(x)=x$; then $f$ is increasing and obeys \eqref{eq.fasypres}, but not \eqref{eq.fasypresmu}; indeed 
	\[
	\limsup_{x\to 0} \frac{f(\mu x)}{f(x)}=\mu=:\bar{\Phi}_f(\mu),
	\] 
	so \eqref{eq.fasypresmu} \textit{just fails} to hold (suggesting that the condition \eqref{eq.fasypresmu} is quite sharp). Also since $F(x)=-\log x$, we have $F^{-1}(x)=e^{-x}$ and hence the properties \eqref{eq.Finvaspliminf} and \eqref{eq.Finvasplimsup} do not hold, while property 
	\eqref{eq.Finvnotpres} does. 
	
	As a final simple example, suppose $f(x)=\exp(-1/x)$ for $x\in (0,1)$ and $f(0)=0$. In this case $f$ is so flat at zero that all its derivatives vanish as $x\to 0^+$, and so we expect the rate of decay of $y$ to be very slow.  Indeed, $f$ does not obey \eqref{eq.fasypres}, but does obey \eqref{eq.fasypresmu}. Making a substitution in the integral, we have that  
	\[
	F(x)=\int_{1}^{1/x} \frac{e^v}{v^2}\,dv \sim x^2 e^{1/x}, \quad x\to 0^+,
	\]
	from which we see as $t\to\infty$ that $F^{-1}(t)^2\exp(1/F^{-1}(t))\sim t$. Since $\log$ preserves the asymptotic order, this implies that  $2\log F^{-1}(t)+\frac{1}{F^{-1}(t)}\sim \log t$ as $t\to\infty$, and since the second term is dominant on the left hand side as $t\to \infty$, this gives
	\[
	F^{-1}(t)\sim \frac{1}{\log t}, \quad t\to\infty.
	\] 
	Thus \eqref{eq.Finvhyperpres} holds. However, the non--unit limits in \eqref{eq.Finvaspliminf} and \eqref{eq.Finvasplimsup} means that $F^{-1}$ does not satisfy these limits. 
	
	With some intuition about hypotheses on $f$ now given, we can discuss the main results of the paper. They are mostly proven (and sometimes restated) in Chapter 6. They give (essentially) necessary and sufficient conditions under which the asymptotic rate of decay of solutions of the perturbed 
	equations are inherited from those of \eqref{eq.ode}. We consider first the deterministic equation \eqref{eq.odepert}. 
	\begin{theorem} \label{thm.expertodeiff}
		Suppose that $f$ is an increasing and odd function and is continuous. Suppose further it obeys the properties \eqref{eq.fasypres} and \eqref{eq.fasypresmu}. Suppose further that $g$ is continuous, and that it is known that $x(t)\to 0$ as $t\to\infty$. 
		Then the following statements are equivalent
		\begin{itemize}
			\item[(a)] The functions $f$ and $g$ obey
			\[
			\lim_{t\to\infty} \int_0^t g(s)\,ds \text{ exists}, \quad \lim_{t\to\infty} \frac{\int_t^\infty g(s)\,ds}{F^{-1}(t)}=0;
			\]
			\item[(b)] There is $\lambda\in \{-1,0,1\}$ such that
			\[
			\lim_{t\to\infty} \frac{x(t)}{F^{-1}(t)}= \lambda.
			\]
		\end{itemize} 
	\end{theorem}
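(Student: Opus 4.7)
The backbone of the argument is the substitution
\[
u(t) := x(t) + \int_t^\infty g(s)\,ds,
\]
which under the first half of (a) is well defined, tends to zero, and satisfies the clean identity $u'(t) = -f(x(t))$. Integrating this identity on $[t,\infty)$ yields the representation $u(t) = \int_t^\infty f(x(s))\,ds$, to be compared with the companion identity $F^{-1}(t) = \int_t^\infty f(F^{-1}(s))\,ds$ that follows from $(F^{-1})'(s) = -f(F^{-1}(s))$ together with $F^{-1}(0) = 1$. The whole proof is organised around comparing these two integral expressions, using the error estimate $x(t) - u(t) = o(F^{-1}(t))$ supplied by the second half of (a) to pass between $x$ and $u$.

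For the direction $(b)\Rightarrow(a)$, I would integrate \eqref{eq.odepert} from $t$ to $T$ and let $T\to\infty$. Under (b), $|x(s)|$ is comparable to $F^{-1}(s)$ or smaller, so the asymptotic preserving hypotheses and the monotonicity of $f$ bound $|f(x(s))|$ by a constant multiple of $f(F^{-1}(s))$; the integral $\int_0^\infty f(x(s))\,ds$ therefore converges, which gives both the existence of $\lim_{t\to\infty}\int_0^t g(s)\,ds$ and the tail identity $\int_t^\infty g(s)\,ds = -x(t) + \int_t^\infty f(x(s))\,ds$. When $\lambda\in\{\pm 1\}$, oddness of $f$ and \eqref{eq.fasypres} give $f(x(s)) \sim \lambda f(F^{-1}(s))$, so the leading contributions $-\lambda F^{-1}(t)$ and $\lambda F^{-1}(t)$ cancel and the sum is $o(F^{-1}(t))$; when $\lambda=0$ both terms are individually $o(F^{-1}(t))$.

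For the converse $(a)\Rightarrow(b)$ the plan runs in three stages. \emph{Stage 1:} prove $\overline{\Lambda} := \limsup u(t)/F^{-1}(t) \leq 1$ and $\underline{\Lambda} := \liminf u(t)/F^{-1}(t) \geq -1$ by comparison with the shifted unperturbed solutions $t\mapsto \pm F^{-1}(t-c)$, using that $u'(t) = -f(x(t))$ is, modulo the $o(F^{-1})$ error between $x$ and $u$, the unperturbed ODE. \emph{Stage 2:} rule out subsequential limits $\mu\in(0,1)\cup(-1,0)$: if $u(t)/F^{-1}(t)\to\mu\in(0,1)$ then \eqref{eq.fasypres} combined with \eqref{eq.fasypresmu} bounds $f(x(s))/f(F^{-1}(s))$ above by $\overline{\Phi}_f(\mu)+\delta$ for $s$ large, and substituting into $u(t)/F^{-1}(t) = \int_t^\infty f(x(s))\,ds / \int_t^\infty f(F^{-1}(s))\,ds$ yields $\mu \leq \overline{\Phi}_f(\mu)$, contradicting the strict inequality in \eqref{eq.fasypresmu}. \emph{Stage 3:} promote limsup and liminf to a common limit. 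When $u$ is eventually of one sign, differentiating $\rho(t) := u(t)/F^{-1}(t)$ gives
\[
\rho'(t) = \frac{f(F^{-1}(t))}{F^{-1}(t)}\left[\rho(t) - \frac{f(x(t))}{f(F^{-1}(t))}\right],
\]
which, using \eqref{eq.fasypresmu} and the asymptotic preserving hypotheses, is positive when $\rho(t)\in(0,1)$ and negative when $\rho(t)>1$, forcing $\rho(t)\to 1$; the symmetric case treats $u$ eventually negative. When $u$ changes sign infinitely often, restricting the representation to intervals $[t_n,t]$ between consecutive zeros gives $|u(t)| \leq C\int_{t_n}^{t_{n+1}} f(F^{-1}(s))\,ds = C[F^{-1}(t_n) - F^{-1}(t_{n+1})]$, and Stage 1 together with the asymptotic preserving of $F^{-1}$ forces $|u(t)|/F^{-1}(t)\to 0$, i.e.\ $\lambda=0$. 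The principal obstacle is the oscillatory branch of Stage 3, where one has to rule out persistent excursions of $u$ up to order $F^{-1}$ between zeros; throughout, the strict inequality $\overline{\Phi}_f(\mu)<\mu$ supplied by \eqref{eq.fasypresmu} is the decisive ingredient, because without it the contractive step in Stage 2 would collapse to $\mu\leq\mu$ and intermediate decay rates would remain consistent with (a).
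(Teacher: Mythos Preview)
Your substitution $u=x-\Gamma$ (equivalently the paper's $z$), the integral identity $u(t)=\int_t^\infty f(x(s))\,ds$, and the comparison with $F^{-1}(t)=\int_t^\infty f(F^{-1}(s))\,ds$ are exactly the paper's framework. Your $(b)\Rightarrow(a)$ argument is correct and essentially what the paper has in mind. Your Stage~2 is the paper's Lemma~\ref{lemma.step3zsimFinv} (STEP~3): bounding $|u(t)|\leq\int_t^\infty f((\Lambda+\epsilon)F^{-1}(s))\,ds$, substituting $y=F^{-1}(s)$, and invoking $\bar\Phi_f(\Lambda)<\Lambda$ to exclude $\Lambda\in(0,1)$. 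Stage~1 corresponds to the paper's STEPs~1--2, which it proves by a contradiction based on asymptotic integration (Lemma~\ref{lemma.step1zsimFinv}) and by a comparison argument (Lemma~\ref{lemma.step2zsimFinv}).

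The genuine gap is in Stage~3. Your monotonicity claim ``$\rho'>0$ when $\rho\in(0,1)$'' needs $f(x(t))/f(F^{-1}(t))<\rho(t)$ \emph{pointwise} for all large $t$, but \eqref{eq.fasypresmu} only gives $\limsup_{y\to 0}f(\mu y)/f(y)<\mu$ for each \emph{fixed} $\mu$: there is no uniformity in $\mu$, so when $\rho(t)$ varies you cannot conclude the inequality holds at every large $t$. In particular nothing prevents $f(\rho(t)F^{-1}(t))/f(F^{-1}(t))$ from exceeding $\rho(t)$ along a sequence, so $\rho'$ may change sign and the monotonicity argument collapses. The oscillatory branch has the same difficulty: the bound $|u(t)|\leq C\bigl(F^{-1}(t_n)-F^{-1}(t_{n+1})\bigr)$ is only $o(F^{-1}(t))$ if $t_{n+1}/t_n\to1$, and nothing you have established forces the zeros of $u$ to cluster that tightly.

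The paper handles this step (its STEP~4) differently: rather than differentiating $\rho$, it constructs explicit upper and lower solutions for $z'=-f(z+\Gamma)$ and applies a comparison principle. The a~priori information from STEPs~1--3 (that $|z(t)|/F^{-1}(t)$ accumulates at $1$) is used only to furnish good initial data at a large starting time for these comparison functions, and the argument then runs forward from there without needing any uniform-in-$\mu$ version of \eqref{eq.fasypresmu}. Replacing your Stage~3 with such a comparison construction (e.g.\ lower barrier $t\mapsto(1-\epsilon)F^{-1}(t+c)$ once $z$ is known to be close to $F^{-1}$ at some large time) would close the gap.
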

	The cases $\lambda=\pm 1$ reproduce the asymptotic behaviour of the solution $y$ of \eqref{eq.ode} according to whether the initial condition is positive or 
	negative. The case $\lambda=0$ means that solutions of the perturbed equation decay more rapidly to zero than those of the unperturbed equation. We believe that this behaviour is rare, but it can arise for special perturbations. To see 
	this, let $\xi\neq 0$ and define  
	\[ 
	g(t)=-2\xi F^{-1}(t) f(F^{-1}(t)) + f(\xi F^{-t})^2, \quad t\geq 0.
	\]
	Then $x(t)=\xi F^{-1}(t)^2$ for $t\geq 0$ is a solution of \eqref{eq.odepert}. It is not hard to show that the integral of $g$ has a finite limit, and that 
	\begin{equation} \label{eq.lambda0example}
		\int_t^\infty g(s)\,ds = \xi F^{-1}(t)^2 - \int_t^\infty f(\xi F^{-1}(s)^2)\,ds.
	\end{equation}
	Since $f$ is increasing and obeys \eqref{eq.fasypresmu}, it can be shown that the integral is $o(F^{-1}(t))$ as $t\to\infty$, by first using a substitution to write it as 
	\[
	\int_0^{F^{-1}(t)} \frac{f(\xi u^2)}{f(u)}\,du,
	\]
	and then showing that the integrand has zero limit as $u\to 0^+$. 
	Since the first term on the right in \eqref{eq.lambda0example}  is clearly $o(F^{-1}(t))$ as $t\to\infty$, we have in this case that $\int_t^\infty g(s)\,ds = o(F^{-1}(t))$ as $t\to\infty$, showing that the case $\lambda=0$ is possible for a general equation. 
	
	It is easy to generate examples for which $\lambda=\pm 1$, and the sign of $\lambda$ and the initial condition do not need to be the same. Let $\eta(t)=F^{-1}(t)+(\xi-1)F^{-1}(t)^2$, so $\eta(0)=\xi$, and note, since $F^{-1}(t)\to 0$ as $t\to\infty$, and $f$ is asymptotic preserving, that $f(\eta(t))\sim f(F^{-1}(t))$ as $t\to\infty$. Let $g(t)=\eta'(t)+f(\eta(t))$ for $t\geq 0$. Then $x$ is a solution of \eqref{eq.odepert}. Moreover, $x(t)=\eta(t)\sim F^{-1}(t)$ as $t\to\infty$, so we are in the case when $\lambda=+1$, although there is no restriction on the sign of $\xi$. Moreover, we have that $\int_0^t g(s)\,ds$ has a finite limit and that 
	\[
	\int_t^\infty g(s)\,ds = \eta(t)-\int_t^\infty f(\eta(s))\,ds
	\]   
	Since $\eta(t)\sim F^{-1}(t)$, we see the integrand is asymptotic  to $f(F^{-1}(s))$ as $s\to\infty$, and hence 
	\[
	\int_{t}^\infty  f(\eta(s))\,ds\sim \int_t^\infty f(F^{-1}(s))\,ds = F^{-1}(t), \quad t\to \infty. 
	\]
	Due to these asymptotic relations, we have that $\int_t^\infty g(s)\,ds=o(F^{-1}(t))$ as $t\to\infty$, so once again all the hypothesis are fulfilled.
	
	A similar argument with the function $\eta(t)=-F^{-1}(t)+(\xi+1)F^{-1}(t)^2$, and $g(t)=\eta'(t)+f(\eta(t))$ shows that $x(t)=\eta(t)$ is a solution of \eqref{eq.odepert}. In this case $x(t)\sim -F^{-1}(t)$ as $t\to\infty$, so we have generated an example of a perturbed equation where $\lambda=-1$, but there is no restriction on the sign of $\xi$. Arguing as above, one can show that $\int_0^t g(s)\,ds$ has a finite limit, and that $\int_t^\infty g(s)\,ds = o(F^{-1}(t)$ as $t\to\infty$.

	It is notable that Theorem \ref{thm.expertodeiff} does not require any sign or pointwise conditions on the rate of decay of $g$; indeed, it can be shown that $g$ need not be absolutely integrable, a strictly weaker condition than the first part of condition (a). Indeed 
	one can have that $\limsup_{t\to\infty} |g(t)|/\Gamma(t)=1$ for arbitrarily rapidly growing $G$, while solutions still obey condition (b). To see this, let $G$ be positive, continuous and increasing, with $G(t)\to\infty$ as $t\to\infty$, and define $G$ by 
	\[
	g(t)=G(t)\sin\left(\left\{\int_0^t G(s)\,ds\right\}^2\right), \quad t\geq 1,
	\]
	and let $g$ be any continuous function on $[0,1]$ so that the left and right limits of $g(t)$ are equal at $t=1$. Clearly we have $\limsup_{t\to\infty} |g(t)|/G(t)=1$. To see that $\int_0^t g(s)\,ds$ tends to a limit as $t\to\infty$, put $L(t)=(\int_0^t G(s)\,ds)^2$. We have for $t\geq 1$ that 
	\[
	\int_1^t g(s)\,ds = \int_{L(1)}^{L(t)} \frac{1}{u^{1/2}}\sin{u}\,du,
	\]
	by making the substitution $u=L(s)$. Using integration by parts on the righthand side, we see that 
	\[
	\int_{L(1)}^T  \frac{1}{u^{1/2}}\sin(u)\,du = -\frac{1}{\sqrt{T}}\cos(T)+\frac{\cos(L(1))}{\sqrt{L(1)}}+\frac{1}{2}\int_{L(1)}^T u^{-3/2} \cos(u)\,du,
	\]
	from which it is clear that the integral on the lefthand side tends to a finite limit as $T\to\infty$. Since $L(t)\to\infty$ as $t\to\infty$ it therefore follows that $\int_0^t g(s)\,ds$ tends to a finite limit, and so it is legitimate to consider $\int_t^\infty g(s)\,ds$ as an improper integral. Moreover, by making the same substitution as before, we get 
	\[
	\int_t^\infty g(s)\,ds = \int_{L(t)}^\infty \frac{\sin(u)}{\sqrt{u}}\,du.
	\]
	As before, 
	\[
	\int_T^\infty \frac{1}{u^{1/2}}\sin(u)\,du = \frac{1}{\sqrt{T}}\cos(T)+\frac{1}{2}\int_T^\infty u^{-3/2} \cos(u)\,du,
	\]
	so since the last integral grows no faster than $T^{-1/2}$ as $T\to\infty$, the righthand side is $O(T^{-1/2})$ as $T\to\infty$. Since $L(t)\to \infty$ as $t\to\infty$, we see that 
	\[
	\int_t^\infty g(s)\,ds = O(L(t)^{-1/2}) = O\left(\frac{1}{\int_0^t G(s)\,ds} \right), \quad t\to\infty.
	\]
	Moreover, if we allow $G$ to grow so rapidly that $\int_0^t G(s)\,ds$ tends to infinity faster than $1/F^{-1}(t)$ as $t\to\infty$, then $\int_t^\infty g(s)\,ds = o(F^{-1}(t))$ as $t\to\infty$, and the perturbed differential equation will have solution that behaves according to $x(t)\sim \lambda F^{-1}(t)$ as $t\to\infty$, where $\lambda\in \{-1,0,1\}$.   
	
	As mentioned before, the  
	oddness of $f$ is assumed so as to ensure that convergence rates from both sides of the equilibrium are the same.  
	
	If a less precise asymptotic bound on the solution is needed, the conditions on $\int_t^\infty g(s)\,ds$ can be somewhat relaxed. Since, under the conditions on $f$ in the last theorem, we have 
	that $y(t)=O(F^{-1}(t))$ as $t\to\infty$, it is reasonble to ask what is required for $x(t)=O(F^{-1}(t))$ as $t\to\infty$. A similar characterisation of the asymptotic behaviour in that case can now be given. 
	
	\begin{theorem} \label{thm.extpertodebdd}
		Suppose that $f$ is an increasing and odd function and is continuous. Suppose further it obeys the properties \eqref{eq.fasypres} and \eqref{eq.fasypresmu}. Suppose further that $g$ is continuous, and that it is known that $x(t)\to 0$ as $t\to\infty$. 
		Then the following statements are equivalent
		\begin{itemize}
			\item[(a)] The functions $f$ and $g$ obey
			\[
			\lim_{t\to\infty} \int_0^t g(s)\,ds \text{ exists}, \quad \int_t^\infty g(s)\,ds=O(F^{-1}(t)), \quad t\to\infty;
			\]
			\item[(b)] $x(t)=O(F^{-1}(t))$ as $t\to\infty$.
		\end{itemize} 
	\end{theorem}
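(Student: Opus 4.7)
The plan is to recycle the architecture of the proof of Theorem~1, replacing the $o(F^{-1}(t))$ control on the tail $G(t):=\int_t^\infty g(s)\,ds$ by $O(F^{-1}(t))$ control, with the conclusion on $x$ weakened from an exact limit to a big-$O$ bound. Whenever $G$ is well-defined write $g=-G'$, so \eqref{eq.odepert} becomes $(x(t)+G(t))'=-f(x(t))$; set $u(t)=x(t)+G(t)$ and read off
\[
u'(t) = -f(u(t)-G(t)),
\]
which is the unperturbed ODE with a vanishing perturbation inside the nonlinearity. Both implications will reduce, via this substitution, to comparison with $F^{-1}$ under the scale-transfer estimates for $f$ and $F^{-1}$ prepared in Section~3 from the hypotheses \eqref{eq.fasypres} and \eqref{eq.fasypresmu}.

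The easier direction is (b)$\Rightarrow$(a). Assume $|x(t)|\le K F^{-1}(t)$ for $t$ large. By oddness of $f$ and monotonicity we have $|f(x(t))|\le f(KF^{-1}(t))$, and a scale-transfer consequence of \eqref{eq.fasypres}--\eqref{eq.fasypresmu} (obtained in Section~3 by iterating the asymptotic-preserving inequality) yields $f(KF^{-1}(s))\le C\, f(F^{-1}(s))=-C(F^{-1})'(s)$ for all $s$ sufficiently large, so
\[
\int_t^\infty |f(x(s))|\,ds \le C F^{-1}(t).
\]
Integrating \eqref{eq.odepert} from $t$ to $T$, letting $T\to\infty$ and using $x(T)\to 0$ forces both the convergence of $\int_0^\infty g(s)\,ds$ and the identity $G(t)=\int_t^\infty f(x(s))\,ds - x(t)$, from which $G(t)=O(F^{-1}(t))$ is immediate.

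The harder direction is (a)$\Rightarrow$(b). Assume $|G(t)|\le M F^{-1}(t)$ and $x(t)\to 0$; one must show $\limsup_{t\to\infty}|x(t)|/F^{-1}(t)<\infty$. I would argue by a comparison on $|u|$: pick $\mu<1$ close to $1$ and use \eqref{eq.fasypresmu} to produce a constant $\lambda<1$ (depending on $\mu$ and $M$) and a shifted, time-rescaled comparison function $\psi(t)=F^{-1}(\lambda(t-T_0))$, which is itself a solution of the unperturbed equation on a shifted time axis. On the set where $u(t)$ exceeds a large multiple of $F^{-1}(t)$, the asymptotic-preserving lemmas of Section~3 give $f(u(t)-G(t))\ge \mu f(u(t))$, so $u$ satisfies the differential inequality $u'(t)\le -\mu f(u(t))$; a standard comparison/time-rescaling argument then confines $u$ above by a constant multiple of $\psi(t)$, and hence by a constant multiple of $F^{-1}(t)$ via the asymptotic preserving property of $F^{-1}$ in \eqref{eq.Finvasplimsup}. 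The symmetric argument from below, exploiting the oddness of $f$, completes the bound on $|x(t)|=|u(t)-G(t)|$.

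The main obstacle is the calibration of the constants in the upper-bound step, because $|G(t)|$ is only assumed comparable to $F^{-1}(t)$ (not small relative to it), so the shift inside $f(u-G)$ cannot be treated as a vanishing perturbation; one must instead extract a genuine scale factor. This is precisely where the subhomogeneous character of \eqref{eq.fasypresmu}, strengthened beyond the small-$\epsilon$ range of \eqref{eq.fasypres} by iteration, together with the corresponding properties of $F^{-1}$ in \eqref{eq.Finvaspliminf}--\eqref{eq.Finvasplimsup}, carries the burden of the argument. Once these uniform scale-transfer inequalities are available from Section~3, the remaining differential-inequality manipulations are routine.
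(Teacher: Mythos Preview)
Your substitution $u=x+G$ with $G(t)=\int_t^\infty g(s)\,ds$ is exactly the paper's device (there written $z=x-\Gamma$ with $\Gamma=-G$), reducing the question to the internally perturbed equation $u'=-f(u-G)$ of Section~4 and invoking Theorem~\ref{ThZbdd}/\ref{Txbdd}. Your argument for (b)$\Rightarrow$(a) is correct and self-contained: the integration identity $G(t)=\int_t^\infty f(x(s))\,ds-x(t)$, together with the scale bound $f(Ky)\le C f(y)$ for small $y$ obtained by iterating \eqref{eq.fasypres}, gives both parts of (a). The paper does not spell this direction out in the text, so here you are filling in rather than diverging.

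For (a)$\Rightarrow$(b) your outline has a gap. The inequality $u'\le -\mu f(u)$ is only available on the set $\{t:u(t)>NF^{-1}(t)\}$, and a comparison with $\psi(t)=F^{-1}(\lambda(t-T_0))$ cannot be run as a ``standard'' argument unless the inequality holds on a full half-line $[T_0,\infty)$; nothing you have written prevents $u$ from dipping below $NF^{-1}$ (killing the differential inequality) and then climbing again. The paper handles exactly this issue in the parallel proof for Theorem~\ref{thm.ThZ}: its STEP~1a (Lemma~\ref{lemma.step1zsimFinv}(i)) first establishes $\liminf_{t\to\infty}|u(t)|/F^{-1}(t)<\infty$ by contradiction---if the $\liminf$ were infinite then $G(t)=O(F^{-1}(t))=o(|u(t)|)$, so \eqref{eq.fasypres} gives $u'(t)\sim -f(u(t))$ on a half-line, and asymptotic integration together with the preserving property of $F^{-1}$ forces $|u(t)|\sim F^{-1}(t)$, a contradiction. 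That argument works verbatim under the $O$-hypothesis \eqref{eq.GammaOFinv}. It supplies a sequence $t_n\to\infty$ with $|u(t_n)|\le L\,F^{-1}(t_n)$, and the upper barrier is then constructed as a \emph{genuine supersolution} $w(t)=C\,F^{-1}(\lambda(t-t_n)+t_n)$ of the equation $u'=-f(u-G)$ on all of $[t_n,\infty)$ (checked using \eqref{eq.fasypresmu}, not a conditional inequality on $u$), with $w(t_n)=CF^{-1}(t_n)\ge u(t_n)$. Add this preliminary $\liminf$ step and rephrase the comparison as a supersolution for the full equation rather than a conditional inequality on $u$, and your sketch becomes the paper's proof.
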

	
	Once the above results has been established, it is straightforward to characterise conditions under which the solution of \eqref{eq.odepert} and its derivative inherit the asymptotic behaviour of those of \eqref{eq.ode}. In that case, under the same hypotheses as above, we prove that the following statements are equivalent:
	\begin{theorem} \label{thm.deriv} 
		Suppose the hypotheses of Theorem~\ref{thm.expertodeiff} hold. Then the following are equivalent:
		\begin{itemize}
			\item[(c)] The functions $f$ and $g$ obey 
			\[
			\lim_{t\to\infty} \frac{g(t)}{f(F^{-1}(t))}=0; 
			\]
			\item[(d)] There is $\lambda\in \{-1,0,1\}$ such that
			\[
			\lim_{t\to\infty} \frac{x(t)}{F^{-1}(t)}= \lambda, 
			\quad
			\lim_{t\to\infty} \frac{x'(t)}{f(F^{-1}(t))}=-\lambda.
			\]
		\end{itemize}
	\end{theorem}
	We notice that solutions of \eqref{eq.ode} with positive initial condition obey (d) with $\lambda=1$, while those with negative initial condition obey (d) 
	with $\lambda=-1$. The condition (c), in the case of positive $g$ and positive initial condition $\xi$, was employed in Appleby and Patterson~\cite{apppatt} to establish condition (a) (with $\lambda=1$). However, condition (a) shows that such a pointwise condition is merely sufficient, rather than necessary, to preserve the asymptotic behaviour of solutions of \eqref{eq.ode}. 
	
	Corresponding results apply to the stochastic equation \eqref{eq.sde}, which we establish in Section 7. We note first that if $\sigma\not\in L^2([0,\infty);\mathbb{R})$, then 
	\[
	\mathbb{P}\left[\limsup_{t\to\infty} \frac{|X(t)|}{F^{-1}(t)} =+\infty  \right]=1.
	\]
	This corresponds to the necessity of the first part of condition (a) to preserve the rate of decay of solutions of \eqref{eq.ode} in the deterministic case. 
	In the case when $\sigma\in L^2(0,\infty)$, we have a sharp characterisation of situations under which the solution of \eqref{eq.sde} inherits the decay rate of solutions of \eqref{eq.ode}. Define, for sufficiently large $t>0$ the function $\Sigma:[T,\infty)\to (0,\infty)$ by  
	\[
	\Sigma^2(t)=2\int_t^\infty \sigma^2(s)\,ds \log\log\left(\frac{1}{\int_t^\infty \sigma^2(s)\,ds}\right), \quad t\geq T,
	\] 
	and we suppose that 
	\[
	\mu:=\lim_{t\to\infty} \frac{\Sigma(t)}{F^{-1}(t)} \in  [0,\infty].
	\]
	Then $\mu\in (0,\infty]$ implies that 
	\[
	\mathbb{P}\left[\lim_{t\to\infty} \frac{X(t)}{F^{-1}(t)} \in {-1,0,1} \right] =0 
	\]
	while $\mu=0$ implies that there exists a $\mathcal{F}^B(\infty)$--measurable random variable $\lambda$ such that $\mathbb{P}[\lambda\in \{-1,0,1\}]=1$ and 
	\[
	\mathbb{P}\left[\lim_{t\to\infty} \frac{X(t)}{F^{-1}(t)}=\lambda\right]=1.
	\]
	In the case when $\mu=+\infty$, we have once again that 
	\[
	\mathbb{P}\left[\limsup_{t\to\infty} \frac{|X(t)|}{F^{-1}(t)} =+\infty  \right]=1,
	\]
	which shows once again that if the noise intensity decays too slowly, then it is impossible for the stochastically perturbed equation to inherit the rate of the rate of decay of the unperturbed deterministic equation. 
	
	Finally, we note that the function $\Sigma$ above is well-- defined for $\sigma\in L^2(0,\infty)$ in the case when 
	$\int_t^\infty \sigma^2(s)\,ds >0$ for all $t$ sufficiently large, but is not well defined in the case that $\int_t^\infty \sigma^2(s)\,ds=0$ for all $t\geq T'$ and some $T'\geq 0$). In this latter case, $\sigma$ is a.e. zero for a certain point on, and the stochastic differential equation essentially collapses to the unperturbed ODE $X'(t)=-f(X(t))$ for $t\geq T'$. In that case, depending on the value of $X(T')$, we have a situation equivalent to the case in which $\mu=0$.   
	
	A result which is less explicit than the above, but parallel to the main result for \eqref{eq.odepert} is the following equivalence:
	\begin{itemize}
		\item[(e)] $\sigma$ and $f$ obey 
		\[
		\sigma\in L^2(0,\infty), \quad \lim_{t\to\infty} \frac{\int_t^\infty \sigma(s)\,dB(s)}{F^{-1}(t)}=0, \quad\text{a.s.};
		\]
		\item[(f)] There exists a $\mathcal{F}^B(\infty)$--measurable random variable $\lambda$ such that $\mathbb{P}[\lambda\in \{-1,0,1\}]=1$ and 
		\[
		\mathbb{P}\left[\lim_{t\to\infty} \frac{X(t)}{F^{-1}(t)}=\lambda\right]=1.
		\]
	\end{itemize}

	These asymptotic results are proven by constructing appropriate upper and lower solutions to the differential equation \eqref{eq.odepert} as in Appleby and Patterson~\cite{apppatt, apppatt2}. In this paper, we use ideas from \cite{apppatt2} to  prove a result in which the solutions of \eqref{eq.odepert} and \eqref{eq.sde} are related to that of an 
	``internally perturbed'' ordinary differential equation of the form $z'(t)=-f(z(t)+\gamma(t))$. The benefit gained from the added difficulty involved in bringing the perturbation inside the argument of the mean--reverting term is that the function $\gamma$ will typically have good pointwise behaviour (obeying for example 
	$\gamma(t)\to 0$ or $\gamma(t)/F^{-1}(t)\to 0$ as $t\to\infty$), while the original forcing functions $g$ or $\sigma$ in \eqref{eq.odepert} and \eqref{eq.sde} 
	may not have nice pointwise bounds. By means of this reformulation of the problem, we are able to determine a very fine characterisation of the desired asymptotic results. 
	
	As hinted at above, the approach of this paper is also very successful for dealing with highly nonlinear equations of the type \eqref{eq.odepert} or \eqref{eq.sde}
	in the case when $F^{-1}$ obeys \eqref{eq.Finvnotpres}; a demonstration of this can be found in the thesis of Al--ansari~\cite{tahanithesis}.
	
	\section{Properties of $f$, $F$ and $F^{-1}$} 
	
	The asymptotic analysis of the differential equation \eqref{eq.odepert} requires a number of preparatory results. 
	
	We first show that the condition \eqref{eq.fasypres} implies $F(x)\to\infty$ as $x\to 0^+$, and that $f(x)=o(x)$ as $x\to 0^+$. This shows that \eqref{eq.fasypres} deals with the case when the unperturbed equation \eqref{eq.ode} has solutions which decay subexponentially.    
	
	\begin{lemma} \label{lemma.Ftoinftyfderiv0}
		If $f\in C((0,\infty);(0,\infty))$ obeys \eqref{eq.fasypresmu}, then $F$ defined by \eqref{def.F} obeys \eqref{eq.Ftoinfty}. Also 
		$f(x)/x\to 0$ as $x\to 0^+$. 
	\end{lemma}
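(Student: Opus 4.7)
The plan is to establish the second claim $f(x)/x \to 0$ first, and then derive the divergence of $F$ as an easy corollary. The whole argument turns on iterating the geometric contraction encoded in \eqref{eq.fasypresmu}, which is the natural way to turn the one-step bound $\bar{\Phi}_f(\mu)<\mu$ into decay at every scale.

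I fix any $\mu \in (0,1)$ and choose $c$ with $\bar{\Phi}_f(\mu) < c < \mu$, so that $c/\mu < 1$. By the definition of $\limsup$ in \eqref{eq.fasypresmu}, there exists $x_0 \in (0,1)$ such that $f(\mu x) \leq c f(x)$ for all $x \in (0,x_0]$. A straightforward induction then gives $f(\mu^n x) \leq c^n f(x)$ for every $n \geq 1$ and every $x \in (0,x_0]$, since at each step $\mu^k x$ still lies in $(0,x_0]$.

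The main obstacle is to promote this bound along the geometric sequence $\mu^n x$ to a uniform bound as $y \to 0^+$. To do so, given $y \in (0, \mu x_0)$, I pick the unique integer $n \geq 1$ such that $y \in [\mu^{n+1} x_0, \mu^n x_0)$, and write $y = \mu^n x$ with $x := y/\mu^n \in [\mu x_0, x_0]$. Then
\[
\frac{f(y)}{y} \;=\; \frac{f(\mu^n x)}{\mu^n x} \;\leq\; \left(\frac{c}{\mu}\right)^n \frac{f(x)}{x}.
\]
Since $f$ is continuous and positive on the compact interval $[\mu x_0, x_0] \subset (0,\infty)$, the ratio $f(x)/x$ is bounded by some finite constant $M$ there. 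As $y \to 0^+$ the integer $n \to \infty$, and because $c/\mu < 1$, the right-hand side tends to $0$. This proves $f(x)/x \to 0$ as $x \to 0^+$.

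To deduce \eqref{eq.Ftoinfty}, fix any $A>0$ and use the limit just established to find $x_1 \in (0,1)$ with $f(u)/u < 1/A$, equivalently $1/f(u) > A/u$, for all $u \in (0,x_1]$. Splitting the defining integral in \eqref{def.F},
\[
F(x) \;=\; \int_x^{x_1} \frac{du}{f(u)} + \int_{x_1}^{1} \frac{du}{f(u)} \;\geq\; A\log\!\frac{x_1}{x} + C,
\]
where $C = \int_{x_1}^{1} du/f(u)$ is finite. Letting $x \to 0^+$ forces $F(x) \to \infty$, which is \eqref{eq.Ftoinfty}, and the lemma is proved.
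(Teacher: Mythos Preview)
Your proof is correct. The argument for $f(x)/x\to 0$ is essentially the same geometric iteration as in the paper (your $c/\mu$ is the paper's $A_\mu$), so there is no substantive difference in that half.

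Where you diverge is in the treatment of $F(x)\to\infty$. The paper runs a second, independent iteration: it shows directly that $\tilde F(\mu x)>\alpha_\mu\tilde F(x)$ with $\alpha_\mu>1$, iterates, and concludes $\tilde F\to\infty$ without ever invoking $f(x)/x\to 0$. You instead reverse the order of the two claims and derive $F\to\infty$ as a corollary of $f(x)/x\to 0$ via the comparison $1/f(u)>A/u$ and the logarithmic divergence of $\int_x^{x_1}du/u$. Your route is more economical: one iteration suffices, and the second claim drops out in two lines. The paper's route has the minor advantage that the two conclusions are established independently, so each could in principle be cited on its own under slightly different hypotheses, but within the present lemma that buys nothing.
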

	\begin{proof}
		For every $\mu\in (0,1)$ and $\epsilon\in (0,1)$ there is an $\tilde{x}_1(\epsilon,\mu)>0$ (we may take $\tilde{x}_1<1$ without loss of generality) such that 
		\[
		f(\mu x)< (\bar{\Phi}_f(\mu)+\epsilon)f(x), \quad x\leq \tilde{x}_1(\epsilon,\mu).
		\]	
		Now define $\epsilon=\epsilon(\mu)=(\mu-\bar{\Phi}_f(\mu))/2$. Then $\epsilon\in (0,1/2)$, and set $x_1(\mu)=\min(\tilde{x}_1(\epsilon(\mu),\mu), \mu)$. Then 
		\begin{equation} \label{eq.fmuepsdel}
			f(\mu x)< (\bar{\Phi}_f(\mu)+\epsilon(\mu))f(x), \quad x\leq x_1(\mu).
		\end{equation}
		Define next 
		\[
		\tilde{F}(x)=\int_x^{x_1(\mu)} \frac{1}{f(u)}\,du, \quad x>0.	
		\]
		Thus for $x\leq x_1(\mu)$, by performing a substitution in the integral, and then splitting the resulting integral, we have 
		\[
		\tilde{F}(\mu x)= \mu\int_{x}^{x_1(\mu)/\mu} \frac{1}{f(\mu v)}\,dv
		=\mu\int_{x}^{x_1(\mu)} \frac{1}{f(v)}\frac{f(v)}{f(\mu v)}\,dv + I(\mu).
		\]
		where $I(\mu)=\int_{\mu x_1(\mu)}^{x_1(\mu)} du/f(u)$. Since $\mu<1$, $I(\mu)>0$; also, using \eqref{eq.fmuepsdel}, we get 
		\[
		\tilde{F}(\mu x)> \frac{\mu}{\bar{\Phi}_f(\mu)+\epsilon(\mu)}\tilde{F}(x) + I(\mu), \quad x\leq x_1(\mu).
		\]  
		Set $\alpha_\mu:=\mu/(\bar{\Phi}_f(\mu)+\epsilon(\mu))=\mu/(\mu-\epsilon(\mu))$. Then $\alpha_\mu>1$. Hence, using the fact that $I(\mu)>0$, we have for all $x\in (0,x_1(\mu)]$ that $\tilde{F}(\mu x)>\alpha_\mu \tilde{F}(x)$. 
		Since $\mu<1$, we can iterate this estimate $n$ times to get 
		\[
		\tilde{F}(\mu^n x)> \alpha_\mu^n \tilde{F}(x), \quad x\in (0,x_1(\mu)].
		\] 
		Now take $x\in [\mu x_1(\mu),x_1(\mu)]=:J(\mu)$. Note that $\underline{F}:=\inf_{x\in J(\mu)} \tilde{F}(x)>0$. Therefore 
		\[
		\inf_{x\in [\mu^{n+1}x_1(\mu),\mu^n x_1(\mu)]} \tilde{F}(x)\geq \alpha_\mu^n \underline{F},
		\]
		and since $\alpha_\mu>1$, the righthand side (and hence the left) tends to infinity as $n\to\infty$. Since $\mu<1$, this ensures that $\tilde{F}(x)\to \infty$ as $x\to 0^+$. Finally, observe that 
		$F(x)=\tilde{F}(x)+F(x_1(\mu))$, so $F(x)\to \infty$ as $x\to 0^+$, proving the claim. 
		
		To show $\delta(x):=f(x)/x\to 0$ as $x\to 0^+$, note that we have defined  for $\mu<1$ the quantity
		$\epsilon(\mu)=(\mu-\bar{\Phi}_f(\mu))/2$. Then \eqref{eq.fmuepsdel} still holds. Divide across this inequality by $\mu x$ to get 
		\begin{equation} \label{eq.delmu}
			\delta(\mu x)=\frac{f(\mu x)}{\mu x} < A_\mu  \frac{f(x)}{x}=A_\mu \delta(x), \quad x\leq x_1(\mu),
		\end{equation}
		where  
		\[
		A_\mu:=\frac{\bar{\Phi}_f(\mu)+\epsilon(\mu)}{\mu}
		=\frac{\bar{\Phi}_f(\mu)+\mu}{2\mu}<1,
		\]
		because $\bar{\Phi}_f(\mu)<\mu$. Since $\mu<1$, we may iterate \eqref{eq.delmu} $n$ times to get 
		\[
		0<\delta(\mu^n x)\leq A_\mu^n \delta(x), \quad x\leq x_1(\mu).
		\]
		Since $\mu<1$, this implies
		\[
		\sup_{x\in [\mu^{n+1}x_1(\mu), \mu^n x_1(\mu) ]} \delta(x)
		\leq A_\mu^n \sup_{x\in [\mu x_1(\mu),x_1(\mu)]}\delta(x),
		\]
		so we have $\sup_{x\in [\mu^{n+1}x_1(\mu), \mu^n x_1(\mu) ]} \delta(x)\to 0$ as $n\to\infty$, which implies $\delta(x)\to 0$ as $x\to 0$, as needed. 
	\end{proof}
	
	Next, in Lemma~\ref{lem.Finvapchar} we show that the asymptotic preserving properties in \eqref{eq.Finvaspliminf} and \eqref{eq.Finvasplimsup} are equivalent to more--readily checked conditions involving $F$ and $f$, thereby bypassing the necessity to obtain precise asymptotic information about the function $F^{-1}$, which is in general difficult to determine in closed--form. 
	
	Once we have done this, the conditions on $F$ and $f$ equivalent to \eqref{eq.Finvaspliminf} and \eqref{eq.Finvasplimsup} will be  shown to result from the asymptotic preserving conditions on $f$ in \eqref{eq.fasypres} and \eqref{eq.fasypresmu}. This is the subject of Lemma~\ref{lem.importfapFinv} and Lemma~\ref{lem.importfmuapFinv}.
	
	We will also see that the conditions \eqref{eq.fasypres} and \eqref{eq.fasypresmu} tend to rule out slower than power--law decay and faster than power--law decay, respectively. 
	
	\begin{lemma} \label{lem.Finvapchar}
		Suppose $f\in C((0,\infty);(0,\infty))$ is an increasing function.
		Suppose $F$ is defined by \eqref{def.F} and obeys \eqref{eq.Ftoinfty}. 
		\begin{itemize}
			\item[(i)] The following are equivalent.
			\begin{enumerate}
				\item[(A)] There exists $L\in (0,\infty)$ such that 
				\[
				\limsup_{x\to 0^+} \frac{F(x)}{x/f(x)}=:L;
				\]
				\item[(B)] $F^{-1}$ obeys \eqref{eq.Finvaspliminf}.
			\end{enumerate}
			Moreover, both imply 
			\begin{equation} \label{eq.PhiL}
				\epsilon L \frac{1}{1+\epsilon(1+L)}\leq 1-\underline{\Phi}_F(\epsilon)\leq \epsilon L. 
			\end{equation}
			\item[(ii)] The following are equivalent.
			\begin{enumerate}
				\item[(A)] There exists $l\in (0,\infty)$ such that 
				\[
				\liminf_{x\to 0^+} \frac{F(x)}{x/f(x)}=:l;
				\]
				\item[(B)] $F^{-1}$ obeys \eqref{eq.Finvasplimsup}.
			\end{enumerate}
			Moreover, both imply 
			\begin{equation} \label{eq.Phil}
				\epsilon l \frac{1}{1+\epsilon(1+l)}\leq 1-\underline{\Phi}_F(\epsilon)\leq \epsilon l. 
			\end{equation}
			\item[(iii)] The following are equivalent
			\begin{enumerate}
				\item[(A)] 
				\[
				\lim_{x\to 0^+} \frac{F(x)}{x/f(x)}=0;
				\]
				\item[(B)] $F^{-1}$ obeys \eqref{eq.Finvhyperpres}. 
			\end{enumerate}
			\item[(iv)] The following are equivalent	
			\begin{enumerate} 
				\item[(A)] 
				\[
				\lim_{x\to 0^+} \frac{F(x)}{x/f(x)}=\infty;
				\]
				\item[(B)] $F^{-1}$ obeys \eqref{eq.Finvnotpres}.  
			\end{enumerate}
		\end{itemize}
	\end{lemma}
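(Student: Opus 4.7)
The plan is to reduce all four equivalences to a single two--sided sandwich on the quantity
\[
\delta(y) := 1 - \frac{F^{-1}((1+\epsilon)F(y))}{y}, \quad y>0 \text{ small},
\]
in terms of $\phi(x) := F(x)f(x)/x$. Fix $\epsilon>0$ and set $y_\epsilon := F^{-1}((1+\epsilon)F(y))$; then $y_\epsilon < y$, both tend to $0$ as $y\to 0^+$, and the identity $F(y_\epsilon)-F(y)=\epsilon F(y)$ reads
\[
\int_{y_\epsilon}^y \frac{du}{f(u)} = \epsilon F(y).
\]
Monotonicity of $f$ pinches the integrand between $1/f(y)$ and $1/f(y_\epsilon)$, so after dividing by $y$,
\[
\epsilon F(y)\,\frac{f(y_\epsilon)}{y} \;\leq\; \delta(y) \;\leq\; \epsilon\,\phi(y).
\]
Using $F(y_\epsilon)=(1+\epsilon)F(y)$ and $y_\epsilon=(1-\delta(y))y$ to eliminate $f(y_\epsilon)$ in favour of $\phi(y_\epsilon)$, the left half rearranges into
\[
\delta(y) \;\geq\; \frac{\epsilon\,\phi(y_\epsilon)}{1+\epsilon\bigl(1+\phi(y_\epsilon)\bigr)}.
\]
Since $y\to 0^+ \Leftrightarrow t=F(y)\to\infty$, we have $\limsup_{y\to 0^+}\delta(y)=1-\underline{\Phi}_F(\epsilon)$ and $\liminf_{y\to 0^+}\delta(y)=1-\overline{\Phi}_F(\epsilon)$.

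Granted the sandwich, each part is a direct extremum calculation. For (i) (A)$\Rightarrow$(B), taking $\limsup_{y\to 0^+}$ in the upper bound gives $1-\underline{\Phi}_F(\epsilon)\leq \epsilon L$. For the matching left half of \eqref{eq.PhiL}, I pick $x_n\to 0^+$ with $\phi(x_n)\to L$ and set $y_n := F^{-1}\bigl(F(x_n)/(1+\epsilon)\bigr)$, so that $y_\epsilon(y_n)=x_n$ by construction; the lower bound then yields $\delta(y_n)\to \epsilon L/(1+\epsilon(1+L))$, completing \eqref{eq.PhiL}. That $\underline{\Phi}_F(\epsilon)\in(0,1)$ for $\epsilon$ small and $\underline{\Phi}_F(\epsilon)\to 1$ as $\epsilon\to 0^+$ follow at once. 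For (B)$\Rightarrow$(A) I argue by contradiction: $\limsup\phi=0$ forces $\delta(y)\to 0$ via the upper bound and hence $\underline{\Phi}_F(\epsilon)=1$, while $\limsup\phi=\infty$ makes the same sequence construction produce $\delta(y_n)\to 1$ and hence $\underline{\Phi}_F(\epsilon)=0$; neither is compatible with (B). Part (ii) is the analogue with ``$\liminf$'' replacing ``$\limsup$'' and $\overline{\Phi}_F$ in place of $\underline{\Phi}_F$; here the lower direction is actually cleaner because $y_\epsilon(y)\to 0$ makes $\liminf_y\phi(y_\epsilon)\geq l$ automatic. Parts (iii) and (iv) are the degenerate cases: $\phi\to 0$ is transported by the upper bound to $\delta\to 0$ (and its converse by the sequence trick), while $\phi\to\infty$ is transported by the lower bound to $\delta\to 1$ (similarly for the converse).

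The only non--routine step is the asymmetry of the sandwich: the lower end reads $\phi$ at $y_\epsilon$, not at $y$. The change of variable $y_n=F^{-1}(F(x_n)/(1+\epsilon))$ is what makes this harmless, as it lets us set $y_\epsilon(y_n)=x_n$ for any prescribed sequence $x_n\to 0^+$ and in particular target points at which $\phi$ attains its extremal asymptotic values. Once this device is in place, extracting the quantitative bounds \eqref{eq.PhiL} and \eqref{eq.Phil} is elementary arithmetic.
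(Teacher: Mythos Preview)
Your approach is essentially the paper's: the paper parameterises by $t$, writes $u=F^{-1}$, and obtains the sandwich by integrating $u'=-f(u)$ over $[t,(1+\epsilon)t]$, while your parameterisation by $y=F^{-1}(t)$ and the identity $\int_{y_\epsilon}^y du/f(u)=\epsilon F(y)$ is the same computation after the obvious change of variable, leading to the same two inequalities and the same bounds \eqref{eq.PhiL}--\eqref{eq.Phil}. Where the paper takes the $\limsup$ directly across the lower inequality (using that $x=u((1+\epsilon)t)$ sweeps a full right neighbourhood of $0$), you instead pick a sequence $y_n$ with $y_\epsilon(y_n)=x_n$ to target extremal values of $\phi$; this is a cosmetic variant. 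One point worth flagging: your conclusions in (iii)--(iv), namely $\phi\to 0\Leftrightarrow F^{-1}((1+\epsilon)t)/F^{-1}(t)\to 1$ and $\phi\to\infty\Leftrightarrow F^{-1}((1+\epsilon)t)/F^{-1}(t)\to 0$, agree exactly with what the paper's proof establishes, but both you and the paper are thereby proving the equivalences with the labels \eqref{eq.Finvhyperpres} and \eqref{eq.Finvnotpres} interchanged relative to the lemma as stated --- this is a typo in the statement, not a flaw in either argument.
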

	\begin{proof} 
		Since $F(x)\to \infty$ as $x\to 0^+$, we have that $F^{-1}(t)\to 0$ as $t\to\infty$. Define $u(t)=F^{-1}(t)$ for $t\geq 0$. Note that $u'(t)=-f(u(t))$ for all $t\geq 0$ with $u(0)=1$. Let $\epsilon>0$, $t>0$ and integrate the differential equation for $u$ over $[t,(1+\epsilon)t]$. Then 
		\[
		0<u(t)-u((1+\epsilon)t) = \int_t^{(1+\epsilon)t} f(u(s))\,ds.
		\]	
		Since $f$ is increasing, and $u$ is decreasing, $f\circ u$ is decreasing, so by estimating the integral by the integrand at the upper and lower endpoints, we get the inequalities 
		\begin{equation}  \label{eq.fuueps}
			\epsilon t f(u((1+\epsilon)t))\leq u(t)-u((1+\epsilon)t) \leq \epsilon t f(u(t)), \quad t\geq 0.
		\end{equation}
		Take the right inequality in \eqref{eq.fuueps}, and define $\lambda_\epsilon(t)=u((1+\epsilon)t)/u(t)$. Then 
		\begin{equation} \label{eq.lam1}
			1-\lambda_\epsilon(t)\leq \epsilon \frac{tf(u(t))}{u(t)}.
		\end{equation}
		Now take the limsup across this inequality; on the left, we get 
		$1-\underline{\Phi}_F(\epsilon)$. On the right, with $L$ defined as above, we get $\epsilon L$. This follows by making the change of variables $x=u(t)=F^{-1}(t)$ (so $t=F(x)$), and noting that $u(t)\to 0$ as $t\to\infty$. Hence 
		\begin{equation} \label{eq.PhiL1}
			1-\underline{\Phi}_F(\epsilon)\leq \epsilon L.
		\end{equation}
		
		Similarly, take the left inequality in \eqref{eq.fuueps}, divide by $u((1+\epsilon)t)$, and rearrange to get  
		\begin{equation} \label{eq.lam2}
			\frac{1}{\lambda_\epsilon(t)}\geq 	\epsilon \frac{t f(u((1+\epsilon)t))}{u((1+\epsilon)t)}+1.
		\end{equation}
		Next, take the limsup on both sides; on the left, we get $1/\underline{\Phi}_F(\epsilon)$. On the right, we get $\epsilon L/(1+\epsilon) +1$; this follows by making by change of variables $x=u((1+\epsilon)t)=F^{-1}((1+\epsilon)t)$, which gives $F(x)/(1+\epsilon)=t$. Thus 
		\begin{equation} \label{eq.PhiL2}
			\frac{1}{\underline{\Phi}_F(\epsilon)} \geq 1+\frac{\epsilon}{1+\epsilon} L.
		\end{equation}
		Rearrange this inequality to get 
		\[
		1-\underline{\Phi}_F(\epsilon) \geq \epsilon\frac{L}{1+\epsilon(1+ L)},
		\]
		and notice that this, together with \eqref{eq.PhiL1}, yield the inequality \eqref{eq.PhiL}.
		
		Now, we prove part (i). Let (B) hold, so $\underline{\Phi}_F(\epsilon)\in (0,1)$. Then, as $\underline{\Phi}_F(\epsilon)<1$ by \eqref{eq.PhiL1}, we have $L>0$. Next rearrange \eqref{eq.PhiL2} to get  
		$$ 
		L\leq
		\frac{1+\epsilon}{\epsilon}\left( \frac{1}{\underline{\Phi}_F(\epsilon)} -1\right).
		$$
		Since $\underline{\Phi}_F(\epsilon)\in (0,1)$, the righthand side is finite and positive; hence we must have that $L<+\infty$. Thus (B) implies $L\in (0,\infty)$, which is (A). 
		
		On the other hand, if (A) holds, we have $L\in (0,\infty)$. By \eqref{eq.PhiL2}, since $L>0$, we have $	1/\underline{\Phi}_F(\epsilon)>1$, so $\underline{\Phi}_F(\epsilon)<1$. Next, if $L<\infty$, for all $\epsilon>0$ sufficiently small, we have $1-\epsilon L>0$. Rearrange \eqref{eq.PhiL1} to get $\underline{\Phi}_F(\epsilon)\geq 1-\epsilon L>0$. Hence (B) holds, for all $\epsilon\in (0,1/L)$. Since we have already shown, irrespective of the values on the extended real line taken by various limits, that the double inequality \eqref{eq.PhiL} holds, the proof of part (i) is complete. 
		
		The proof of part (ii) is essentially identical to that of (i). First, we note that the inequalities \eqref{eq.lam1} and \eqref{eq.lam2} are still valid. Now, however, we take the liminf across both of these inequalities, rather than the limsup, as in part (i). This generates inequalities identical to  \eqref{eq.PhiL1} and \eqref{eq.PhiL2}, with $L$ replaced by $l$, and $\underline{\Phi}_F(\epsilon)$ replaced by $\overline{\Phi}_F(\epsilon)$. From these inequalities, which are equivalent to the double inequality \eqref{eq.Phil}, we may show that (A) and (B) are equivalent, as in the proof of (i) above.
		
		The proof of part (iii) follows verbatim the proof of part (i). If we assume (A), since $F(x)/(x/f(x))>0$ for $x>0$ sufficiently small, this is the same as having $L=0$ in part (i). We can deduce \eqref{eq.PhiL1} as before, and with $L=0$ it now reads  $1-\underline{\Phi}_F(\epsilon)\leq 0$, while proceeding similarly with \eqref{eq.PhiL2} yields $1/\underline{\Phi}_F(\epsilon)\geq 1$. From these inequalities, we get $\underline{\Phi}_F(\epsilon)=1$, or 
		\[
		\liminf_{t\to\infty} \frac{F^{-1}((1+\epsilon)t)}{F^{-1}(t)}=1.
		\]
		Since $\epsilon>0$ and $F^{-1}$ is decreasing, we have 
		\[
		\limsup_{t\to\infty} \frac{F^{-1}((1+\epsilon)t)}{F^{-1}(t)}\leq 1,
		\]
		and hence combining the limits, we get (B). Conversely (B) implies we have that $\liminf_{t\to\infty}  F^{-1}((1+\epsilon)t)/F^{-1}(t)=1$, so we have $\underline{\Phi}_F(\epsilon)=1$. Then \eqref{eq.PhiL2} yields $1\leq 1+\epsilon L$, or $L\leq 0$, while we must have $L\geq 0$ by the positivity of $F$ and $f$. Hence $L=0$, which yields (A).
		
		To prove part (iv), suppose first (A) holds.
		The inequality \eqref{eq.lam2} is valid. Taking the liminf as $t\to\infty$ on both sides gives 
		\[
		\liminf_{t\to\infty}\frac{1}{\lambda_\epsilon(t)}=+\infty. 
		\]
		Thus 
		\[
		\lim_{t\to\infty} \frac{F^{-1}(t)}{F^{-1}((1+\epsilon)t)}=+\infty, 
		\]
		which is precisely (B). 
		
		Conversely, suppose (B) holds. We still have \eqref{eq.lam1} i.e., 
		\[
		1-\lambda_\epsilon(t)\leq \epsilon t \frac{f(u(t))}{u(t)},
		\]
		and (B) implies $\lambda_\epsilon(t)\to 0$ as $t\to\infty$ for every $\epsilon>0$. Hence 
		\[
		\liminf_{t\to\infty} \epsilon t \frac{f(u(t))}{u(t)}\geq 
		\liminf_{t\to\infty} \{	1-\lambda_\epsilon(t)\}=1-\limsup_{t\to\infty} \lambda_\epsilon(t)=1.
		\]
		Hence 
		\[
		\liminf_{t\to\infty} t \frac{f(u(t))}{u(t)} \geq \frac{1}{\epsilon}.
		\]
		Since $\epsilon>0$ is arbitrary, we may take $\epsilon\to 0^+$, yielding 
		\[
		\lim_{t\to\infty} t \frac{f(u(t))}{u(t)} =+\infty,
		\]
		which yields (A), because $0<u(t)\to 0$ as $t\to 0^+$.
	\end{proof}	
	
	\begin{lemma} \label{lem.importfapFinv}
		Suppose $f\in C((0,\infty);(0,\infty))$ is an increasing function and that \eqref{eq.fasypresmu} holds. Then 
		\[
		\limsup_{t\to\infty} \frac{F(x)}{x/f(x)}=:L<\infty.
		\]
	\end{lemma}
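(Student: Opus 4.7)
The goal is to show that $F(x)/(x/f(x))$ stays bounded above as $x\to 0^+$. The plan is to fix $\mu\in(0,1)$, use the multiplicative decay estimate already extracted in the proof of Lemma~\ref{lemma.Ftoinftyfderiv0}, namely
\[
f(\mu v)<\beta_\mu f(v), \quad v\leq x_1(\mu),\qquad \beta_\mu:=\tfrac12(\bar\Phi_f(\mu)+\mu)<\mu,
\]
and decompose the interval $[x,x_1(\mu)]$ dyadically at the scales $\xi_k:=\mu^k x_1(\mu)$. The crucial inequality is $\beta_\mu/\mu<1$, which will make the resulting geometric sum converge and produce the required constant.

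More concretely, for $x\leq x_1(\mu)$, choose the integer $n\geq 0$ with $\xi_{n+1}<x\leq \xi_n$ and split
\[
\int_x^{x_1(\mu)}\frac{du}{f(u)}=\int_x^{\xi_n}\frac{du}{f(u)}+\sum_{k=0}^{n-1}\int_{\xi_{k+1}}^{\xi_k}\frac{du}{f(u)}.
\]
Since $f$ is increasing, the first piece is at most $(\xi_n-x)/f(x)\leq \xi_n(1-\mu)/f(x)$; using $x>\mu\xi_n$ gives $\xi_n<x/\mu$, so this is at most $\tfrac{1-\mu}{\mu}\cdot x/f(x)$. For each term in the sum, monotonicity of $f$ gives
\[
\int_{\xi_{k+1}}^{\xi_k}\frac{du}{f(u)}\leq \frac{\xi_k(1-\mu)}{f(\xi_{k+1})}.
\]
Iterating the multiplicative estimate $n-k-1$ times yields $f(\xi_n)<\beta_\mu^{n-k-1}f(\xi_{k+1})$, i.e.\ $1/f(\xi_{k+1})<\beta_\mu^{n-k-1}/f(\xi_n)$, so each term is dominated by $\mu^k x_1(\mu)(1-\mu)\beta_\mu^{n-k-1}/f(\xi_n)$.

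Summing over $k=0,\ldots,n-1$ produces a geometric series with ratio $\mu/\beta_\mu>1$, whose total equals $(\mu^n-\beta_\mu^n)/(\mu-\beta_\mu)<\mu^n/(\mu-\beta_\mu)$; after the $x_1(\mu)$ factor this yields the bound $(1-\mu)\xi_n/((\mu-\beta_\mu)f(\xi_n))$. Using $\xi_n\leq x/\mu$ and $f(\xi_n)\geq f(x)$ gives $\xi_n/f(\xi_n)\leq(1/\mu)\cdot x/f(x)$, and so the sum piece is at most $(1-\mu)/(\mu(\mu-\beta_\mu))\cdot x/f(x)$. Combining the two pieces,
\[
F(x)=F(x_1(\mu))+\int_x^{x_1(\mu)}\frac{du}{f(u)}\leq F(x_1(\mu))+K(\mu)\cdot\frac{x}{f(x)},
\]
with $K(\mu)$ a finite constant depending only on $\mu$. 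Dividing by $x/f(x)$ and using Lemma~\ref{lemma.Ftoinftyfderiv0}, which yields $x/f(x)\to\infty$ as $x\to 0^+$, makes the $F(x_1(\mu))$ term vanish in the limsup and gives $L\leq K(\mu)<\infty$. The main obstacle is the bookkeeping in the summation step: one must separate the sub-interval containing $x$ from the geometric tail, because using monotonicity of $f$ naively on that last piece produces an estimate in the wrong direction; the trick $\xi_n\leq x/\mu$ together with comparison of every term down to $f(\xi_n)$ (rather than $f(\xi_{k+1})$) is what allows the geometric sum $(\beta_\mu/\mu)^{\bullet}$ to be collapsed into $x/f(x)$.
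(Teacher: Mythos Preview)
Your proof is correct and rests on the same two ingredients as the paper's: the estimate $f(\mu v)<\beta_\mu f(v)$ with $\beta_\mu<\mu$ extracted from \eqref{eq.fasypresmu}, and a dyadic decomposition at the scales $\mu^k$ which produces a convergent geometric series with ratio $\beta_\mu/\mu<1$.

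The organisation, however, differs. The paper introduces $\rho(x):=F(x)f(x)/x$ and derives the one--step recursion
\[
\rho(\mu x)\leq \frac{1-\mu}{\mu}+\alpha_\mu\,\rho(x),\qquad \alpha_\mu:=\frac{\beta_\mu}{\mu}<1,
\]
which it then iterates and bounds uniformly for $x$ ranging over the compact shell $[\mu x_1,x_1]$; this yields the bound on $L$ without ever invoking $x/f(x)\to\infty$. You instead unroll that recursion explicitly: you decompose the integral $\int_x^{x_1}du/f(u)$ into the dyadic blocks $[\xi_{k+1},\xi_k]$, bound each block, and sum. Your route is perhaps more transparent about where the constant $K(\mu)$ comes from, but it requires the extra appeal to Lemma~\ref{lemma.Ftoinftyfderiv0} to dispose of the additive constant $F(x_1(\mu))$, whereas the paper's recursion on $\rho$ absorbs this automatically. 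Both arguments are short and neither is more general than the other; they are really two bookkeepings of the same estimate.
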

	\begin{proof}
		Fix $\mu\in (0,1)$. Then, by \eqref{eq.fasypresmu}, for every $\eta>0$, there is $\tilde{x}_1(\eta,\mu)<1$ such that 
		\begin{equation} \label{eq.fmuupper}
			f(\mu x)<(\overline{\Phi}_f(\mu)+\eta)f(x), \quad x<\tilde{x}_1(\eta,\mu).
		\end{equation}
		For $x<1$ and $u\in [\mu x,x]$, we have $1/f(u)\leq 1/f(\mu x)$.
		Then 
		\[
		F(\mu x)=F(x)+\int_{\mu x}^x\frac{1}{f(u)}\,du\leq F(x)+\frac{x(1-\mu)}{f(\mu x)}.
		\]	
		Now, define $\rho(x)=F(x)/(x/f(x))$. 
		\[
		\rho(\mu x)=\frac{F(\mu x) f(\mu x)}{\mu x}
		\leq \frac{\{F(x)+x(1-\mu)/f(\mu x)\}  f(\mu x)}{\mu x}
		=\frac{F(x)f(\mu x)}{\mu x}+\frac{1-\mu}{\mu}.
		\]
		Now apply \eqref{eq.fmuupper} to the first term, and using the definition of $\rho$, we get 
		\[
		\rho(\mu x)\leq \frac{1-\mu}{\mu}+\frac{(\overline{\Phi}_f(\mu)+\eta)}{\mu}\rho(x), \quad x<\tilde{x}_1(\eta,\mu).
		\]
		Now take $\tilde{\alpha}(\mu)=\overline{\Phi}_f(\mu)/\mu<1$,  $\eta(\mu)=\mu(1-\tilde{\alpha}(\mu))/2$. Then 
		\[
		\frac{\overline{\Phi}_f(\mu)+\eta(\mu)}{\mu}=\frac{\tilde{\alpha}(\mu)+1}{2}
		=:\alpha_\mu,
		\]
		and $\alpha_\mu\in (0,1)$. Then with $x_1(\mu):=\tilde{x}_1(\eta(\mu),\mu)<1$ we 
		\begin{equation} \label{eq.rhomuxrhox}
			\rho(\mu x)\leq \frac{1-\mu}{\mu}+\alpha_\mu\rho(x), \quad x<x_1(\mu).
		\end{equation}
		Now, by induction it is straightforward to prove for every positive integer $n$ that 
		\[
		\rho(\mu^n x)\leq \frac{1-\mu}{\mu}(1+\alpha_\mu+\alpha_\mu^2+\ldots\alpha_\mu^{n-1})+\alpha_\mu^n \rho(x), \quad x<x_1(\mu).
		\]
		Since $\alpha_\mu<1$, we have the following $n$--independent bound on the right--hand side. Hence 
		\[
		\rho(\mu^n x)\leq \frac{1-\mu}{\mu}\frac{1}{1-\alpha_\mu}+\alpha_\mu \rho(x).
		\]
		Now, take the supremum for $x\in [\mu x_1(\mu),x_1(\mu)]$, so 
		\begin{align*}
			\sup_{y\in [\mu^{n+1}x_1(\mu),\mu^n x_1(\mu)]} \rho(y)
			&=\sup_{x\in [\mu x_1(\mu),x_1(\mu)]} \rho(\mu^n x)\\
			&\leq \frac{1-\mu}{\mu(1-\alpha_\mu)}+\alpha_\mu \sup_{x\in [\mu x_1(\mu),x_1(\mu)]} \rho(x) =: B(\mu)<+\infty,
		\end{align*}
		and from this we see that $L=\limsup_{y\to 0^+} \rho(y)\leq B(\mu)<+\infty$, as required.
	\end{proof}

	\begin{lemma}  \label{lem.importfmuapFinv}
		Suppose $f\in C((0,\infty);(0,\infty))$ is an increasing function and that \eqref{eq.fasypres} holds. Then 
		\[
		\limsup_{t\to\infty} \frac{F(x)}{x/f(x)}=:l>0. 
		\]
	\end{lemma}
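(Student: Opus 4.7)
The plan is to mirror the scheme used in Lemma~\ref{lem.importfapFinv}, replacing the upper asymptotic bound on $f(\mu x)/f(x)$ supplied there by \eqref{eq.fasypresmu} with the \emph{lower} bound on the same ratio supplied here by \eqref{eq.fasypres}, and correspondingly deriving a lower (rather than upper) iterative inequality for $\rho(x) := F(x) f(x)/x$. Since $\rho(x)$ has liminf at $0^+$ equal to the quantity $l$ in part (ii) of Lemma~\ref{lem.Finvapchar}, it suffices to show $\liminf_{x\to 0^+} \rho(x) > 0$.

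First I would fix $\epsilon \in (0,\epsilon_0)$ and set $\mu := 1-\epsilon \in (0,1)$. Since $\underline{\Phi}_f(\epsilon) > 0$ by \eqref{eq.fasypres}, taking $\eta := \underline{\Phi}_f(\epsilon)/2$ in the definition of the liminf yields $x_1(\mu) \in (0,1)$ such that
\[
f(\mu x) \geq \tfrac{1}{2}\underline{\Phi}_f(\epsilon)\, f(x), \quad x \leq x_1(\mu).
\]
Since $f$ is increasing, $f(u) \leq f(x)$ for $u \in [\mu x, x]$, which gives the matching lower bound on $F$,
\[
F(\mu x) = F(x) + \int_{\mu x}^{x}\frac{du}{f(u)} \geq F(x) + \frac{(1-\mu)x}{f(x)}.
\]
Multiplying by $f(\mu x)/(\mu x)$ and using the lower bound on $f(\mu x)$, one obtains, for $x \leq x_1(\mu)$,
\[
\rho(\mu x) \geq \alpha_\mu\, \rho(x) + \beta_\mu,
\]
where $\alpha_\mu := \underline{\Phi}_f(\epsilon)/(2\mu) > 0$ and $\beta_\mu := \underline{\Phi}_f(\epsilon)(1-\mu)/(2\mu) > 0$.

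A routine induction on $n$ then yields
\[
\rho(\mu^n x) \geq \beta_\mu \sum_{k=0}^{n-1} \alpha_\mu^k + \alpha_\mu^n\, \rho(x) \geq \beta_\mu, \quad n \geq 1,\ x \leq x_1(\mu),
\]
where the final inequality uses $\sum_{k=0}^{n-1} \alpha_\mu^k \geq 1$ (valid for any $\alpha_\mu > 0$ and $n \geq 1$) together with $\rho \geq 0$. Taking the infimum over $x \in [\mu x_1(\mu), x_1(\mu)]$, and noting that the intervals $[\mu^{n+1} x_1(\mu), \mu^n x_1(\mu)]$ exhaust $(0,\mu x_1(\mu)]$ as $n$ runs over the positive integers, one concludes
\[
\inf_{0 < y \leq \mu x_1(\mu)} \rho(y) \geq \beta_\mu > 0,
\]
and hence $\liminf_{y \to 0^+} \rho(y) \geq \beta_\mu > 0$, as required.

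I anticipate no substantial obstacle, since the argument is essentially a direction-reversed twin of the preceding lemma. The only point demanding some care is verifying that the geometric-sum bound $\sum_{k=0}^{n-1}\alpha_\mu^k \geq 1$ is uniform in the size of $\alpha_\mu$: if $\alpha_\mu < 1$ the sum equals $(1-\alpha_\mu^n)/(1-\alpha_\mu) \geq 1$, while if $\alpha_\mu \geq 1$ it is at least $n \geq 1$. Either way, the same floor $\beta_\mu$ on $\rho$ propagates to the full tail $(0,\mu x_1(\mu)]$, which is precisely what the liminf requires.
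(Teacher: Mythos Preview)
Your proof is correct. It differs from the paper's in one notable respect: you mirror the iterative scheme of Lemma~\ref{lem.importfapFinv}, retaining the $F(x)$ term in the lower bound for $F(\mu x)$ and then iterating the resulting affine inequality $\rho(\mu x) \geq \alpha_\mu \rho(x) + \beta_\mu$ to propagate the floor $\beta_\mu$ down to $0^+$. The paper instead observes that for a \emph{lower} bound no iteration is needed: one may simply discard the (positive) $F(x)$ contribution at the outset, obtaining directly
\[
\rho(\mu x) \geq \frac{f(\mu x)}{\mu x}\int_{\mu x}^{x}\frac{du}{f(u)} \geq \frac{1-\mu}{\mu}\cdot\frac{f(\mu x)}{f(x)},
\]
and then taking $\liminf_{x\to 0^+}$ on both sides gives $l \geq \frac{1-\mu}{\mu}\,\underline{\Phi}_f(1-\mu) > 0$ in one stroke. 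Your approach buys a clean structural parallelism with the preceding lemma; the paper's is shorter and yields a slightly sharper constant (no factor of $\tfrac{1}{2}$), but both reach the same conclusion.
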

	\begin{proof}
		Define $\rho(x)=F(x)/(x/f(x))$ for $x\in (0,1)$. Let $\mu\in (0,1)$. We wish to show that $\liminf_{x\to 0^+} \rho(x)=l>0$. 
		We have 
		\[
		F(\mu x)=F(x)+\int_{\mu x}^x \frac{1}{f(u)}\,du\geq \int_{\mu x}^x \frac{1}{f(u)}\,du. 
		\]
		Thus, using this estimate, the definition of $\rho$, and the monotonicity of $f$, we get  
		\[
		\rho(\mu x) \geq \frac{1}{\mu x}\int_{x\mu}^x \frac{f(\mu x)}{f(u)}\,du\geq  \frac{1}{\mu x}\int_{x\mu}^x \frac{f(\mu x)}{f(x)}\,du = \frac{1-\mu}{\mu}\frac{f(\mu x)}{f(x)}.
		\]
		Since $\mu\in (0,1)$, by taking the liminf in the last estimate, we have, for all $\mu>0$ sufficiently small the estimate 
		\[
		l:=\liminf_{y\to 0^+} \rho(y)=\liminf_{x\to 0^+} \rho(\mu x)
		\geq \frac{1-\mu}{\mu}\underline{\Phi}_f(1-\mu)>0,
		\]
		with the existence of the limit on the right, and the positivity of that limit being a consequence of \eqref{eq.fasypres}. Thus $l>0$. 
	\end{proof}
	
	We next show that the asymptotic preserving property of $f$ is inherited through integration by $F$. 
	
	\begin{lemma} \label{lem.Fap}
		If $f$ obeys \eqref{eq.fasypres}, and $F$ obeys \eqref{eq.Ftoinfty}, then $F$ obeys
		\begin{equation} \label{eq.Fasypres}
			\liminf_{x\to 0^+} \frac{F((1+\epsilon)x)}{F(x)}=:\underline{\Psi}_F(\epsilon)\to 1, \quad \epsilon\to 0^+. 
		\end{equation}
	\end{lemma}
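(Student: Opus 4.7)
The plan is to express the difference $F(x) - F((1+\epsilon)x)$ as a small integral, bound it above using that $f$ is increasing, and then invoke Lemma~\ref{lem.importfmuapFinv} to control the resulting quotient. The asymptotic preserving property \eqref{eq.fasypres} enters only through that lemma; the rest is a direct monotonicity computation.

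Concretely, for $x$ small enough that $(1+\epsilon)x < 1$, the definition \eqref{def.F} gives
\[
F(x) - F((1+\epsilon)x) = \int_x^{(1+\epsilon)x} \frac{du}{f(u)} \le \frac{\epsilon x}{f(x)},
\]
where the inequality uses that $f$ is increasing on $(0,\infty)$. Dividing by $F(x)$ (which is positive for small $x$ since $F(x)\to\infty$) yields
\[
\frac{F((1+\epsilon)x)}{F(x)} \ge 1 - \frac{\epsilon}{\rho(x)}, \qquad \rho(x) := \frac{F(x)}{x/f(x)}.
\]
This is the key estimate; everything else is bookkeeping.

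Next I would apply Lemma~\ref{lem.importfmuapFinv}, which under hypothesis \eqref{eq.fasypres} gives $\liminf_{x\to 0^+}\rho(x) = l > 0$. Therefore $\limsup_{x\to 0^+} 1/\rho(x) \le 1/l$ (interpreted as $0$ if $l=+\infty$), and taking the liminf on both sides of the displayed inequality produces
\[
\underline{\Psi}_F(\epsilon) \ge 1 - \frac{\epsilon}{l}.
\]
On the other hand, since $F$ is strictly decreasing and positive near $0$, we have $F((1+\epsilon)x)/F(x) < 1$ for all small $x$, so $\underline{\Psi}_F(\epsilon) \le 1$. Combining the two bounds and letting $\epsilon \to 0^+$ yields $\underline{\Psi}_F(\epsilon)\to 1$, which is the claim.

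There is no real obstacle here: the substantive content is packed into Lemma~\ref{lem.importfmuapFinv}, after which the present lemma is a two-line application of the bound $\int_x^{(1+\epsilon)x} du/f(u) \le \epsilon x/f(x)$. The only mild care required is the handling of the case $l = +\infty$, but there the lower bound is automatic.
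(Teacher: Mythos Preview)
Your proof is correct, and it takes a genuinely different route from the paper's. The paper argues directly from \eqref{eq.fasypres}: it makes the substitution $u=(1+\epsilon)v$ in the integral defining $F((1+\epsilon)x)$, applies the pointwise lower bound $f(v)/f((1+\epsilon)v) \ge \psi(\epsilon)$ (coming from \eqref{eq.fasypres} with $1-\epsilon'=1/(1+\epsilon)$), and obtains $\underline{\Psi}_F(\epsilon)\ge \underline{\Phi}_f(\epsilon/(1+\epsilon))$, which tends to $1$. By contrast, you bound the \emph{difference} $F(x)-F((1+\epsilon)x)$ using only monotonicity of $f$, reducing the problem to showing $\liminf_{x\to 0^+}\rho(x)>0$, and then outsource that to Lemma~\ref{lem.importfmuapFinv}. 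Your argument is shorter and more modular, and yields the explicit linear lower bound $\underline{\Psi}_F(\epsilon)\ge 1-\epsilon/l$; the paper's approach is self-contained (it does not rely on Lemma~\ref{lem.importfmuapFinv}) and produces the bound $\underline{\Psi}_F(\epsilon)\ge \underline{\Phi}_f(\epsilon/(1+\epsilon))$, which ties the preservation constant of $F$ directly to that of $f$.
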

	\begin{proof}
		Since $F$ is decreasing, we have that $F((1+\epsilon)x)<F(x)$, so $\overline{\Psi}_F(\epsilon):=\limsup_{x\to 0^+} F((1+\epsilon)x)/F(x)\leq 1$. Hence, the above limit \eqref{eq.Fasypres} suffices to show that $F$ is asymptotic preserving. 
		
		Since  $f$ obeys \eqref{eq.fasypres}, we have that there exists $\tilde{x}_1(\epsilon')<1$ such that $f((1-\epsilon')x)/f(x)>\underline{\Phi}_f(\epsilon')(1-\epsilon')$ for all $\epsilon'\in (0,1)$ sufficiently small. Now, put $1/(1+\epsilon)=1-\epsilon'$. Then $\epsilon'=\epsilon/(1+\epsilon)$ and we have that there is an $x_1(\epsilon)= \tilde{x}_1(\epsilon/(1+\epsilon))$ such that 
		\[
		\frac{f((1+\epsilon)^{-1}x)}{f(x)}>\underline{\Phi}_f(\epsilon/(1+\epsilon))\frac{1}{1+\epsilon}, \quad x<x_1(\epsilon)
		\]	
		Thus, for $x<x_1(\epsilon)/(1+\epsilon)=:x_2(\epsilon)$ we have 
		\[
		\frac{f(x)}{f((1+\epsilon)x)}>\Phi_f(\epsilon/(1+\epsilon))\frac{1}{1+\epsilon}=:\psi(\epsilon).
		\]
		Note that $x_2(\epsilon)<1/(1+\epsilon)$, so for $x<x_2(\epsilon)$, by making a substitution, we have 
		\[
		F((1+\epsilon)x) = \int_{x}^{1/(1+\epsilon)} \frac{1+\epsilon}{f((1+\epsilon)v)}\,dv
		=(1+\epsilon)\int_x^{x_2(\epsilon)} \frac{1}{f((1+\epsilon)v)}\,dv+ I_1(\epsilon),
		\]
		where 
		\[
		I_1(\epsilon):=(1+\epsilon)\int_{x_2(\epsilon)}^{1/(1+\epsilon)} \frac{1}{f((1+\epsilon)v)}\,dv.
		\]
		Since $x<x_2(\epsilon)$, we have 
		\[
		F((1+\epsilon)x) >(1+\epsilon)\psi(\epsilon)\int_x^{x_2(\epsilon)} \frac{1}{f(v)}\,dv+ I_1(\epsilon)
		= (1+\epsilon)\psi(\epsilon)F(x)+I_2(\epsilon)+I_1(\epsilon),
		\]
		where 
		\[
		I_2(\epsilon)=(1+\epsilon)\psi(\epsilon)\int_1^{x_2(\epsilon)} \frac{1}{f(v)}\,dv.
		\]
		Since $F(x)\to \infty$ as $x\to 0$, this yields
		\[
		\liminf_{x\to 0^+}\frac{F((1+\epsilon)x)}{F(x)}\geq \underline{\Phi}_f(\epsilon/(1+\epsilon)),
		\]
		and since the righthand side tends to 1 as $\epsilon\to 0^+$, the result is proven.
	\end{proof}
	
	One implication of that result is that any function $f$ for which 
	$F(x)f(x)/x\to 0$ as $x\to 0^+$ cannot be asymptotic preserving. 
	
	\begin{remark}
		If $f$ is increasing and $F(x)f(x)/x\to 0$ as $x\to 0^+$, it does not obey \eqref{eq.fasypres}.
	\end{remark}
	\begin{proof}
		Suppose that $f$ is asymptotic preserving. Then by Lemma~\ref{lem.Fap}, we have that 
		\begin{equation} \label{eq.limis1-eps}
			\liminf_{x\to 0^+} \frac{F((1+\epsilon)x)}{F(x)}=\underline{\Psi}_F(\epsilon)\to 1, \quad \epsilon\to 0^+.
		\end{equation}
		On the other hand, by part (iii) of Lemma~\ref{lem.Finvapchar}, we have that $F(x)f(x)/x\to 0$ as $x\to 0^+$ implies 
		\[
		\lim_{t\to\infty} \frac{F^{-1}(\lambda t)}{F^{-1}(t)}=1
		\]
		for all $\lambda>1$. Thus for every $\eta\in (0,1)$ there is a $T(\eta,\lambda)$ such that 
		\[
		(1-\eta)F^{-1}(t)<F^{-1}(\lambda t)< F^{-1}(t), \quad t>T(\eta,\lambda).
		\] 
		Put $x(\eta,\lambda)=F(T(\eta,\lambda))$. If $x=F^{-1}(t)$, we have
		\[
		(1-\eta) x < F^{-1}(\lambda F(x))< x, \quad x<x(\eta,\lambda).
		\] 
		Thus $F((1-\eta)x)>\lambda F(x)$ for all $ x<x(\eta,\lambda)$. Therefore for every $\lambda>1$ and $\eta\in (0,1)$ we have 
		\[
		\liminf_{x\to 0} \frac{F((1-\eta)x)}{F(x)}\geq \lambda.
		\]
		Since $\lambda>1$ is arbitrary, we may let $\lambda\to \infty$ to get 
		\[
		\liminf_{x\to 0} \frac{F((1-\eta)x)}{F(x)}=+\infty.
		\]
		Put $\epsilon=1/(1-\eta)-1>0$. Then $F(x/(1+\epsilon))/F(x)\to\infty$ as $x\to 0^+$, or  $F((1+\epsilon)x)/F(x)\to 0$ as $x\to 0$. But this is inconsistent with \eqref{eq.limis1-eps}, which generates the desired contradiction.
	\end{proof}	
	
	Finally, the function $\overline{\Phi}_f$ in \eqref{eq.fasypresmu} is not merely $O(\mu)$ but $o(\mu)$ as $\mu\to 0^+$.
	
	\begin{lemma} \label{lemma.philogest}
		Suppose $f\in C((0,\infty);(0,\infty))$ is increasing and $f$ obeys \eqref{eq.fasypresmu}. Then there exists $C'>0$ and $\mu_1<1$ such that for all $\mu<\mu_1$ we have $\overline{\Phi}_f(\mu)\leq C'\mu/\log(1/\mu)$. 
	\end{lemma}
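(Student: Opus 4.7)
My plan is to exploit a sub-multiplicativity of $\overline{\Phi}_f$ and then iterate from a single fixed reference value. In fact, the approach will yield a bound $\overline{\Phi}_f(\mu)\le C\mu^q$ for some $q>1$, which is strictly stronger than the claim and therefore implies it trivially.

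First I would establish that, for all $\mu_1,\mu_2\in(0,1)$,
\[
\overline{\Phi}_f(\mu_1\mu_2)\le \overline{\Phi}_f(\mu_1)\,\overline{\Phi}_f(\mu_2).
\]
This follows from the factorisation $f(\mu_1\mu_2 x)/f(x)=[f(\mu_1\mu_2 x)/f(\mu_2 x)]\cdot[f(\mu_2 x)/f(x)]$, together with the standard inequality $\limsup(gh)\le(\limsup g)(\limsup h)$ for non-negative bounded functions (both factors lie in $(0,1]$ because $f$ is increasing), and the substitution $y=\mu_2 x$ (with $y\to 0^+$ as $x\to 0^+$) which identifies the limsup of the first factor with $\overline{\Phi}_f(\mu_1)$.

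Next I would fix any reference value $\mu_0\in(0,1)$, say $\mu_0=1/2$. Hypothesis \eqref{eq.fasypresmu} at $\mu_0$ yields $\alpha:=\overline{\Phi}_f(\mu_0)/\mu_0\in(0,1)$, and induction on the sub-multiplicativity gives $\overline{\Phi}_f(\mu_0^n)\le(\alpha\mu_0)^n$ for every integer $n\ge 1$. For small $\mu$, write $\mu=\mu_0^n\tau$ with $n=\lfloor\log(1/\mu)/\log(1/\mu_0)\rfloor$ and $\tau\in(\mu_0,1]$. Since $f$ is increasing, $\overline{\Phi}_f(\tau)\le 1$, so another application of sub-multiplicativity yields $\overline{\Phi}_f(\mu)\le(\alpha\mu_0)^n$. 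Using $n\ge\log(1/\mu)/\log(1/\mu_0)-1$ and $\alpha\mu_0<1$, a direct computation converts this to
\[
\overline{\Phi}_f(\mu)\le\frac{1}{\alpha\mu_0}\,\mu^{q},\qquad q:=1+\frac{\log(1/\alpha)}{\log(1/\mu_0)}>1.
\]

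To conclude, since $q>1$ we have $\mu^{q-1}\log(1/\mu)\to 0$ as $\mu\to 0^+$, so there exists $\mu_1\in(0,\mu_0)$ and $C'>0$ with $(\alpha\mu_0)^{-1}\mu^{q-1}\log(1/\mu)\le C'$ for all $\mu<\mu_1$, which gives the claimed $\overline{\Phi}_f(\mu)\le C'\mu/\log(1/\mu)$. I expect the main obstacle to be the innocuous-looking sub-multiplicativity, where monotonicity of $f$ is essential in two places: first, to keep both factors in $(0,1]$ so that the limsup-of-product inequality applies; and second, to get the uniform bound $\overline{\Phi}_f(\tau)\le 1$ on $\tau\in(\mu_0,1]$ without invoking \eqref{eq.fasypresmu} at $\tau$ (which gives only a pointwise bound that need not be uniform as $\tau\to 1^-$).
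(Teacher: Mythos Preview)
Your argument is correct and shares the same engine as the paper's: both fix a reference scale, iterate a (sub)multiplicativity relation, and use monotonicity of $f$ to absorb the non--integer remainder in the exponent. The paper works on the reciprocal side, setting $L(\lambda)=1/\overline{\Phi}_f(1/\lambda)$ and deducing $L(\lambda)\ge L(\lambda_0)^n$ for $\lambda_0^n\le\lambda<\lambda_0^{n+1}$; it then expands $(\lambda_0+\delta)^n$ binomially and keeps only the linear term $n\lambda_0^{n-1}\delta$, which is why it ends up with merely $L(\lambda)\gtrsim\lambda\log\lambda$, i.e.\ $\overline{\Phi}_f(\mu)\lesssim\mu/\log(1/\mu)$. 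Your direct computation with $(\alpha\mu_0)^n$ keeps the full geometric rate and extracts the strictly stronger bound $\overline{\Phi}_f(\mu)\le C\mu^{q}$ for some $q>1$; the paper's statement is then recovered as a trivial corollary. So your route is a refinement: same mechanism, sharper output, and arguably cleaner bookkeeping since you avoid the detour through $L$ and the binomial truncation.
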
	
	\begin{proof}
		Define $L(\lambda)=1/\overline{\Phi}_f(1/\lambda)$. Then for all $\lambda>1$ we have 
		\[
		\liminf_{x\to 0^+} \frac{f(\lambda x)}{f(x)}=L(\lambda)>\lambda,
		\]
		the last inequality coming from \eqref{eq.fasypresmu}. Next, let $\lambda_0>1$ be fixed and define $\delta:=L(\lambda_0)-\lambda_0>0$. 
		Now for any arbitrary $\lambda\geq \lambda_0>1$, we have that there exists a positive integer $n=n(\lambda)$ such that $\lambda_0^n \leq \lambda<\lambda_0^{n+1}$. Moreover since $f$ is increasing 
		\[
		L(\lambda)=\liminf_{x\to 0} \frac{f(\lambda x)}{f(x)}
		\geq \liminf_{x\to 0} \frac{f(\lambda_0^n x)}{f(x)}
		=\liminf_{x\to 0} \prod_{j=1}^n \frac{f(\lambda_0^j x)}{f(\lambda_0^{j-1}x)}
		\geq L(\lambda_0)^n.
		\]
		Hence $L(\lambda)\geq L(\lambda_0)^n=(\lambda_0+\delta)^n \geq \lambda_0^n + n \lambda_0^{n-1}\delta$. Since $\lambda_0^{n+1}>\lambda$, this yields
		\[
		L(\lambda)>\frac{\lambda}{\lambda_0}+n \frac{\lambda}{\lambda_0^2}(L(\lambda_0)-\lambda_0).
		\]
		Finally $n+1>\log \lambda/\log\lambda_0$, so 
		\[
		L(\lambda)>\frac{\lambda}{\lambda_0}+ \left(\frac{\log(\lambda)}{\log(\lambda_0)}-1\right) \frac{\lambda}{\lambda_0^2}(L(\lambda_0)-\lambda_0), \quad \lambda\geq \lambda_0.
		\]
		Thus 
		\[
		\liminf_{\lambda\to \infty} \frac{L(\lambda)}{\lambda\log(\lambda)}
		\geq \frac{L(\lambda_0)-\lambda_0}{\log(\lambda_0)\lambda_0^2}=:C>0
		\]
		Hence there exists $\lambda_1>\lambda_0$ such that for all $\lambda>\lambda_1$ we have $L(\lambda)>C/2 \lambda \log\lambda \log(\lambda)$. Put $\mu=1/\lambda$ and $\mu_1=1/\lambda_1<1$. Then for $\mu<\mu_1$ we have 
		\[
		\overline{\Phi}_f(\mu)=\frac{1}{L(\lambda)}<\frac{1}{C/2 \lambda\log(1/\lambda)}=\frac{2}{C}\frac{\mu}{\log(1/\mu)}=:C'\frac{\mu}{\log(1/\mu)},
		\]
		as required. 
	\end{proof}
	
	%
	%
	
	The condition \eqref{eq.fasypresmu} is the most ``quantitative'', and therefore is a condition that we speculate may be convenient to impose, or needlessly restrictive. However, if we want to work within the framework of asymptotic preserving $F^{-1}$, even a modest relaxation of \eqref{eq.fasypresmu} forces e.g., \eqref{eq.Finvnotpres} to be satisfied, rather than \eqref{eq.Finvaspliminf} and \eqref{eq.Finvasplimsup}.
	
	A rather natural way to violate \eqref{eq.fasypresmu} is to change it to
	\begin{equation} \label{eq.fnotasypresmu}
		\liminf_{x\to 0^+} \frac{f(\mu x)}{f(x)}=:\underline{\Phi}_f(\mu)\geq \mu, \quad \text{ for some $\mu<1$}.
	\end{equation}
	
	\begin{lemma}
		Suppose $f\in C((0,\infty);(0,\infty))$ is increasing and obeys 
		\eqref{eq.fnotasypresmu}. Then
		\[
		\lim_{x\to 0} \frac{F(x)}{x/f(x)}=+\infty,
		\] 	
		and therefore $F^{-1}$ obeys \eqref{eq.Finvnotpres}.
	\end{lemma}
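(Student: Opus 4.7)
The plan is to set $\rho(x):=F(x)f(x)/x$, derive a one-step lower-bound recurrence of the form $\rho(\mu x)\geq \alpha_\eta\rho(x)+C_\eta$ in which an auxiliary parameter $\eta>0$ enters through the hypothesis, iterate it, and then let $\eta\to 0^+$ to force $\rho$ to diverge. Once $\lim_{x\to 0^+}\rho(x)=+\infty$ is in hand, the conclusion on $F^{-1}$ is immediate from part~(iv) of Lemma~\ref{lem.Finvapchar}. The overall architecture mirrors the iteration scheme in Lemma~\ref{lem.importfapFinv}, with all the inequalities pointing the opposite way.

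For the recurrence, fix the $\mu\in(0,1)$ supplied by \eqref{eq.fnotasypresmu}. Writing $F(\mu x)=F(x)+\int_{\mu x}^x du/f(u)$ and using monotonicity of $f$ to bound $1/f(u)\geq 1/f(x)$ on $[\mu x,x]$, one obtains $F(\mu x)\geq F(x)+(1-\mu)x/f(x)$. Multiplying through by $f(\mu x)/(\mu x)$ yields
\[
\rho(\mu x)\geq \frac{1}{\mu}\frac{f(\mu x)}{f(x)}\rho(x)+\frac{1-\mu}{\mu}\frac{f(\mu x)}{f(x)}.
\]
Given $\eta\in(0,\mu)$, hypothesis \eqref{eq.fnotasypresmu} supplies an $x_0(\eta)>0$ with $f(\mu x)/f(x)>\mu-\eta$ for all $x<x_0(\eta)$. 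Writing $\alpha_\eta:=(\mu-\eta)/\mu\in(0,1)$ and $C_\eta:=(1-\mu)(\mu-\eta)/\mu>0$, the previous display becomes $\rho(\mu x)>\alpha_\eta\rho(x)+C_\eta$ for $x<x_0(\eta)$, and a straightforward induction gives $\rho(\mu^n x)>\alpha_\eta^n\rho(x)+C_\eta(1-\alpha_\eta^n)/(1-\alpha_\eta)$ for all such $x$ and every $n\in\mathbb{N}$.

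Taking the infimum over $x\in[\mu x_0(\eta),x_0(\eta)]$ and discarding the nonnegative term $\alpha_\eta^n\rho(x)$, one concludes
\[
\inf_{y\in[\mu^{n+1}x_0(\eta),\mu^n x_0(\eta)]}\rho(y)\geq C_\eta\frac{1-\alpha_\eta^n}{1-\alpha_\eta},
\]
whose right-hand side converges as $n\to\infty$ to $(1-\mu)(\mu-\eta)/\eta$. Since the intervals $[\mu^{n+1}x_0(\eta),\mu^n x_0(\eta)]$ exhaust $(0,x_0(\eta)]$ as $n$ varies, this yields $\liminf_{x\to 0^+}\rho(x)\geq (1-\mu)(\mu-\eta)/\eta$ for every $\eta\in(0,\mu)$. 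Letting $\eta\to 0^+$, the right-hand side blows up, so $\lim_{x\to 0^+}\rho(x)=+\infty$, and part~(iv) of Lemma~\ref{lem.Finvapchar} completes the proof. I expect the main subtle point to be precisely this final $\eta\to 0^+$ step: for each fixed $\eta>0$ the iterated bound saturates at the finite constant $(1-\mu)(\mu-\eta)/\eta$, and only by exploiting the divergence of that constant as $\eta\to 0^+$ can one extract the full conclusion from the rather weak hypothesis $\underline{\Phi}_f(\mu)\geq\mu$ (as opposed to the cleaner case $\underline{\Phi}_f(\mu)>\mu$, which would immediately furnish a recurrence with contraction factor greater than one).
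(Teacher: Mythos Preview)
Your proof is correct and follows essentially the same approach as the paper: both define $\rho(x)=F(x)f(x)/x$, derive the same lower-bound recurrence $\rho(\mu x)\geq \rho(x)\cdot f(\mu x)/(\mu f(x))+(1-\mu)f(\mu x)/(\mu f(x))$ from monotonicity of $f$, and iterate. The only difference is cosmetic: the paper splits into two cases according to whether $\underline{\Phi}_f(\mu)/\mu$ equals $1$ or exceeds $1$ (the latter giving a recurrence factor $>1$ directly), whereas you handle both cases uniformly by always working with $\alpha_\eta<1$ and pushing $\eta\to 0^+$ at the end---exactly the subtlety you anticipated, and exactly what the paper does in its Case~1.
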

	\begin{proof}
		As before, define $\rho(x)=F(x)f(x)/x$, and for any $\mu\in (0,1)$ write 
		\[
		F(\mu x)=F(x)+\int_{\mu x}^x \frac{1}{f(u)}\,du,
		\]
		so 
		\[
		\rho(\mu x)=\rho(x)\cdot\frac{f(\mu x)}{\mu f(x)} + \frac{1}{\mu x}\int_{\mu x}^x \frac{f(\mu x)}{f(u)}\,du.
		\]
		Since $f$ is increasing, in the integral we have $1/f(u)>1/f(x)$ for $u>x$, so 
		\begin{equation} \label{eq.rholwrbd1}
			\rho(\mu x)\geq \rho(x)\cdot\frac{f(\mu x)}{\mu f(x)} + (1-\mu)\frac{f(\mu x)}{\mu f(x)}.
		\end{equation}
		Now fix a value of $\mu$ for which \eqref{eq.fnotasypresmu} holds. Defining $\phi_\mu:=\underline{\Phi}_f(\mu)/\mu\geq 1$, we see for every $\epsilon\in (0,1)$ sufficiently small, and $\mu\in (0,1)$ we have that there exists $\tilde{x}_1(\epsilon,\mu)<1$ such that 
		\[
		\frac{f(\mu x)}{\mu f(x)} > \phi_\mu-\epsilon, \quad x\leq \tilde{x}_1(\epsilon,\mu).
		\]
		Hence for all $\epsilon\in (0,1)$ sufficiently small, there is an $\tilde{x}_1(\epsilon,\mu)<1$ such that 
		\begin{equation} \label{eq.rholwrbd2}
			\rho(\mu x)\geq \rho(x) (\phi_\mu-\epsilon) + (1-\mu)(\phi_\mu-\epsilon), \quad x<\tilde{x}_1(\epsilon,\mu).
		\end{equation}
		Letting $a=\phi_\mu-\epsilon$ and $b=(1-\mu)(\phi_\mu-\epsilon)$, and noting that $\mu<1$, by iterating the inequality, we get  
		\[
		\rho(\mu^n x) \geq (\phi_\mu-\epsilon)^n\rho(x)+(1+a+a^2+\cdots+a^{n-1})(1-\mu)(\phi_\mu-\epsilon), \quad x<\tilde{x}_1(\mu,\epsilon)
		\] 
		We consider now two cases. 
		
		\textbf{Case 1: $\phi_\mu=1$.} If this is the case, then $a=1-\epsilon$, and 
		the geometric series sums to 
		\[
		\frac{1-(1-\epsilon)^n}{\epsilon}.
		\]
		Since $\rho(x)>0$, we have 
		\[
		\rho(\mu^n x) \geq\frac{1-(1-\epsilon)^n}{\epsilon}(1-\mu)(1-\epsilon), \quad x<\tilde{x}_1(\mu,\epsilon)
		\]
		Thus 
		\[
		\inf_{x\in [\mu^{n+1}\tilde{x}_1(\mu,\epsilon),\mu^n \tilde{x}_1(\mu,\epsilon)]} \rho(x)
		\geq \frac{1-(1-\epsilon)^n}{\epsilon}(1-\mu)(1-\epsilon).
		\]
		Since $\mu<1$, and $1-\epsilon\in (0,1)$, by taking the limit as $n\to\infty$, we have that 
		\[
		\liminf_{y\to 0^+} \rho(y)\geq \frac{1}{\epsilon}(1-\mu)(1-\epsilon).
		\]
		Finally, since $\epsilon\in (0,1)$ is arbitrary, we may let it tend to zero, giving $\liminf_{y\to 0^+} \rho(y)=+\infty$, as needed.
		
		\textbf{Case 2: There exists $\mu\in (0,1)$ such that $\phi_\mu>1$.} 
		In this case, take $\epsilon=\epsilon_\mu>0$ so small that $a=\phi_\mu-\epsilon>1$. Take $x_1(\mu)=\tilde{x}_1(\epsilon_\mu,\mu)$. 
		Then 
		\[
		\rho(\mu^n x) \geq \frac{a(a^n-1)}{a-1} (1-\mu), \quad x\leq x_1(\mu).
		\]
		Thus 
		\[
		\inf_{x\in[\mu^{n+1}x_1(\mu),\mu^n x_1(\mu)]} \rho(x)
		\geq \frac{a(a^n-1)}{a-1} (1-\mu).
		\]
		Since $a>1$, we have $\inf_{x\in[\mu^{n+1}x_1(\mu),\mu^n x_1(\mu)]} \rho(x)\to\infty$ as $n\to\infty$. Therefore, as $\mu<1$, we have 
		$\liminf_{y\to 0} \rho(y)=+\infty$, as required. 
	\end{proof}
	
	Finally, we may ask what happens in the case that $f$ is not asymptotically preserving in the sense that 
	\begin{equation} \label{eq.fnotap}
		\lim_{x\to 0+} \frac{f(\mu x)}{f(x)}=0, \quad \mu<1.
	\end{equation}
	In this case, $F$ is not asymptotic preserving, and $F^{-1}$ obeys \eqref{eq.Finvhyperpres}. 
	
	\begin{lemma} \label{lem.fnotapFinvhyper}
		Let $f\in C((0,\infty);(0,\infty))$ be increasing. If $f$ obeys \eqref{eq.fnotap} 	then 
		\[
		\lim_{x\to 0^+} \frac{F(\mu x)}{F(x)}=+\infty, \quad \mu<1.
		\]
		and $F^{-1}$ obeys \eqref{eq.Finvhyperpres}.
	\end{lemma}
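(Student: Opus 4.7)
The plan is to reduce both conclusions to the single claim that $\rho(x):=F(x)f(x)/x\to 0$ as $x\to 0^+$. Granted this, part (iii) of Lemma~\ref{lem.Finvapchar} immediately yields that $F^{-1}$ obeys \eqref{eq.Finvhyperpres}. The ratio $F(\mu x)/F(x)\to+\infty$ then follows from the elementary lower bound
\[
F(\mu x)-F(x)=\int_{\mu x}^x\frac{du}{f(u)}\geq \frac{(1-\mu)x}{f(x)},
\]
obtained because $f$ is increasing on $[\mu x,x]$, which gives $F(\mu x)/F(x)\geq 1+(1-\mu)/\rho(x)\to+\infty$.

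To prove $\rho(x)\to 0$, I would first note that \eqref{eq.fnotap} trivially implies \eqref{eq.fasypresmu}, since it forces $\overline{\Phi}_f(\mu)=0<\mu$ for every $\mu\in(0,1)$. Hence Lemma~\ref{lem.importfapFinv} is applicable and provides $L:=\limsup_{x\to 0^+}\rho(x)<\infty$. The finiteness of $L$ is crucial, since it is what licenses taking a $\limsup$ across a functional inequality for $\rho$.

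The functional inequality I would establish is as follows. Fix $\mu\in(0,1)$. For an arbitrary $\eta\in(0,\mu)$, hypothesis \eqref{eq.fnotap} furnishes $x_1(\eta,\mu)>0$ such that $f(\mu x)<\eta f(x)$ for $x<x_1(\eta,\mu)$. Monotonicity of $f$ on $[\mu x,x]$ gives $F(\mu x)\leq F(x)+(1-\mu)x/f(\mu x)$. Multiplying through by $f(\mu x)/(\mu x)$ and invoking the bound on $f(\mu x)/f(x)$ yields
\[
\rho(\mu x)\leq \frac{\eta}{\mu}\rho(x)+\frac{1-\mu}{\mu},\qquad x<x_1(\eta,\mu).
\]
Passing to $\limsup_{x\to 0^+}$ on both sides (legal because $L<\infty$), one obtains $L\leq(\eta/\mu)L+(1-\mu)/\mu$, and hence $L\leq(1-\mu)/(\mu-\eta)$. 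Letting $\eta\to 0^+$ gives $L\leq(1-\mu)/\mu$; since $\mu\in(0,1)$ was arbitrary, letting $\mu\to 1^-$ forces $L=0$. As $\rho$ is nonnegative, this is exactly $\rho(x)\to 0$.

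The main obstacle is conceptual rather than technical: one must recognise that Lemma~\ref{lem.importfapFinv} should be invoked first to secure $L<\infty$, after which the problem collapses via the two nested limits in $\eta$ and $\mu$. Without the a priori finiteness of $L$, one would be forced to mimic the explicit iterative construction used earlier in the section, estimating $\rho(\mu^n x)$ geometrically and then taking a supremum over the fundamental domain $[\mu x_1(\eta,\mu),x_1(\eta,\mu)]$; that route is available and avoids appealing to Lemma~\ref{lem.importfapFinv}, but it is noticeably longer.
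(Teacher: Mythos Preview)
Your argument is correct, but it takes a genuinely different route from the paper's. The paper never proves $\rho(x)\to 0$: it attacks $F(\mu x)/F(x)\to\infty$ directly by substituting $u=\mu v$ in the integral defining $F(\mu x)$, using $f(\mu v)<\epsilon f(v)$ on $[x,\tilde x_1]$ to get $F(\mu x)\ge(\mu/\epsilon)\int_x^{\tilde x_1} dv/f(v)+I(\mu,\epsilon)$, dividing by $F(x)$, and sending first $x\to 0^+$ then $\epsilon\to 0^+$. It then deduces the $F^{-1}$ statement from this ratio property by an explicit inversion: $F(\lambda x)<\epsilon F(x)$ is rewritten as $\lambda F^{-1}(t)>F^{-1}(\epsilon t)$, and the arbitrariness of $\lambda>1$ is exploited. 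By contrast, you reduce everything to the single scalar claim $\rho\to 0$, invoking Lemma~\ref{lem.importfapFinv} for the a priori finiteness of $L$ (which legitimises passing the $\limsup$ through the recursion) and then Lemma~\ref{lem.Finvapchar}(iii) for the $F^{-1}$ conclusion; your elementary lower bound $F(\mu x)/F(x)\ge 1+(1-\mu)/\rho(x)$ then handles the first claim. Your approach is more modular and exploits the earlier machinery, yielding a shorter argument; the paper's is self-contained and avoids reliance on Lemma~\ref{lem.Finvapchar} and Lemma~\ref{lem.importfapFinv}, at the cost of re-doing the integral estimates by hand.
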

	\begin{proof}
		By the limit and Lemma \ref{lemma.Ftoinftyfderiv0}, we have that $F(x)\to \infty$ as $x\to 0^+$.Also by the limit, for every $\mu<1$ and $\epsilon\in (0,1)$ there is $\tilde{x}_1(\mu,\epsilon)<1$ such that $x\leq \tilde{x}_1(\mu,\epsilon)$ implies $f(\mu x)<\epsilon f(x)$. Let $x<\tilde{x}_1(\mu,\epsilon)<1$ and write 
		\[
		F(\mu x) = \int_{\mu x}^1 \frac{1}{f(u)}\,du = \mu \int_{x}^{1/\mu} \frac{1}{f(\mu v)}\,dv,
		\]
		so with $I(\mu,\epsilon)= \mu \int_{\tilde{x}_1}^{1/\mu} \frac{1}{f(\mu v)}\,dv$, we get  
		\[
		F(\mu x) = \mu \int_{x}^{\tilde{x}_1} \frac{1}{f(\mu v)}\,dv + I(\mu,\epsilon) 
		> \frac{\mu}{\epsilon} \int_{x}^{\tilde{x}_1} \frac{1}{f(v)}\,dv +I(\mu,\epsilon). 
		\]
		Since $F(x)\to\infty$ as $x\to 0^+$, divide across by $F(x)$ and let $x\to 0^+$ to get 
		\[
		\liminf_{x\to 0^+} \frac{F(\mu x)}{F(x)}\geq \frac{\mu}{\epsilon}.
		\]
		Now, since $\epsilon>0$ is arbitrary, let $\epsilon\to 0^+$, giving 
		$F(\mu x)/F(x)\to +\infty$ as $x\to 0^+$, for any $\mu<1$, or alternatively $F(\lambda x)/F(x)\to 0$ as $x\to 0^+$ for all $\lambda>1$. Thus, for every $\epsilon\in (0,1)$ we have $x_2(\lambda,\epsilon)<1$ such that $x<x_2(\lambda,\epsilon)$ implies $F(\lambda x)<\epsilon F(x)$. Now, let 
		$T_2(\lambda,\epsilon)=F(x_2(\lambda,\epsilon))$. Then for $t>T_2(\lambda,\epsilon)$ we have $F(\lambda F^{-1}(t))<\epsilon t$, so since $F^{-1}$ is decreasing, this becomes $\lambda F^{-1}(t)>F^{-1}(\epsilon t)$ for all $t\geq T_2$. Thus, as $\epsilon\in (0,1)$, we have  
		\[
		1<\frac{F^{-1}(\epsilon t)}{F^{-1}(t)}<\lambda, \quad t\geq T_2(\epsilon,\lambda).
		\]  
		Therefore we have 
		\[
		1\leq \liminf_{t\to\infty} \frac{F^{-1}(\epsilon t)}{F^{-1}(t)}\leq 
		\limsup_{t\to\infty} \frac{F^{-1}(\epsilon t)}{F^{-1}(t)}\leq\lambda.
		\]
		Since $\lambda>1$ is arbitrary, let $\lambda\to 1^-$ to get 
		\[
		\lim_{t\to\infty} \frac{F^{-1}(\epsilon t)}{F^{-1}(t)}=1.
		\]
		But this is true for all $\epsilon\in (0,1)$, so it follows that $F^{-1}$ obeys \eqref{eq.Finvhyperpres}.
	\end{proof}
	
	If the parameterised family of functions $x\mapsto f(x)/x^{1+\eta}$ for various values of $\eta\geq 0$ have certain clearcut monotonicity conditions, it is possible to give a classification of whether $F^{-1}$ obeys \eqref{eq.Finvaspliminf} and \eqref{eq.Finvasplimsup} (the case covered in this paper), \eqref{eq.Finvnotpres} (which subdivides into the case where we have faster or slower than exponential decay), or \eqref{eq.Finvhyperpres} (in which case we have slower than power law decay). The properties of $F^{-1}$ in each case stem from the fact that the monontonicity hypothesis forces $f$ to enjoy the appropriate asymptotic preserving (or non--preserving) property of $f$ (viz., \eqref{eq.fasypres}, \eqref{eq.fasypresmu}, \eqref{eq.fnotap}, \eqref{eq.fnotasypresmu}).  
	
	\begin{proposition}
		Let $f\in C((0,\infty);(0,\infty))$ be increasing. 
		\begin{itemize}
			\item[(i)] If $x\mapsto f(x)/x^{1+\epsilon}$ is increasing for some  $\epsilon>0$, and  $x\mapsto f(x)/x^{1+\eta}$ is decreasing for some $\eta>0$, then $f$ obeys \eqref{eq.fasypres} and \eqref{eq.fasypresmu}. Consequently, $F^{-1}$ obeys \eqref{eq.Finvaspliminf} and \eqref{eq.Finvasplimsup}. 
			\item[(ii)] If $x\mapsto f(x)/x$ is increasing and $x\mapsto f(x)/x^{1+\eta}$ is decreasing for all $\eta>0$, then $f$ obeys \eqref{eq.fasypres} and \eqref{eq.fnotasypresmu}. Consequently, $F^{-1}$ obeys \eqref{eq.Finvnotpres}. 
			\item[(iii)] If $x\mapsto f(x)/x$ is decreasing, then $f$ obeys \eqref{eq.fasypres} and \eqref{eq.fnotasypresmu}. Consequently, $F^{-1}$ obeys \eqref{eq.Finvnotpres}. 
			\item[(iv)] If $x\mapsto f(x)/x^{1+\epsilon}$ is increasing for all $\epsilon>0$, then $f$ obeys \eqref{eq.fnotap}. Consequently, 
			$F^{-1}$ obeys \eqref{eq.Finvhyperpres}. 
		\end{itemize}
	\end{proposition}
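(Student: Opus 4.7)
The plan is to translate each monotonicity hypothesis into a pointwise estimate on the ratio $f(\mu x)/f(x)$ for $\mu\in(0,1)$ via the identity
\[
\frac{f(\mu x)}{f(x)}=\mu^{1+\theta}\cdot\frac{f(\mu x)/(\mu x)^{1+\theta}}{f(x)/x^{1+\theta}}.
\]
For $\mu\in(0,1)$, monotonicity of $x\mapsto f(x)/x^{1+\theta}$ then translates into $f(\mu x)/f(x)\leq \mu^{1+\theta}$ (increasing case) or $f(\mu x)/f(x)\geq \mu^{1+\theta}$ (decreasing case). With these pointwise bounds in hand, each of the asymptotic preserving or non-preserving properties of $f$ follows by an appropriate limit in $\mu$ or in $\theta$, and the conclusions about $F^{-1}$ follow directly from the lemmas already established.

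In detail, for part (i), with the given positive constants $\alpha$ and $\beta$ (the $\epsilon$ and $\eta$ of the statement), one obtains $\mu^{1+\beta}\leq f(\mu x)/f(x)\leq \mu^{1+\alpha}$ for all $\mu\in(0,1)$ and $x$ near $0$. Setting $\mu=1-\varepsilon$ in the lower bound gives $\underline{\Phi}_f(\varepsilon)\geq (1-\varepsilon)^{1+\beta}$, which tends to $1$ as $\varepsilon\to 0^+$, establishing \eqref{eq.fasypres}; the upper bound gives $\overline{\Phi}_f(\mu)\leq \mu^{1+\alpha}<\mu$, establishing \eqref{eq.fasypresmu}. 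Lemmas~\ref{lem.importfapFinv} and~\ref{lem.importfmuapFinv} then yield $l>0$ and $L<\infty$, and parts (i) and (ii) of Lemma~\ref{lem.Finvapchar} deliver \eqref{eq.Finvaspliminf} and \eqref{eq.Finvasplimsup}. For parts (ii) and (iii), \eqref{eq.fasypres} is obtained in the same way: in (ii) from the one-parameter family $f(x)/x^{1+\eta}$ being decreasing, by letting $\eta\to 0^+$ in $\underline{\Phi}_f(\varepsilon)\geq(1-\varepsilon)^{1+\eta}$; in (iii) directly from $f(x)/x$ being decreasing, which gives $\underline{\Phi}_f(\varepsilon)\geq 1-\varepsilon$. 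Similarly, \eqref{eq.fnotasypresmu} is obtained from $f(\mu x)/f(x)\geq \mu^{1+\eta}$ (letting $\eta\to 0^+$) in case (ii), and directly from $f(\mu x)/f(x)\geq \mu$ in case (iii). The preceding lemma on \eqref{eq.fnotasypresmu} then delivers \eqref{eq.Finvnotpres}. For part (iv), increasing monotonicity of $f(x)/x^{1+\epsilon}$ for \emph{every} $\epsilon>0$ gives $f(\mu x)/f(x)\leq \mu^{1+\epsilon}$ for every $\epsilon>0$; since $\mu<1$, letting $\epsilon\to\infty$ yields \eqref{eq.fnotap}, and Lemma~\ref{lem.fnotapFinvhyper} then gives \eqref{eq.Finvhyperpres}.

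The proof is essentially a bookkeeping exercise once the translation from monotonicity to ratio bounds is made, so there is no genuine obstacle. The only subtlety to watch is the order of limits in parts (ii) and (iv), where a family of inequalities indexed by $\theta$ must be passed through a limit in $\theta$; since the left-hand side (a $\liminf$ or $\limsup$ in $x$) does not depend on $\theta$ while the right-hand side is monotone in $\theta$, this causes no difficulty.
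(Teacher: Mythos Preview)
Your proposal is correct and follows essentially the same approach as the paper: translate each monotonicity hypothesis into the pointwise bound $f(\mu x)/f(x)\lessgtr \mu^{1+\theta}$, read off \eqref{eq.fasypres}, \eqref{eq.fasypresmu}, \eqref{eq.fnotasypresmu} or \eqref{eq.fnotap} by taking appropriate limits in $\mu$ or $\theta$, and then invoke the earlier lemmas for the consequences on $F^{-1}$. Your write-up is in fact slightly more explicit than the paper's in naming which lemmas deliver the $F^{-1}$ conclusions, and in part~(ii) you correctly observe (implicitly) that the decreasing hypothesis alone suffices for both \eqref{eq.fasypres} and \eqref{eq.fnotasypresmu}, so the assumption that $f(x)/x$ is increasing is not actually needed there.
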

	\begin{proof}
		Part (i). Suppose $\mu<1$. By monotonicity 
		\[
		\frac{f(\mu x)}{\mu^{1+\epsilon}}\leq \frac{f(x)}{x^{1+\epsilon}}.
		\]
		Then $f(\mu x)/f(x) \leq \mu^{1+\epsilon}$, so taking limits, we see that \eqref{eq.fasypresmu} holds with $\Phi_f(\mu)=\mu^{1+\epsilon}<\mu$ for all $\mu\in (0,1)$. 
		Let $\epsilon\in (0,1)$, so that $1-\epsilon\in (0,1)$. Then $(1-\epsilon)x<x$, so 
		\[
		\frac{f((1-\epsilon)x)}{(1-\epsilon)^\eta x^\eta}\geq \frac{f(x)}{x^\eta}.
		\]
		Hence $f((1-\epsilon)x)/f(x) \geq  (1-\epsilon)^\eta$, 
		and the lefthand side is bounded above by unity, since $f$ is increasing. Thus, taking the liminf as $x\to 0$ yields 
		\[
		1\geq \underline{\Phi}_f(\epsilon):=\liminf_{x\to 0^+} \frac{f((1-\epsilon)x)}{f(x)} \geq  (1-\epsilon)^\eta,
		\]
		from which \eqref{eq.fasypres} follows. 
		
		Part (ii). Let $\mu<1$. Since $x\mapsto f(x)/x$ is increasing, we have $f(\mu x)/f(x)\leq \mu$, so 
		\[
		\limsup_{x\to 0^+} \frac{f(\mu x)}{f(x)}\leq \mu.
		\] 
		On the other hand, $x\mapsto f(x)/x^{1+\eta}$ is decreasing, we have for $\mu <1$, $f(\mu x)/f(x)\geq \mu^{1+\eta}$. Hence 
		\[
		\liminf_{x\to 0^+} \frac{f(\mu x)}{f(x)}\geq \mu^{1+\eta}.
		\] 
		Since this is true for all $\eta>0$, we may let $\eta\to 0^+$ to give 
		$f(\mu x)/f(x)\to \mu$ as $x\to 0$ for all $\mu<1$. This means that both \eqref{eq.fasypres} and \eqref{eq.fnotasypresmu} hold, and hence that \eqref{eq.Finvnotpres} holds. 
		
		Part (iii). If $x\mapsto f(x)/x$ is decreasing, then for $\mu<1$ we have $f(\mu x)/f(x)>\mu$, so taking limits as $x\to 0^+$ we have that 
		\[
		\underline{\Phi}_f(\mu):=\liminf_{x\to 0^+} \frac{f(\mu x)}{f(x)}\geq \mu,
		\]
		for all $\mu>1$. Thus, we have that $f$ obeys \eqref{eq.fasypres}. 
		Also, we have $\underline{\Phi}_f(\mu)\geq \mu$ for all $\mu<1$, so \eqref{eq.fnotasypresmu} holds also. As a consequence of this last fact, $F^{-1}$ obeys \eqref{eq.Finvnotpres}.
		
		Part (iv). Since $x\mapsto f(x)/x^{1+\epsilon}$ is increasing for all $\epsilon>0$, for $\mu<1$ we have 
		\[
		\frac{f(\mu x)}{f(x)}\leq \mu^{1+\epsilon}.
		\]
		This implies for all $\mu<1$ and all $\epsilon>0$ that 
		\[
		0\leq \limsup_{x\to 0^+} \frac{f(\mu x)}{f(x)}\leq \mu^{1+\epsilon}.
		\]
		Now, let $\epsilon\to \infty$; since $\mu\in (0,1)$, the righthand side tends to zero, and we have for $\mu\in (0,1)$
		\[
		\limsup_{x\to 0^+} \frac{f(\mu x)}{f(x)}=0,
		\]
		which is exactly \eqref{eq.fnotap}. By Lemma \ref{lem.fnotapFinvhyper}, this implies that $F^{-1}$ obeys \eqref{eq.Finvhyperpres}.
	\end{proof}

	\section{Internally Perturbed ODEs and Connection with Externally Perturbed Equations}
	As suggested in the introduction, the asymptotic analysis of the ``externally'' perturbed differential equation 
	\[
	x'(t)=-f(x(t))+g(t), \quad t>0; \quad x(0)=\zeta,
	\]
	is facilitated by considering the related ``internally'' perturbed ordinary differential equation 
	\begin{equation} \label{eq.z}
		z'(t)=-f(z(t)+\Gamma(t)), \quad t>0; \quad z(0)=\xi.
	\end{equation}
	This section is devoted to the analysis of \eqref{eq.z}. The connection between the equations is the following: if 
	\begin{equation} \label{eq.g2}
		\lim_{t\to\infty} \int_0^t g(s)\,ds \text{ exists and is finite}
	\end{equation}
	holds, then the function 
	\begin{equation} \label{eq:Gamma}
		\Gamma(t):=-\int_t^\infty g(s)\,ds, \quad t\geq 0
	\end{equation}
	is well--defined, as is the function $z(t)=x(t)-\Gamma(t)$, $t\geq 0$ where $x$ is the solution of the externally perturbed ODE. It is immediate that $z$ is a solution of \eqref{eq.z}. In fact, under \eqref{eq.g2}, it is true that $x(t)\to 0$ as $t\to\infty$ (and also $z(t)\to 0$ as $t\to\infty$), under mild conditions on $f$. Note that the condition \eqref{eq.g2} is strictly weaker than $g\in L^1(0,\infty)$.
	\begin{theorem} \label{thm.xto0zto0}
		Suppose that $f\in C(\mathbb{R};\mathbb{R})$ obeys \eqref{eq.fglobalstable}. 
		\begin{itemize}
			\item[(a)] Suppose $\Gamma$ is a continuous function obeying $\lim_{t \to \infty}\Gamma(t) = 0$. Then there exists a continuous solution $z$ of \eqref{eq.z} on $[0,\infty)$. Moreover, any such continuous solution is uniformly bounded on $[0,\infty)$ and obeys $z(t)\to 0$ as $t\to\infty$. 
			\item[(b)] Suppose $g$ is continuous and obeys \eqref{eq.g2}. 
			Then there exists a continuous solution $x$ of \eqref{eq.odepert} on $[0,\infty)$. Moreover, any such continuous solution is uniformly bounded on $[0,\infty)$ and obeys $x(t)\to 0$ as $t\to\infty$. 
		\end{itemize} 
	\end{theorem}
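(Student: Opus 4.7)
The plan is to establish (a) and reduce (b) to it via the substitution from the preamble. Under \eqref{eq.g2}, the function $\Gamma(t) := -\int_t^\infty g(s)\,ds$ is continuous, bounded, and satisfies $\Gamma(t)\to 0$ as $t\to\infty$; moreover a direct computation shows that $x$ is a continuous solution of \eqref{eq.odepert} with $x(0)=\xi$ if and only if $z := x-\Gamma$ is a continuous solution of \eqref{eq.z} with $z(0)=\xi-\Gamma(0)$. Because $\Gamma$ is bounded and tends to zero, both uniform boundedness and the limit at infinity transfer between $x$ and $z$, so (b) reduces to (a).

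For (a), local existence on a maximal interval $[0,T_{\max})$ follows from Peano's theorem applied to the continuous right-hand side $(t,u)\mapsto -f(u+\Gamma(t))$. To establish uniform boundedness of any continuous solution $z$, I would employ the Lyapunov function $V(t) = z(t)^2/2$, for which
\[
V'(t) = -z(t)\, f\bigl(z(t)+\Gamma(t)\bigr).
\]
The crucial observation is that whenever $|z(t)|>|\Gamma(t)|$, the quantities $z(t)$ and $z(t)+\Gamma(t)$ share a nonzero sign, so \eqref{eq.fglobalstable} gives $V'(t)<0$. Setting $M:=\sup_{t\geq 0}|\Gamma(t)|$ and $C:=\max(\xi^2,(M+1)^2)$, a first-crossing argument yields $z(t)^2\leq C$ throughout $[0,T_{\max})$: otherwise the least $t_0$ at which $z(t_0)^2=C$ satisfies $|z(t_0)|\geq M+1>|\Gamma(t_0)|$, hence $V'(t_0)<0$, and continuity of $V'$ precludes the existence of times $t_n\searrow t_0$ with $z(t_n)^2>C$. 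Boundedness rules out blow-up, so $T_{\max}=\infty$.

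The main obstacle is upgrading boundedness to $z(t)\to 0$. Fix $\epsilon>0$ and choose $T_\epsilon$ so that $|\Gamma(t)|<\epsilon$ for $t\geq T_\epsilon$; the observation above then gives $V'(t)<0$ at every $t\geq T_\epsilon$ at which $V(t)>\epsilon^2/2$. A short continuity argument shows that once $V(t)\leq\epsilon^2/2$ at some $t\geq T_\epsilon$ the inequality persists thereafter, since any later first crossing upward would occur at a point where $V$ equals $\epsilon^2/2$ but $V'<0$. Only two scenarios thus remain: either $V(t)\leq\epsilon^2/2$ eventually, whence $\limsup_{t\to\infty}|z(t)|\leq\epsilon$; or $V(t)>\epsilon^2/2$ for all $t\geq T_\epsilon$, so $V$ is strictly decreasing and bounded below, converging to some $V_\infty\geq \epsilon^2/2>0$. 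In the second scenario $|z(t)|\to\sqrt{2V_\infty}>0$ forces $z$ itself to converge to $\pm\sqrt{2V_\infty}$ (its sign is eventually fixed by the intermediate value theorem), whence continuity of $f$ and \eqref{eq.fglobalstable} give $V'(t)\to -\sqrt{2V_\infty}\,|f(\sqrt{2V_\infty})|<0$, contradicting the convergence of $V$. Hence $\limsup_{t\to\infty}|z(t)|\leq\epsilon$, and letting $\epsilon\to 0^+$ yields $z(t)\to 0$, which together with the reduction above completes the proof.
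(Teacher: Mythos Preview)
Your proof is correct. The paper itself does not supply a proof of this theorem; it only cites Chapter~3 of the thesis~\cite{tahanithesis}, so a line-by-line comparison is not possible. That said, your argument is the standard one for this type of result and is self-contained: the reduction of (b) to (a) via $z=x-\Gamma$ is exactly the mechanism the paper describes in Section~4, and your Lyapunov treatment of (a) using $V=z^2/2$ together with the sign observation that $|z(t)|>|\Gamma(t)|$ forces $V'(t)<0$ is the natural route. The first-crossing argument for boundedness and the dichotomy argument for convergence (either $V$ eventually drops below $\epsilon^2/2$, or it decreases monotonically to a positive limit, which is then ruled out because $V'$ would tend to a strictly negative constant) are both sound. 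One minor presentational point: your phrase ``the least $t_0$ at which $z(t_0)^2=C$'' should really be ``the last $t_0<t_1$ at which $z(t_0)^2=C$'' (i.e.\ the supremum of $\{s\le t_1: z(s)^2\le C\}$), since $z(0)^2$ may already equal $C$; the logic you intend is clear and the conclusion is unaffected.
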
   
	A proof of these results can be found in Chapter 3 of \cite{tahanithesis}.
	
	One potential advantage in studying \eqref{eq.z} rather than directly attacking the original ODE is that pointwise conditions on $g$ may no longer be needed to get decay properties, contingent on $\Gamma$ being well--defined. On the other hand, bringing the perturbation $\Gamma$ into the argument of $f$ in \eqref{eq.z} is what motivates the introduction of the asymptotic preservation property \eqref{eq.fasypres}. In a sense, therefore, we are making a trade--off between requiring extra control on $f$, set against a new ability to mollify potentially bad pointwise behaviour of the perturbation $g$ by capturing the effect of the perturbation via $\Gamma$.
	
	We demonstrate for \eqref{eq.z} that when the ``internal'' perturbation $\Gamma$ decays to zero so rapidly that 
	\begin{equation} \label{eq.gammaZ}
		\lim_{t\to\infty} \frac{\Gamma(t)}{F^{-1}(t)}=0,
	\end{equation}
	and the solution of \eqref{eq.z} tends to zero as $t\to\infty$, the asymptotic behaviour of  is preserved in the following sense.
	\begin{theorem} \label{thm.ThZ}
		Suppose that $f\in C(\mathbb{R};\mathbb{R})$ is increasing, odd and obeys \eqref{eq.fglobalstable}, \eqref{eq.fasypres}, and \eqref{eq.fasypresmu}. Suppose $F$ is defined by \eqref{def.F}.   Let $\Gamma$ be continuous and $z$ be the continuous solution of \eqref{eq.z}. 
		If $\Gamma$ obeys \eqref{eq.gammaZ}, then  
		\begin{align*}
			\lim_{t \to \infty} \frac{z(t)}{F^{-1}(t)} =\lambda\in \{-1,0,+1\}.
		\end{align*}
	\end{theorem}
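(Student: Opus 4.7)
The approach is to analyse the scaled ratio $\rho(t):=z(t)/F^{-1}(t)$ directly via its ODE. From Lemmas~\ref{lem.importfapFinv}, \ref{lem.importfmuapFinv} and \ref{lem.Finvapchar}, the hypotheses \eqref{eq.fasypres} and \eqref{eq.fasypresmu} imply that $F^{-1}$ is asymptotic preserving in the sense of both \eqref{eq.Finvaspliminf} and \eqref{eq.Finvasplimsup}; by Theorem~\ref{thm.xto0zto0}(a), $z(t)\to 0$. A direct computation using $(F^{-1})'(t)=-f(F^{-1}(t))$ gives
\[
\rho'(t)=\frac{\rho(t)\,f(F^{-1}(t))-f(\rho(t)F^{-1}(t)+\Gamma(t))}{F^{-1}(t)},
\]
which drives all the subsequent analysis.

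The central technical step is a no-crossing lemma: for each $c\in(0,1)$ there is $T_c$ such that $\rho'(t)>0$ whenever $\rho(t)=c$ and $t>T_c$, while for each $c>1$ there is $T_c$ such that $\rho'(t)<0$ whenever $\rho(t)=c$ and $t>T_c$. To prove the former, \eqref{eq.fasypresmu} yields $\limsup_{x\to 0^+} f(cx)/f(x)=\overline{\Phi}_f(c)<c$; next \eqref{eq.fasypres} applied to the pair $cF^{-1}(t)+\Gamma(t)$ and $cF^{-1}(t)$ (whose ratio tends to $1$ because $\Gamma/F^{-1}\to 0$) gives $f(cF^{-1}+\Gamma)/f(cF^{-1})\to 1$. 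Combining these, $f(cF^{-1}(t)+\Gamma(t))/f(F^{-1}(t))\le c-\delta$ for large $t$, hence $\rho'(t)>0$ at $\rho(t)=c$. The dual inequality for $c>1$ uses $\liminf_{x\to 0^+} f(cx)/f(x)\ge 1/\overline{\Phi}_f(1/c)>c$, which follows from \eqref{eq.fasypresmu} applied at $1/c$, in exactly the same way.

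By oddness of $f$ we may treat (A) $z(t)>0$ for all $t$ large, aiming for $\lambda\in\{0,1\}$, and (B) $z$ changes sign at arbitrarily large $t$, aiming for $\lambda=0$; the case $z<0$ eventually reduces to (A) via $(z,\Gamma)\mapsto(-z,-\Gamma)$. In (A), no-upcrossing at each $c>1$ forces $L:=\limsup\rho\le 1$; if $L=0$ then $\rho\to 0$ and $\lambda=0$. If $L>0$, pick $c\in(0,L)$: because $\rho$ exceeds $c$ at arbitrarily large times and cannot downcross $c$ for $t>T_c$, eventually $\rho\ge c$, so $\liminf\rho\ge L$ and $\rho\to L\in(0,1]$. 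In this case $\Gamma/z=(\Gamma/F^{-1})/\rho\to 0$, so \eqref{eq.fasypres} gives $(d/dt)F(z(t))=f(z+\Gamma)/f(z)\to 1$; hence $F(z(t))/t\to 1$, and a sandwich argument using the asymptotic preservation of $F^{-1}$ forces $\rho(t)=F^{-1}(F(z(t)))/F^{-1}(t)\to 1$, so $L=1$. In (B), fix $c\in(0,1)$: on each positive excursion of $z$ between consecutive zeros, $\rho$ starts and ends at $0$, so if $\rho$ exceeded $c$ on some excursion past $T_c$, the return to $0$ would entail a downcrossing of $c$, contradicting the lemma; thus peaks on large positive excursions are at most $c$, and symmetrically for negative excursions. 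Since $c\in(0,1)$ is arbitrary, $\rho\to 0$, so $\lambda=0$.

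The hardest part is the no-crossing lemma: securing the strict gap $f(cF^{-1}(t)+\Gamma(t))/f(F^{-1}(t))\le c-\delta$ (respectively $\ge c+\delta$) demands a careful marriage of the quantitative bound \eqref{eq.fasypresmu} with the qualitative absorption of the perturbation $\Gamma$ supplied by \eqref{eq.fasypres}, uniformly as $t\to\infty$. Once that lemma is established, the remainder is a monotone-trapping argument followed by asymptotic integration.
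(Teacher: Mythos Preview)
Your approach is correct and takes a genuinely different route from the paper's four--step ratcheting program on $|z|/F^{-1}$. The paper first shows $\liminf|z|/F^{-1}\le1$ by asymptotic integration (Lemma~\ref{lemma.step1zsimFinv}), then $\limsup\le1$ by constructing an explicit majorising solution, then rules out $\limsup\in(0,1)$ via the integral representation $z(t)=\int_t^\infty f(z+\Gamma)\,ds$ combined with the quantitative bound \eqref{eq.fasypresmu} (Lemma~\ref{lemma.step3zsimFinv}), and finally shows $\liminf=1$ when $\limsup=1$ by a minorising solution. Your no--crossing lemma replaces the separate majorisation and minorisation constructions by a single barrier argument on the level sets of $\rho$, and the sign--based case split (eventual positivity versus infinitely many sign changes) together with monotone trapping lets you establish that $\rho\to L$ directly and then pin $L$ down as $1$ by asymptotic integration, bypassing the paper's integral--representation estimate entirely. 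The paper's explicit comparison--function construction, on the other hand, transfers more readily to the $O$--bound of Theorem~\ref{ThZbdd}, where $\Gamma/F^{-1}$ is merely bounded and there is no exact limit available to feed into an asymptotic--integration argument.

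There is one gap. The assertion ``no--upcrossing at each $c>1$ forces $L:=\limsup\rho\le 1$'' is not justified as written: no--upcrossing only says that once $\rho$ drops below $c$ (for $t>T_c$) it stays below; it does not by itself exclude the possibility that $\rho(t)\ge c$ for all $t>T_c$. You must rule this out first, and you already have the tool: if $\rho(t)\ge c>1$ eventually, then $\Gamma/z\to0$, so by \eqref{eq.fasypres} one has $-z'(t)/f(z(t))\to1$, asymptotic integration gives $F(z(t))/t\to1$, and the asymptotic preservation of $F^{-1}$ then forces $\rho\to1<c$, a contradiction. This is precisely the content of Lemma~\ref{lemma.step1zsimFinv} in the paper, and it must be invoked (or re--derived) before your no--upcrossing argument can bite.
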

	We now explore how Theorem~\ref{thm.ThZ} can be applied to determine sufficient conditions for certain asymptotic decay in \eqref{eq.odepert}. 
	\subsection{Application of Theorem~\ref{thm.ThZ} to \eqref{eq.odepert}} \label{applyThm1}
	Consider the solution $x$ of \eqref{eq.odepert} which we suppose obeys $x(t)\to 0$ as $t\to\infty$. Introduce the function $u(t)=\int_0^t g(s)\,ds$ and assume that it tends to a finite limit as $t\to\infty$, which we call $u(\infty)$. We are therefore free to define $\Gamma(t)=u(t)-u(\infty)$ for $t\geq 0$. Clearly, $\Gamma$ is continuous and obeys $\Gamma(t)\to 0$ as $t\to\infty$. Of course, $u'(t)=g(t)$. Consider now $z(t)=x(t)-u(t)+u(\infty)=x(t)-\Gamma(t)$ for $t\geq 0$. Then $z$ is in $C^1((0,\infty);\mathbb{R})$ and we have that $z(t)\to 0$ as $t\to\infty$. Then $z(0)=\xi+\int_0^\infty g(s)\,ds=:\xi'$ and 
	\[
	z'(t)=x'(t)-u'(t)=-f(x(t))=-f(z(t)+\Gamma(t)), \quad t\geq 0.
	\]
	Therefore, we see that if $\Gamma(t)=\int_t^\infty g(s)\,ds$ obeys \eqref{eq.gammaZ}, we can apply Theorem~\ref{thm.ThZ} to $z$ to obtain 
	\[
	\lim_{t\to\infty} \frac{z(t)}{F^{-1}(t)}=\lambda\in \{-1,0,1\}. 
	\] 
	Then, as $\Gamma$ obeys \eqref{eq.gammaZ}, and $x=z+\Gamma$, we have
	\[
	\lim_{t\to\infty} \frac{x(t)}{F^{-1}(t)}
	=\lambda+0=\lambda\in \{-1,0,1\}. 
	\]  
	Therefore we have established the following result.
	\begin{theorem}\label{thm.T1ch3}
		Suppose that $f\in C(\mathbb{R};\mathbb{R})$ is increasing, odd and obeys \eqref{eq.fglobalstable}, \eqref{eq.fasypres}, and \eqref{eq.fasypresmu}. Suppose $F$ is defined by \eqref{def.F}.  Let $g$ be a continuous function such that \eqref{eq.g2} holds and let $\Gamma$ be defined by \eqref{eq:Gamma}. 
		If $\Gamma$ obeys \eqref{eq.gammaZ}, then the continuous solution of \eqref{eq.odepert}  obeys 
		\begin{align*}
			\lim_{t \to \infty} \frac{x(t)}{F^{-1}(t)} =\lambda\in \{-1,0,+1\}.
		\end{align*}
	\end{theorem}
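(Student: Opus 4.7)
The proof essentially reduces Theorem~\ref{thm.T1ch3} to Theorem~\ref{thm.ThZ} via the substitution already sketched in Subsection~\ref{applyThm1}, with the additional ingredient that we must first establish that $x(t)\to 0$ as $t\to\infty$ (which is required implicitly when applying Theorem~\ref{thm.ThZ}). Since $f$ is increasing, odd and obeys \eqref{eq.fglobalstable}, and $g$ is continuous and obeys \eqref{eq.g2}, Theorem~\ref{thm.xto0zto0}(b) is directly applicable: any continuous solution of \eqref{eq.odepert} exists globally, is bounded, and satisfies $x(t)\to 0$ as $t\to\infty$. This is the first step.

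Next, define $u(t)=\int_0^t g(s)\,ds$ with finite limit $u(\infty)$ by \eqref{eq.g2}, and set $\Gamma(t)=u(t)-u(\infty)=-\int_t^\infty g(s)\,ds$, matching the definition in \eqref{eq:Gamma}. This $\Gamma$ is continuous on $[0,\infty)$, and $\Gamma(t)\to 0$ as $t\to\infty$. Put $z(t)=x(t)-\Gamma(t)$; then $z$ is $C^1$, $z(0)=\xi+u(\infty)$, and by direct computation
\[
z'(t)=x'(t)-\Gamma'(t)=-f(x(t))+g(t)-g(t)=-f(x(t))=-f(z(t)+\Gamma(t)),
\]
so $z$ solves \eqref{eq.z} with the new initial datum. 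Since $x(t)\to 0$ and $\Gamma(t)\to 0$, one also has $z(t)\to 0$ as $t\to\infty$.

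By hypothesis, $\Gamma$ obeys \eqref{eq.gammaZ}, so the hypotheses of Theorem~\ref{thm.ThZ} are in force for this internally perturbed ODE. Applying that theorem yields
\[
\lim_{t\to\infty}\frac{z(t)}{F^{-1}(t)}=\lambda\in\{-1,0,+1\}.
\]
Finally, writing $x(t)=z(t)+\Gamma(t)$ and dividing by $F^{-1}(t)$,
\[
\frac{x(t)}{F^{-1}(t)}=\frac{z(t)}{F^{-1}(t)}+\frac{\Gamma(t)}{F^{-1}(t)}\longrightarrow \lambda+0=\lambda,
\]
which is the required conclusion. The substantive work is thus carried entirely by Theorems~\ref{thm.xto0zto0} and~\ref{thm.ThZ}, and the only potential obstacle is the routine verification that the shifted function $z$ satisfies \eqref{eq.z} and enjoys the limit $z(t)\to 0$; both follow immediately once \eqref{eq.g2} and the convergence of $x$ are in place.
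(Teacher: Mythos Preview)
Your proof is correct and follows essentially the same approach as the paper: define $z=x-\Gamma$, verify that $z$ satisfies \eqref{eq.z}, apply Theorem~\ref{thm.ThZ}, and then recover the limit for $x$ from $x=z+\Gamma$ together with \eqref{eq.gammaZ}. The only (welcome) addition is that you explicitly invoke Theorem~\ref{thm.xto0zto0}(b) to justify $x(t)\to 0$, whereas the paper simply assumes this at the outset of the argument.
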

	In the case that the perturbation $\Gamma$ is of the same order of magnitude as the solution of the unperturbed equation \eqref{eq.ode}, namely
	\begin{equation} \label{eq.GammaOFinv}
		\Gamma(t)=O(F^{-1}(t)), \quad t\to\infty,
	\end{equation}
	the following result enables us to establish an upper bound on the rate of decay of $z$. 
	\begin{theorem} \label{ThZbdd}
		Suppose that $f\in C(\mathbb{R};\mathbb{R})$ is increasing, odd and obeys \eqref{eq.fglobalstable}, \eqref{eq.fasypres}, and \eqref{eq.fasypresmu}. Suppose $F$ is defined by \eqref{def.F}.   Let $\Gamma$ be continuous and $z$ be the continuous solution of \eqref{eq.z}. 
		If $\Gamma$ obeys \eqref{eq.GammaOFinv}, then  
		\begin{align*}
			z(t)=O(F^{-1}(t)), \quad t\to\infty.
		\end{align*}
	\end{theorem}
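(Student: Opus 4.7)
The plan is to prove $z(t) = O(F^{-1}(t))$ by a barrier/comparison argument: I will construct an upper envelope $\varphi(t) = A\,F^{-1}(t - T + S)$ with suitable positive constants $A$, $S$, $T$ so that $z(t) \leq \varphi(t)$ for all $t \geq T$, and then obtain a symmetric lower bound via oddness of $f$. To convert the bound $z(t) \leq \varphi(t)$ into $z(t) = O(F^{-1}(t))$, I will invoke the asymptotic preservation \eqref{eq.Finvaspliminf}, which is supplied under our hypotheses by Lemmas \ref{lem.Finvapchar}, \ref{lem.importfapFinv}, and \ref{lem.importfmuapFinv}; it turns a bounded additive shift in the argument of $F^{-1}$ into a bounded multiplicative factor. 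For the preliminaries: by Theorem \ref{thm.xto0zto0}, $z$ is bounded, say $|z(t)| \leq B$; and by hypothesis there exist $K \geq 1$, $T_1 \geq 0$ with $|\Gamma(t)| \leq K F^{-1}(t)$ for $t \geq T_1$.

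The mechanics of the barrier are as follows. Differentiating, $\varphi'(t) = -A\,f(F^{-1}(t-T+S))$. Suppose for contradiction that $t^* > T$ is a first time with $z(t^*) = \varphi(t^*)$ and $z(t) < \varphi(t)$ on $[T, t^*)$; then $(z-\varphi)'(t^*) \geq 0$. Choosing $S < T$ gives $F^{-1}(t^*) \leq F^{-1}(t^*-T+S)$ by monotonicity of $F^{-1}$, so $\varphi(t^*) + \Gamma(t^*) \geq (A-K) F^{-1}(t^*-T+S)$, and monotonicity of $f$ turns the crossing inequality into
\[
 f\bigl((A-K)\,y^*\bigr) \leq A\,f(y^*), \qquad y^* := F^{-1}(t^*-T+S).
\]
I aim to contradict this by exploiting the quantitative strength of \eqref{eq.fasypresmu}.

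This is where the last lemma of Section 3 enters. It gives $\overline{\Phi}_f(\mu) \leq C'\mu/\log(1/\mu)$ for small $\mu$, so substituting $\mu = 1/\nu$ and inverting yields $\liminf_{y \to 0^+} f(\nu y)/f(y) \geq \nu\log(\nu)/C'$ for $\nu$ large. Thus $f$ amplifies dilations strictly superlinearly near $0$. I will pick $A$ so large that $(A-K)\log(A-K)/C' > A$ and $A > B/y_0$, where $y_0 > 0$ is a threshold below which $f((A-K)y) > A\,f(y)$ holds; then select $S$ with $F^{-1}(S) \in (B/A, y_0)$. These choices simultaneously enforce the initial inequality $\varphi(T) = A\,F^{-1}(S) > B \geq z(T)$ and $y^* \leq F^{-1}(S) < y_0$ on $[T, \infty)$, delivering the contradiction. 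The symmetric lower bound follows by running the same argument on $\tilde z := -z$, which by oddness of $f$ satisfies $\tilde z'(t) = -f(\tilde z(t) - \Gamma(t))$ with $|-\Gamma|$ still $O(F^{-1})$.

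The principal obstacle I expect is the coordinated choice of constants: simultaneously arranging the superlinear amplification estimate, the initial inequality at $t = T$, and the requirement $y^* < y_0$ on $[T, \infty)$ via a single choice of $A$ and $S$. A secondary bookkeeping step is converting $|z(t)| \leq A\,F^{-1}(t - T + S)$ into $z(t) = O(F^{-1}(t))$: this requires transferring the multiplicative statement in \eqref{eq.Finvaspliminf} to the additive shift in the argument, using $(t - T + S)/t \to 1$ as $t \to \infty$.
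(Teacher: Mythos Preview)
The paper as supplied does not contain an explicit proof of Theorem~\ref{ThZbdd}; the manuscript breaks off before the proofs of Lemma~\ref{lemma.step2zsimFinv} (STEP~2) and Theorem~\ref{ThZbdd} are given, so there is no line-by-line comparison to make.  That said, your barrier/comparison approach is exactly in the spirit of what the paper announces for the majorisation steps (``constructing functions that are guaranteed to majorise and minorise $z$ \ldots\ rely on a comparison principle based on the differential equation for $t\mapsto z(t)$''), and the overall architecture --- a supersolution $\varphi(t)=A\,F^{-1}(t-T+S)$, a first-crossing contradiction driven by the superlinear amplification $f((A-K)y)>A f(y)$ near $0$, and a symmetric argument for $-z$ --- is correct.

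There is one genuine bookkeeping gap in your constant selection.  You write that you will choose $A$ so large that both $(A-K)\log(A-K)/C'>A$ \emph{and} $A>B/y_0$, where $y_0$ is the threshold below which $f((A-K)y)>A f(y)$.  But $y_0$ depends on $A$ (it is determined by the $\liminf$ statement after $A$ is fixed), so the second requirement is circular as stated: you have not shown that $A\,y_0(A)>B$ is achievable.  The clean fix is already at hand in Theorem~\ref{thm.xto0zto0}: you invoked it only for the bound $|z|\le B$, but it also gives $z(t)\to 0$.  Reorder the choices as follows: (i) fix $A$ with $(A-K)\log(A-K)/C'>A$, obtaining $y_0=y_0(A)>0$; (ii) fix $S$ with $0<F^{-1}(S)<y_0$; (iii) use $z(t)\to 0$ to pick $T>\max(S,T_1)$ so large that $|z(T)|<A\,F^{-1}(S)$.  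This decouples the initial inequality $z(T)<\varphi(T)$ from the global bound $B$ and removes the circularity entirely.  Your final step --- converting $|z(t)|\le A\,F^{-1}(t-T+S)$ to $z(t)=O(F^{-1}(t))$ via \eqref{eq.Finvaspliminf} and $(t-T+S)/t\to 1$ --- is fine; since $S<T$ one has $t-T+S>(1-\epsilon)t$ for large $t$, and \eqref{eq.Finvaspliminf} yields $\limsup_{t\to\infty} F^{-1}((1-\epsilon)t)/F^{-1}(t)=1/\underline{\Phi}_F(\epsilon')<\infty$.
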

	Since $x=z+\Gamma$, this immediately translates to the give a bound on the decay rate of solutions of \eqref{eq.odepert}.
	\begin{theorem}\label{Txbdd}
		Suppose that $f\in C(\mathbb{R};\mathbb{R})$ is increasing, odd and obeys \eqref{eq.fglobalstable}, \eqref{eq.fasypres}, and \eqref{eq.fasypresmu}. Suppose $F$ is defined by \eqref{def.F}.  Let $g$ be a continuous function such that \eqref{eq.g2} holds and let $\Gamma$ be defined by \eqref{eq:Gamma}. 
		If $\Gamma$ obeys \eqref{eq.GammaOFinv}, then the continuous solution of \eqref{eq.odepert}  obeys 
		\begin{align*}
			x(t)=O(F^{-1}(t)), \quad t\to\infty. 
		\end{align*}
	\end{theorem}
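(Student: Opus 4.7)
The plan is to deduce this bound directly by the same reduction device described in the paragraph preceding the theorem, using Theorem~\ref{ThZbdd} as the essential input. The structure will mirror the derivation of Theorem~\ref{thm.T1ch3} from Theorem~\ref{thm.ThZ}, but replacing the pointwise limit by the $O(\cdot)$ bound.

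First, because $g$ is continuous and \eqref{eq.g2} holds, the function $\Gamma(t) = -\int_t^\infty g(s)\,ds$ defined in \eqref{eq:Gamma} is continuous with $\Gamma(t) \to 0$ as $t \to \infty$, and $\Gamma'(t) = g(t)$. By Theorem~\ref{thm.xto0zto0}(b), the continuous solution $x$ of \eqref{eq.odepert} is global and obeys $x(t)\to 0$ as $t\to\infty$. Set $z(t) := x(t) - \Gamma(t)$ for $t \geq 0$. Then $z$ is continuous, $z(t)\to 0$ as $t\to\infty$, and
\[
z'(t) = x'(t) - \Gamma'(t) = -f(x(t)) + g(t) - g(t) = -f(z(t) + \Gamma(t)), \quad t > 0,
\]
so $z$ is a continuous solution of \eqref{eq.z} with $z(0) = \xi - \Gamma(0) = \xi + \int_0^\infty g(s)\,ds$.

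Next, the hypothesis that $\Gamma$ obeys \eqref{eq.GammaOFinv} is exactly the input needed for Theorem~\ref{ThZbdd}. Applying that theorem to $z$ yields
\[
z(t) = O(F^{-1}(t)), \quad t \to \infty.
\]

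Finally, since $x(t) = z(t) + \Gamma(t)$ and both summands are $O(F^{-1}(t))$ as $t \to \infty$ (the first by the previous display, the second by assumption \eqref{eq.GammaOFinv}), we conclude
\[
x(t) = O(F^{-1}(t)), \quad t \to \infty,
\]
as required. There is no genuine obstacle: all the analytical difficulty—constructing upper and lower solutions to the internally perturbed equation \eqref{eq.z} under the asymptotic preserving hypotheses \eqref{eq.fasypres} and \eqref{eq.fasypresmu}—is absorbed in Theorem~\ref{ThZbdd}, and the present statement is a mechanical consequence of the decomposition $x = z + \Gamma$.
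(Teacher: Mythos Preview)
Your proof is correct and follows exactly the approach the paper itself indicates: the sentence ``Since $x=z+\Gamma$, this immediately translates to give a bound on the decay rate of solutions of \eqref{eq.odepert}'' preceding Theorem~\ref{Txbdd} is precisely the reduction you have written out, mirroring the derivation of Theorem~\ref{thm.T1ch3} from Theorem~\ref{thm.ThZ} in Section~\ref{applyThm1}. There is nothing to add.
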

	
	\section{Proof of Results for Internally Perturbed Equation}
	\subsection{Proof of Theorem~\ref{thm.ThZ}} 
	Theorem~\ref{thm.ThZ} is the key underlying result of this paper, and its proof relies on careful asymptotic analysis, and a number of interlinked intermediate results. Accordingly, we take a moment to summarise the structure of the proof. 
	
	The proof involves a successive ``ratcheting'' of the asymptotic results: the last two steps of the proof in particular rely on constructing functions 
	that are guaranteed to majorise and minorise $z$ for sufficiently large $t$. Both majorisation and minorisation results rely on a comparison principle based on the differential equation for $t\mapsto z(t)$. Good initial conditions for these comparison arguments (for large values of the time argument) are generated by developing sharp a priori bounds on the liminf and limsup of $t\mapsto |z(t)|/F^{-1}(t)$. In particular, we prove the result through the following steps:
	\begin{itemize}
		\item[STEP 1a:] $\liminf_{t\to\infty} |z(t)|/F^{-1}(t)<+\infty$;
		\item[STEP 1b:] $\liminf_{t\to\infty} |z(t)|/F^{-1}(t)\leq 1$;
		\item[STEP 2:] $\limsup_{t\to\infty} |z(t)|/F^{-1}(t)\leq 1$;
		\item[STEP 3:] If $\limsup_{t\to\infty} |z(t)|/F^{-1}(t)>0$, then $\limsup_{t\to\infty} |z(t)|/F^{-1}(t)=1$; therefore $\limsup_{t\to\infty} |z(t)|/F^{-1}(t)=0$ or $1$;
		\item[STEP 4:] If $\limsup_{t\to\infty} |z(t)|/F^{-1}(t)=1$, then $\liminf_{t\to\infty} |z(t)|/F^{-1}(t)=1$.
	\end{itemize}
	Of course, it can be seen that STEPs 3 and 4 together imply that the limit of $t\mapsto z(t)/F^{-1}(t)$ must exist and be 0, -1 or 1, which is the desired result.
	
	In each of the following results, we assume that $f$ is increasing, odd, in $C((0,\infty);(0,\infty))$ and obeys \eqref{eq.fasypres} and \eqref{eq.fasypresmu}; as a consequence $F$ and $F^{-1}$ are both asymptotic preserving. We do not restate these hypotheses on $f$ and $F$ in the forthcoming results in this section. Likewise, $\Gamma$ is tacitly assumed to obey \eqref{eq.gammaZ} in this subsection (in the next subsection, when we prove Theorem~\ref{thm.T1ch3} we will assume $\Gamma$ obeys \eqref{eq.GammaOFinv}).
	
	In this lemma, we prove STEP 1a and STEP 1b of the above programme.  
	\begin{lemma} \label{lemma.step1zsimFinv}
		Let $z$ be a continuous solution of \eqref{eq.z}. Then
		\begin{itemize}
			\item[(i)] 
			\[
			\liminf_{t\to\infty} \frac{|z(t)|}{F^{-1}(t)}<+\infty.
			\] 
			\item[(ii)] 
			\[
			\liminf_{t\to\infty} \frac{|z(t)|}{F^{-1}(t)}\leq 1.
			\] 
		\end{itemize}	
	\end{lemma}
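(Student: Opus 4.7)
The plan is to reduce to the case of eventually constant-sign $z$ and then compare $z$ to the solution of the unperturbed ODE. Since $z$ is continuous with $z(t)\to 0$ as $t\to\infty$, if $z$ vanishes at a sequence $t_n\to\infty$ then $\liminf_{t\to\infty} |z(t)|/F^{-1}(t)=0$, and both (i) and (ii) hold trivially. Hence we may assume $z$ has constant sign for $t\geq T_0$, and by the oddness of $f$ we reduce to $z(t)>0$ on $[T_0,\infty)$.

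For part (i), I argue by contradiction: suppose $z(t)/F^{-1}(t)\to +\infty$. Since $|\Gamma(t)|/F^{-1}(t)\to 0$ by \eqref{eq.gammaZ}, it follows that $|\Gamma(t)|/z(t)\to 0$, so $z+\Gamma=z(1+o(1))$ is eventually positive. Combining the asymptotic preserving property of $f$ in \eqref{eq.fasypres} with the complementary upper bound on $f((1+\epsilon)x)/f(x)$ obtained by the substitution $x\mapsto (1+\epsilon)x$ (which also tends to $1$), one shows $f(z+\Gamma)/f(z)\to 1$, so that $-z'(t)/f(z(t))\to 1$. Integrating yields
\[
F(z(t))-F(z(T_0))=\int_{T_0}^{t}\frac{-z'(s)}{f(z(s))}\,ds \sim t-T_0,
\]
hence $F(z(t))/t\to 1$. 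By Lemmas~\ref{lem.importfapFinv} and~\ref{lem.importfmuapFinv}, applied through Lemma~\ref{lem.Finvapchar}, $F^{-1}$ is asymptotic preserving, and this upgrades $F(z(t))\sim t$ to $z(t)\sim F^{-1}(t)$, contradicting $z/F^{-1}\to\infty$.

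For part (ii), suppose $\liminf z(t)/F^{-1}(t)>1$, i.e., there exists $\delta>0$ with $z(t)\geq (1+\delta)F^{-1}(t)$ eventually. Again $|\Gamma|/z\to 0$, so for any small $\epsilon>0$ we have $z+\Gamma\geq (1-\epsilon)z>0$ for $t\geq T_\epsilon$. Monotonicity of $f$ and the ODE give $-z'(t)\geq f((1-\epsilon)z(t))$; the substitution $u=(1-\epsilon)z(s)$ (valid since $-z'>0$ eventually, so $z$ is monotone) produces
\[
F((1-\epsilon)z(t))\geq (1-\epsilon)(t-T_\epsilon)+F((1-\epsilon)z(T_\epsilon)),\quad t\geq T_\epsilon,
\]
hence $z(t)\leq (1-\epsilon)^{-1}F^{-1}((1-\epsilon)t+c_\epsilon)$ for a constant $c_\epsilon$. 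Taking $\limsup$ and using the asymptotic preservation of $F^{-1}$ to compare $F^{-1}((1-\epsilon)t+c_\epsilon)$ with $F^{-1}(t)$, we bound $\limsup z/F^{-1}$ by $(1-\epsilon)^{-1}/\underline{\Phi}_F(\epsilon/(1-\epsilon))$. Letting $\epsilon\to 0^+$ forces $\limsup z/F^{-1}\leq 1<1+\delta$, the desired contradiction. The hardest part is coordinating the various asymptotic preserving estimates---converting the $\liminf/\limsup$ equalities of \eqref{eq.fasypres}, \eqref{eq.fasypresmu}, \eqref{eq.Finvaspliminf} and \eqref{eq.Finvasplimsup} into honest uniform-in-$t$ inequalities for $t$ large, and threading a single $\epsilon$ through the $f$-level and $F^{-1}$-level bounds so that all error constants tighten to $1$ simultaneously as $\epsilon\to 0$.
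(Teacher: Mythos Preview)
Your proof is correct. Part~(i) is essentially identical to the paper's argument: reduce to eventually positive $z$, use $\Gamma=o(z)$ and the asymptotic-preserving property of $f$ to get $-z'/f(z)\to 1$, integrate to $F(z(t))\sim t$, and invoke asymptotic preservation of $F^{-1}$ for the contradiction.

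For part~(ii) the paper takes a shorter route than you do. It simply observes that the hypothesis $\liminf_{t\to\infty}|z(t)|/F^{-1}(t)>1$ again forces $z$ to have eventual constant sign and $\Gamma(t)=o(|z(t)|)$, so one is \emph{literally} back in the situation of part~(i): hence $z'(t)\sim -f(z(t))$, asymptotic integration gives $F(z(t))\sim t$, and the asymptotic-preserving property of $F^{-1}$ yields $|z(t)|\sim F^{-1}(t)$, i.e.\ the limit equals $1$, immediately contradicting $\liminf>1$. Your approach instead extracts only the one-sided differential inequality $-z'\geq f((1-\epsilon)z)$, integrates it to the explicit upper bound $z(t)\leq(1-\epsilon)^{-1}F^{-1}((1-\epsilon)t+c_\epsilon)$, and then squeezes $\epsilon\to 0^+$ using the $F^{-1}$ estimates. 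This is perfectly valid but duplicates effort: the two-sided asymptotic relation is already available from part~(i) at no extra cost and delivers the sharper conclusion (limit $=1$, not merely $\limsup\leq 1$) in a single stroke, avoiding the bookkeeping with $c_\epsilon$ and the $\underline{\Phi}_F$ factor that you rightly flag as the fiddly part.
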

	\begin{proof}
		Part (i). Suppose to the contrary that $\liminf_{t\to\infty} |z(t)|/F^{-1}(t)=+\infty$. Then either $z(t)>0$ for all $t$ sufficiently large or $z(t)<0$ for all $t$ sufficiently large. Moreover, as $\Gamma$ obeys \eqref{eq.gammaZ}, we have that $\Gamma(t)=o(|z(t)|)$ as $t\to\infty$. Since $f$ is asymptotic preserving, it therefore follows that $f(z(t)+\Gamma(t))\sim f(z(t))$ as $t\to\infty$ (since $f$ is odd, it does not matter which side of zero the argument of $f$ is). Hence from \eqref{eq.z} we get
		\begin{equation} \label{eq.zprfzto-1}
			\lim_{t\to\infty} \frac{z'(t)}{f(z(t))}=-1.
		\end{equation}
		
		In the case that $z(t)>0$ for all $t$ sufficiently large, by asymptotic integration, we obtain 
		\[
		\lim_{t\to\infty} \frac{F(z(t))}{t}=1.
		\]
		Since $F^{-1}$ is asymptotic preserving, it follows that $z(t)\sim F^{-1}(t)$ as $t\to\infty$, but this contradicts the supposition that $z(t)/F^{-1}(t)\to \infty$ as $t\to\infty$.
		
		In the case when $z(t)<0$ for all $t$ sufficiently large, let $z_-(t)=-z(t)$. Then $z_-(t)>0$ for all $t$ sufficiently large, and by the fact that $f$ is odd, we get 
		\[
		\frac{z_-'(t)}{f(z_-(t))}=\frac{-z'(t)}{f(-z(t))}\to -1, \quad t\to\infty,
		\]   
		using \eqref{eq.zprfzto-1}. Asymptotic integration gives once again 
		$F(z_-(t))\sim t$ as $t\to\infty$, and the asymptotic preserving property of $F^{-1}$ yields $z_-(t)\sim F^{-1}(t)$ as $t\to\infty$. Hence $z(t)\sim -F^{-1}(t)$ as $t\to\infty$, which contradicts the supposition that 
		$z(t)/F^{-1}(t)\to -\infty$ as $t\to\infty$.
		
		Hence, both cases possible when $\liminf_{t\to\infty} |z(t)|/F^{-1}(t)=+\infty$ lead to contradictions, and so the liminf must be finite; this completes the proof of part (i).
		
		Part (ii). Assume by way of contradiction that  $\liminf_{t\to\infty} |z(t)|/F^{-1}(t)=:\lambda>1$. Once again this implies that either $z(t)>0$ for all $t$ sufficiently large or $z(t)<0$ for all $t$ sufficiently large. Moreover, as $\Gamma$ obeys \eqref{eq.gammaZ}, we have that $\Gamma(t)=o(|z(t)|)$ as $t\to\infty$.
		Hence we are in the same situation as in the proof of part (i), and can show that $z'(t)\sim -f(z(t))$ as $t\to\infty$. From this asymptotic relation, we can show, in a manner identical to part (i), that $|z(t)|\sim F^{-1}(t)$ as $t\to\infty$ by considering the (only possible) cases when $z(t)$ is positive or negative for sufficiently large $t$. But the conclusion $\lim_{t\to\infty} |z(t)|/F^{-1}(t)=1$ is incompatible with the supposition that $\lambda>1$, and so this initial supposition must be false. Hence $\lambda=\liminf_{t\to\infty} |z(t)|/F^{-1}(t)\leq 1$ as desired. 
	\end{proof}
	
	In this lemma we prove STEP 2 of the programme.
	\begin{lemma} \label{lemma.step2zsimFinv}
		Let $z$ be a continuous solution of \eqref{eq.z}. Then
		\[
		\Lambda:=\limsup_{t\to\infty} \frac{|z(t)|}{F^{-1}(t)}
		\] 
		obeys $\Lambda\leq 1$.   	
	\end{lemma}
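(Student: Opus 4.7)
\emph{Proof plan.} The plan is to argue by contradiction: suppose $\Lambda>1$, and fix some $\lambda\in(1,\Lambda)$. By Lemma~\ref{lemma.step1zsimFinv}(ii) the function $t\mapsto |z(t)|/F^{-1}(t)$ satisfies $\liminf \leq 1 < \lambda$ and, by hypothesis, exceeds $\lambda$ on some sequence going to infinity. Continuity then lets us extract, for each large $n$, an interval $[r_n,s_n]$ with $r_n\to\infty$ such that $|z(r_n)|/F^{-1}(r_n)=\lambda$, $|z(s_n)|/F^{-1}(s_n)>\lambda$, and $|z(t)|/F^{-1}(t)\geq\lambda$ on $[r_n,s_n]$ (taking $r_n$ to be the last $\lambda$-crossing before $s_n$). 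Since $|z(t)|\geq\lambda F^{-1}(t)>0$ on this interval, $z$ keeps a constant sign; passing to a subsequence (and invoking oddness of $f$ to reduce the opposite case) we may assume $z>0$ throughout $[r_n,s_n]$, so in particular $z(r_n)=\lambda F^{-1}(r_n)$.

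On each such interval, \eqref{eq.gammaZ} gives $|\Gamma(t)|/z(t)\leq |\Gamma(t)|/(\lambda F^{-1}(t))\to 0$ uniformly as $r_n\to\infty$, so the asymptotic preserving properties \eqref{eq.fasypres} and \eqref{eq.fasypresmu} of $f$ force $f(z(t)+\Gamma(t))/f(z(t))\to 1$ uniformly on $[r_n,s_n]$ as $n\to\infty$. Since
\[
\frac{d}{dt}F(z(t)) \;=\; -\frac{z'(t)}{f(z(t))} \;=\; \frac{f(z(t)+\Gamma(t))}{f(z(t))},
\]
for any prescribed $\delta>0$ and all sufficiently large $n$ this yields the lower estimate
\[
F(z(s_n))-F(z(r_n)) \;\geq\; (1-\delta)(s_n-r_n).
\]

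The opposing upper estimate will come from explicit comparison with $G(t):=F(\lambda F^{-1}(t))$. Monotonicity of $F$ and $z(s_n)\geq\lambda F^{-1}(s_n)$ give $F(z(s_n))\leq G(s_n)$, while $F(z(r_n))=G(r_n)$. Using $(F^{-1})'(t)=-f(F^{-1}(t))$, a direct computation yields
\[
G'(t) \;=\; \lambda\,\frac{f(F^{-1}(t))}{f(\lambda F^{-1}(t))}.
\]
Setting $y=\lambda F^{-1}(t)\to 0^+$, the right-hand side is $\lambda\cdot f((1/\lambda)y)/f(y)$, and \eqref{eq.fasypresmu} furnishes
\[
\limsup_{t\to\infty} G'(t) \;\leq\; \lambda\,\overline{\Phi}_f(1/\lambda) \;<\; 1,
\]
so there exists $\kappa>0$ with $G'(t)\leq 1-\kappa$ for all $t$ large, hence $G(s_n)-G(r_n)\leq (1-\kappa)(s_n-r_n)$ for large $n$.

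Chaining the inequalities gives $(1-\delta)(s_n-r_n)\leq (1-\kappa)(s_n-r_n)$; choosing $\delta<\kappa$ at the outset and recalling $s_n>r_n$ produces the required contradiction, so $\Lambda\leq 1$. The main obstacle is precisely the production of the gap $\kappa>0$ in $G'$: this is where the strict inequality $\overline{\Phi}_f(\mu)<\mu$ in \eqref{eq.fasypresmu} is indispensable, and it explains why mere asymptotic preservation \eqref{eq.fasypres} (which yields the limit $1$ but not a quantitative gap) is insufficient to push the limsup down to $1$.
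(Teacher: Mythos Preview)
The paper states Lemma~\ref{lemma.step2zsimFinv} but does not include a proof of it in the text provided, so a direct comparison is impossible. Your argument is correct and self-contained: the contradiction via the last-crossing times $r_n$, the computation of $\frac{d}{dt}F(z(t))=f(z+\Gamma)/f(z)$, and the key estimate $\limsup_{t\to\infty}G'(t)=\lambda\,\overline{\Phi}_f(1/\lambda)<1$ from \eqref{eq.fasypresmu} are all sound, and the chain $(1-\delta)(s_n-r_n)\leq F(z(s_n))-F(z(r_n))\leq G(s_n)-G(r_n)\leq (1-\kappa)(s_n-r_n)$ closes cleanly once $\delta<\kappa$ is fixed first.

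It is worth noting that the paper's roadmap (in the paragraph preceding the STEP list) describes STEPS~1--2 as producing \emph{a priori} bounds that serve as initial data for the comparison/majorisation arguments reserved for STEPS~3--4; your proof of STEP~2 is itself already a comparison argument, using $t\mapsto \lambda F^{-1}(t)$ as an implicit supersolution on excursion intervals. This is a perfectly legitimate route---indeed it makes very transparent exactly where the strict inequality $\overline{\Phi}_f(\mu)<\mu$ enters---but it is methodologically closer to what the authors signal for the later steps than to the integral-estimate style visible in their proof of Lemma~\ref{lemma.step3zsimFinv}. One small point of presentation: the reduction to $z>0$ via oddness should be stated as replacing $(z,\Gamma)$ by $(-z,-\Gamma)$, since $-z$ solves $(-z)'=-f((-z)+(-\Gamma))$ and $-\Gamma$ still obeys \eqref{eq.gammaZ}.
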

	\begin{proof}
		Since $f$ obeys \eqref{eq.fasypres}, for every $\epsilon\in (0,1)$ sufficiently small, there exists $x_1(\epsilon)<1$ such that 
		\begin{equation}   \label{eq.epsfasypres}
			\frac{f((1-\epsilon)x)}{f(x)}> \underline{\Phi}_f(\epsilon)(1-\epsilon), \quad x<x_1(\epsilon).
		\end{equation}
		Now, choose 
		\[
		\lambda_\epsilon=\min(\underline{\Phi}_f(\epsilon)(1-\epsilon),1/(1+\epsilon)).
		\]
		Notice that $\lambda_\epsilon<1$ for all $\epsilon\in (0,1)$ sufficiently small, and that $\lambda_\epsilon\to 1$ as $\epsilon\to 0^+$. 
		
		Let $T_1(\epsilon)>0$ be so small so that 
		\begin{equation} \label{eq.T1}
			F^{-1}(\lambda_\epsilon t)<x_1(\epsilon), \quad t\geq T_1(\epsilon).
		\end{equation}
		Since $\Gamma(t)=o(F^{-1}(t))$ as $t\to\infty$, and $F^{-1}$ is asymptotic preserving, for every $\epsilon\in (0,1)$ sufficiently small, there exists $T_2(\epsilon)>0$ such that
		\begin{equation} \label{eq.Gammabound}
			|\Gamma(t)|<\epsilon F^{-1}(\lambda_\epsilon t), \quad t\geq T_2(\epsilon).
		\end{equation}
		
		Since $F^{-1}$ obeys \eqref{eq.Finvasplimsup}, 
		for every 
		$\epsilon>0$ and $\eta\in (0,1)$ there exists $\tilde{T}_3(\eta,\epsilon)>0$ such that 
		\[
		\frac{F^{-1}((1+\epsilon)t)}{F^{-1}(t)}<\overline{\Phi}_F(\epsilon)(1+\eta), \quad t\geq \tilde{T}_3(\eta,\epsilon).
		\]
		Therefore, setting $\tilde{T}_4(\eta,\epsilon)=(1+\epsilon) \tilde{T}_3(\eta,\epsilon)$, we have 
		\[
		\frac{F^{-1}(\frac{1}{1+\epsilon}t)}{F^{-1}(t)}>\frac{1}{\overline{\Phi}_F(\epsilon)}\frac{1}{1+\eta}, \quad t\geq \tilde{T}_4(\eta,\epsilon).
		\]
		Note that $\overline{\Phi}_F(\epsilon)<1$ by \eqref{eq.Finvasplimsup}, so set 
		\[
		\eta=\eta(\epsilon)=\min\left(\frac{1}{2}\left(\frac{1}{\overline{\Phi}_F(\epsilon)}-1\right),\frac{\epsilon}{2} \right),
		\]
		and let $T_4(\epsilon)=\tilde{T}_4(\eta(\epsilon),\epsilon)$. Since   $\lambda_\epsilon\leq 1/(1+\epsilon)$, we have for $t\geq T_4(\epsilon)$  that
		\begin{equation} \label{eq.boundforic}
			\frac{F^{-1}(\lambda_\epsilon t)}{F^{-1}(t)}\geq 
			\frac{F^{-1}(\frac{1}{1+\epsilon}t)}{F^{-1}(t)}>
			\frac{1}{\overline{\Phi}_F(\epsilon)}\frac{1}{1+\eta(\epsilon)}=: \pi(\epsilon).
		\end{equation}
		The construction of $\eta(\epsilon)$ ensures that $\pi(\epsilon)>1$. However, since $\overline{\Phi}_F(\epsilon)\to 1$ as $\epsilon\to 0^+$, we have that $\eta(\epsilon)\to 0$ as $\epsilon\to 0$, so $\pi(\epsilon)\to 1$ as $\epsilon\to 0^+$.  
		
		Next, we have $\liminf_{t\to\infty} |z(t)|/F^{-1}(t)\leq 1$. Therefore, as $\pi(\epsilon)>1$ and $\pi(\epsilon)\to 1$ as $\epsilon\to 0^+$, for every $\epsilon>0$ sufficiently small, there exists a sequence $(t_n)$ with $t_n\nearrow\infty$ such that 
		\begin{equation} \label{eq.zliminfseq}
			|z(t_n)|<\pi(\epsilon) F^{-1}(t_n).
		\end{equation}
		Next, let $T(\epsilon)=\min\{t_n: t_n>\max(T_1,T_2,T_4)\}$ and define 
		\begin{equation} \label{eq.icforlimsup}
			z_+(t)=F^{-1}(\lambda_\epsilon t), \quad t\geq T.
		\end{equation}
		Then, since $T=t_n$ for some $n$, we have by  \eqref{eq.zliminfseq} and 
		\eqref{eq.boundforic} that 
		\[
		|z(T)|<\pi(\epsilon)F^{-1}(T)<F^{-1}(\lambda_\epsilon T)=z_+(T).
		\]
		For $t\geq T$, we have by \eqref{eq.Gammabound} and the definition of $z_+$
		\begin{align*}
			f(z_+(t)+\Gamma(t))>f(z_+(t)-\epsilon F^{-1}(\lambda_\epsilon t))
			=f((1-\epsilon)z_+(t)),
		\end{align*}
		so by \eqref{eq.T1} and \eqref{eq.epsfasypres}, we have 
		\[
		f(z_+(t)+\Gamma(t))>f((1-\epsilon)z_+(t))>\underline{\Phi}_f(\epsilon)(1-\epsilon)f(z_+(t))\geq \lambda_\epsilon f(z_+(t)),
		\]
		the last inequality coming from the definition of $\lambda_\epsilon$. 
		Now, since $F(z_+(t))=\lambda_\epsilon t$, we have that $z_+'(t)=-\lambda_\epsilon f(z_+(t))$. Hence 
		\[
		z_+'(t)+f(z_+(t)+\Gamma(t))> -\lambda_\epsilon f(z_+(t)) +\lambda_\epsilon f(z_+(t))=0.
		\]
		Hence $z_+'(t)>-f(z_+(t)+\Gamma(t))$ for all $t\geq T(\epsilon)$. Using this, $|z(T)|<z_+(T)$ and \eqref{eq.z}, we see that $z(t)< z_+(t)$ for all $t\geq T$. Therefore
		\[
		\frac{z(t)}{F^{-1}(t)}\leq \frac{F^{-1}(\lambda_\epsilon t)}{F^{-1}(t)}, \quad t\geq T.
		\] 
		Since $\lambda_\epsilon<1$ and $\lambda_\epsilon\to 1$ as $\epsilon\to 0^+$, we have that 
		\begin{align*}
			\limsup_{t\to\infty}\frac{z(t)}{F^{-1}(t)}&\leq \limsup_{t\to\infty} \frac{F^{-1}(\lambda_\epsilon t)}{F^{-1}(t)}\\
			&=\frac{1}{\liminf_{t\to\infty} \frac{F^{-1}(t)}{F^{-1}(\lambda_\epsilon t)}}\\
			&=\frac{1}{\liminf_{t\to\infty} \frac{F^{-1}(\lambda_\epsilon^{-1} t)}{F^{-1}(t)}}
			=\frac{1}{\underline{\Phi}_F(1/\lambda_\epsilon-1)},
		\end{align*}
		where we used \eqref{eq.Finvaspliminf} at the last step. By \eqref{eq.Finvaspliminf}, $\underline{\Phi}_F(1/\lambda_\epsilon-1)\to 1$ 
		as $\epsilon\to 0^+$, so we have 
		\begin{equation} \label{eq.limsupzlt1}
			\limsup_{t\to\infty}\frac{z(t)}{F^{-1}(t)}\leq 1.
		\end{equation}
		
		Now, consider $z_-(t)=-z_+(t)$ for $t\geq T$. By \eqref{eq.icforlimsup}, we have $z(T)>z_-(T)$. Also, for $t\geq T$, we get 
		\begin{align*}
			f(z_-(t)+\Gamma(t))&<f(z_-(t)+\epsilon F^{-1}(\lambda_\epsilon t))\\
			&=f(-z_+(t)+\epsilon F^{-1}(\lambda_\epsilon t))\\
			&=f(-z_+(t)+\epsilon z_+(t))=-f((1-\epsilon)z_+(t)).
		\end{align*}
		Therefore
		\begin{align*}
			z_-'(t)+f(z_-(t)+\Gamma(t))&=-z_+'(t)+f(z_-(t)+\Gamma(t))\\
			&=\lambda_\epsilon f(z_+(t))+f(z_-(t)+\Gamma(t))\\
			&<\lambda_\epsilon f(z_+(t))-f((1-\epsilon)z_+(t))\\
			&\leq \underline{\Phi}_f(\epsilon)(1-\epsilon) f(z_+(t)) - f((1-\epsilon)z_+(t))< 0.
		\end{align*}
		Hence $z_-'(t)<-f(z_-(t)+\Gamma(t)))$ for $t\geq T$, $z(T)>z_-(T)$ and $z$ obeys \eqref{eq.z}. Thus $z(t)>z_-(t)=-z_+(t)$ for $t\geq T$. Therefore, we have 
		\[
		\liminf_{t\to\infty} \frac{z(t)}{F^{-1}(t)}
		\geq 
		\liminf_{t\to\infty} \frac{-F^{-1}(\lambda_\epsilon t)}{F^{-1}(t)}
		=-\frac{1}{\underline{\Phi}_F(1/\lambda_\epsilon-1)},
		\]
		and letting $\epsilon\to 0^+$ yields 
		\[
		\liminf_{t\to\infty} \frac{z(t)}{F^{-1}(t)}\geq -1.
		\]
		Thus, using this limit and \eqref{eq.limsupzlt1} gives $\limsup_{t\to\infty} |z(t)|/F^{-1}(t)\leq 1$, as required. 
	\end{proof}
	
	In this lemma, we prove STEP 3 of the programme.  
	\begin{lemma} \label{lemma.step3zsimFinv}
		Let $z$ be a continuous solution of \eqref{eq.z}. Then
		\[
		\Lambda:=\limsup_{t\to\infty} \frac{|z(t)|}{F^{-1}(t)}<+\infty.
		\] 
		obeys $\Lambda=0$ or $\Lambda\geq 1$.  	
	\end{lemma}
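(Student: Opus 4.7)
The plan is to argue by contradiction: suppose $0<\Lambda<1$ and show the ODE for $z$, together with our hypotheses on $f$, forces $|z(t)|/F^{-1}(t)\to 1$, violating $\Lambda<1$. The strategy mirrors Lemma~\ref{lemma.step1zsimFinv}: a barrier (comparison) argument traps $z$ on one side of zero with a positive lower bound proportional to $F^{-1}$, and then asymptotic integration pins down the rate exactly.

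Since $\Lambda>0$, pick $t_n\to\infty$ with $|z(t_n)|/F^{-1}(t_n)\to\Lambda$. After passing to a subsequence, either $z(t_n)>0$ or $z(t_n)<0$ for all $n$; by the oddness of $f$, the latter reduces to the former on considering $\tilde z:=-z$, which obeys $\tilde z'=-f(\tilde z-\Gamma)$, with $-\Gamma$ still satisfying \eqref{eq.gammaZ}. Assume $z(t_n)>0$. Fix $\beta\in(0,\Lambda)$ and set $\phi(t):=\beta F^{-1}(t)$. The key auxiliary inequality is
\begin{equation}\label{eq.barrierplan}
\beta f(F^{-1}(t))> f(\beta F^{-1}(t)+\Gamma(t)),\qquad t\geq T,
\end{equation}
for some $T=T(\beta)$. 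To verify \eqref{eq.barrierplan} one uses (i) \eqref{eq.gammaZ}, which gives $(\beta F^{-1}(t)+\Gamma(t))/(\beta F^{-1}(t))\to 1$; (ii) the asymptotic-preserving property \eqref{eq.fasypres} of $f$, which yields $f(\beta F^{-1}(t)+\Gamma(t))/f(\beta F^{-1}(t))\to 1$; and (iii) the strict gap $\limsup_{t\to\infty} f(\beta F^{-1}(t))/f(F^{-1}(t))\le\overline{\Phi}_f(\beta)<\beta$ supplied by \eqref{eq.fasypresmu}.

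Granted \eqref{eq.barrierplan}, a standard first-touching argument yields $z(t)>\phi(t)$ for all $t\geq t_n$, provided $n$ is chosen so that $t_n\geq T$ and $z(t_n)>\beta F^{-1}(t_n)$ (possible since $z(t_n)/F^{-1}(t_n)\to\Lambda>\beta$). Indeed, if $t^*:=\inf\{t>t_n:z(t)\le\phi(t)\}$ were finite, $z(t^*)=\phi(t^*)$ would force $z'(t^*)\le\phi'(t^*)$, whereas
$$
z'(t^*)-\phi'(t^*)=\beta f(F^{-1}(t^*))-f(\phi(t^*)+\Gamma(t^*))>0
$$
by \eqref{eq.barrierplan}, a contradiction. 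Hence $z(t)>\beta F^{-1}(t)>0$ for all $t\ge t_n$, so $\Gamma(t)/z(t)\leq \beta^{-1}\Gamma(t)/F^{-1}(t)\to 0$. The asymptotic-integration step of Lemma~\ref{lemma.step1zsimFinv} now applies verbatim: \eqref{eq.fasypres} gives $f(z+\Gamma)/f(z)\to 1$, so $(F\circ z)'(t)=-z'(t)/f(z(t))\to 1$, integrating to $F(z(t))/t\to 1$; the asymptotic-preserving property of $F^{-1}$ (available from the hypotheses on $f$ via Lemma~\ref{lem.Finvapchar} together with Lemmas~\ref{lem.importfapFinv} and \ref{lem.importfmuapFinv}) then forces $z(t)/F^{-1}(t)\to 1$, contradicting $\Lambda<1$.

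The principal obstacle is establishing \eqref{eq.barrierplan} uniformly on a tail: hypothesis \eqref{eq.fasypresmu} supplies only a limsup-level gap $\overline{\Phi}_f(\beta)<\beta$, and this gap must be preserved under the multiplicative $(1+o(1))$ perturbation of the argument of $f$ induced by $\Gamma$. This is exactly why both \eqref{eq.fasypres} and \eqref{eq.fasypresmu} must be invoked in tandem: \eqref{eq.fasypresmu} provides the strict separation between $f(\beta F^{-1})$ and $\beta f(F^{-1})$, while \eqref{eq.fasypres} absorbs the $\Gamma$-perturbation of the argument without collapsing that separation.
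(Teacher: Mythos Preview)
Your proof is correct, but it takes a genuinely different route from the paper's. The paper argues as follows: assuming $\Lambda\in(0,1)$, it first writes the integral representation $z(t)=\int_t^\infty f(z(s)+\Gamma(s))\,ds$, bounds $|z(s)+\Gamma(s)|\le(\Lambda+2\epsilon/3)F^{-1}(s)$ on a tail, and then performs the substitution $u=F^{-1}(s)$ to obtain
\[
|z(t)|\le\int_0^{F^{-1}(t)}\frac{f((\Lambda+2\epsilon/3)u)}{f(u)}\,du\le\bigl(\bar\Phi_f(\Lambda+2\epsilon/3)+\tfrac{\epsilon}{6}\bigr)F^{-1}(t).
\]
Letting $\epsilon\to 0$ (with some care, since $\bar\Phi_f$ is not assumed continuous) yields $\Lambda\le\bar\Phi_f(\Lambda)<\Lambda$, the contradiction.

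Your approach is instead dynamical: you use $\phi=\beta F^{-1}$ as a strict sub--barrier, push $z$ above $\beta F^{-1}$ by a first--touching argument, and then invoke the asymptotic--integration machinery already set up in Lemma~\ref{lemma.step1zsimFinv} to force $z(t)/F^{-1}(t)\to 1$. This avoids the integral representation and the delicate $\epsilon\to 0$ limit through $\bar\Phi_f$; it trades them for the barrier inequality \eqref{eq.barrierplan}, whose verification (combining \eqref{eq.fasypres} to absorb the $\Gamma$--perturbation and \eqref{eq.fasypresmu} to supply the strict gap) you describe correctly. A pleasant by--product is that your argument does not merely contradict $\Lambda<1$ but actually produces the full limit $z(t)/F^{-1}(t)\to 1$; the same barrier--plus--integration template therefore carries over essentially unchanged to STEP~4 (take any $\beta\in(0,1)$ when $\Lambda=1$), whereas the paper's integral--estimation argument for STEP~3 does not directly yield STEP~4.
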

	\begin{proof}
		The quantity $\Lambda$ obey $\Lambda\in [0,1]$ by Lemma~\ref{lemma.step2zsimFinv}. Assume, by way of contradiction to the claim that $\Lambda\in (0,1)$. Then we may take $0<\epsilon<1$ sufficiently small that $\Lambda+2\epsilon/3<1$. By the property \eqref{eq.fasypresmu}, we have for every $\eta>0$ (and $\epsilon$) that there is an $\tilde{x}_1(\eta,\epsilon)<1$ such that 
		\[
		\frac{f(\Lambda+2\epsilon/3)/x}{f(x)}<\bar{\Phi}_f(\Lambda+2\epsilon/3)+\eta, \quad x\leq \tilde{x}_1(\eta,\epsilon).
		\]  
		Now let $\eta=\epsilon/6$ and $x_1(\epsilon):=\tilde{x}_1(\epsilon/6,\epsilon)$; then 
		\[
		\frac{f(\Lambda+2\epsilon/3)/x}{f(x)}<\bar{\Phi}_f(\Lambda+2\epsilon/3)+\frac{\epsilon}{6}, \quad x\leq x_1(\epsilon).
		\] 
		
		Next, the limsup on $z$ and $\Gamma$ imply that for every $\epsilon\in (0,1)$ sufficiently small, there exist $T_1(\epsilon)>0$ and $T_2(\epsilon)>0$ such that 
		\[
		|z(t)|<(\Lambda+\epsilon/3)F^{-1}(t), \quad t\geq T_1(\epsilon); \quad
		|\Gamma(t)|< \frac{\epsilon}{3} F^{-1}(t), \quad t\geq T_2(\epsilon).
		\]
		Since $z'(t)=-f(z(t)+\Gamma(t))$, noting that $z(t)\to 0$ as $t\to\infty$, and that the bounds on $z$ and $\Gamma$ imply the absolute integrabilty of $t\mapsto f(z(t)+\Gamma(t))$, we have that 
		\[
		z(t)=\int_t^\infty f(z(s)+\Gamma(s))\,ds, \quad t\geq 0.
		\] 
		Finally, since $F^{-1}(t)\to 0$ as $t\to\infty$, we have that there exists $T_3(\epsilon)>0$ such that $F^{-1}(t)<x_1(\epsilon)$ for all $t\geq T_3(\epsilon)$. Now set $T(\epsilon)=\max(T_1,T_2,T_3)$, and let $t\geq T(\epsilon)$. Then 
		\begin{align*}
			|z(t)|&=\left|\int_t^\infty f(z(s)+\Gamma(s))\,ds\right|\\
			&\leq \int_t^\infty |f(z(s)+\Gamma(s))|\,ds\\
			&= \int_t^\infty f(|z(s)+\Gamma(s)|)\,ds\\
			&\leq \int_t^\infty f(|z(s)|+|\Gamma(s)|)\,ds\\
			&\leq \int_t^\infty f((\Lambda+2\epsilon/3)F^{-1}(s))\,ds\\
			&=\int_0^{F^{-1}(t)} \frac{f((\Lambda+2\epsilon/3)u)}{f(u)}\,du\\
			&\leq \left\{\bar{\Phi}_f(\Lambda+2\epsilon/3)+\frac{\epsilon}{6}\right\}F^{-1}(t).
		\end{align*}
		Hence for all $\epsilon>0$ so small that $\Lambda+2\epsilon/3<1$, we have 
		\begin{equation} \label{eq.zFinvbdd}
			\limsup_{t\to\infty} \frac{|z(t)|}{F^{-1}(t)}\leq \bar{\Phi}_f(\Lambda+2\epsilon/3)+\frac{\epsilon}{6}.
		\end{equation}
		The rest of the proof involves taking limits in \eqref{eq.zFinvbdd}
		as $\epsilon\to 0^+$. This is delicate, as we have not assumed continuity of $\bar{\Phi}_f$; however, we can connect $\bar{\Phi}_f$ to $\underline{\Phi}_f$, which does obey a limit property as its argument tends to zero. 
		
		Next, we have  
		\begin{align*}
			\bar{\Phi}_f(\Lambda+2\epsilon/3)&=
			\limsup_{x\to 0^+} \frac{f((\Lambda+2\epsilon/3)x)}{f(\Lambda x)}
			\cdot \frac{f(\Lambda x)}{f(x)}\\
			&\leq \limsup_{x\to 0^+} \frac{f((1+2\epsilon/(3\Lambda) )x)}{f(x)} \bar{\Phi}_f(\Lambda)\\
			&= \bar{\Phi}_f(1+2\epsilon/(3\Lambda))\bar{\Phi}_f(\Lambda),
		\end{align*}
		or summarising, we have 
		\begin{equation} \label{eq.Phi1}
			\bar{\Phi}_f(\Lambda+2\epsilon/3)\leq \bar{\Phi}_f(1+2\epsilon/(3\Lambda))\bar{\Phi}_f(\Lambda).
		\end{equation}
		Now, note that \eqref{eq.fasypres} implies that $\underline{\Phi}_f(\epsilon')\to 1$ as $\epsilon'\to 0^+$ where  
		\[
		\underline{\Phi}_f(\epsilon')=\liminf_{x\to 0^+} \frac{f((1-\epsilon')x)}{f(x)}.
		\]
		Hence
		\[
		\frac{1}{\underline{\Phi}_f(\epsilon')} 
		=\limsup_{x\to 0^+} \frac{f(x)}{f((1-\epsilon')x)}
		=\limsup_{y\to 0^+} \frac{f(y/(1-\epsilon'))}{f(y)}
		=\bar{\Phi}_f\left(\frac{1}{1-\epsilon'}\right).
		\] 
		Now, choose $\epsilon'\in (0,1)$ such that $1+2\epsilon/(3\Lambda)=1/(1-\epsilon')$; this fixes 
		\[
		\epsilon':=1-\frac{1}{1+\frac{2\epsilon}{3\Lambda}},
		\]
		and we see that 
		\begin{equation} \label{eq.Phi2}
			\frac{1}{\underline{\Phi}_f\left(1-\frac{1}{1+\frac{2\epsilon}{3\Lambda}}\right)}=\bar{\Phi}_f\left(1+2\epsilon/(3\Lambda)\right).
		\end{equation}
		Finally, combining \eqref{eq.Phi1}, \eqref{eq.Phi2} and 
		\eqref{eq.zFinvbdd} yields
		\begin{align*}
			\limsup_{t\to\infty} \frac{|z(t)|}{F^{-1}(t)}&\leq \bar{\Phi}_f(\Lambda+2\epsilon/3)+\frac{\epsilon}{6}\\
			&\leq \bar{\Phi}_f(1+2\epsilon/(3\Lambda))\bar{\Phi}_f(\Lambda)  + \frac{\epsilon}{6}\\
			&= \frac{1}{\underline{\Phi}_f\left(1-\frac{1}{1+\frac{2\epsilon}{3\Lambda}}\right)} \cdot \bar{\Phi}_f(\Lambda)  + \frac{\epsilon}{6}.
		\end{align*}
		The argument in $\underline{\Phi}_f$ in the denominator tends to zero as $\epsilon\to 0^+$; thus, by \eqref{eq.fasypres}, the denominator tends to unity as $\epsilon\to 0^+$. Since $\epsilon>0$ is arbitrary, we may take $\epsilon\to 0^+$ across the inequality to get 
		\[
		\limsup_{t\to\infty} \frac{|z(t)|}{F^{-1}(t)} \leq \bar{\Phi}_f(\Lambda).
		\]
		By the definition of $\Lambda\in (0,1)$, this reads $\Lambda\leq   \bar{\Phi}_f(\Lambda)$. But for any $\Lambda\in (0,1)$, \eqref{eq.fasypresmu} implies $\bar{\Phi}_f(\Lambda)<\Lambda$, a contradiction. Hence the original supposition that $\Lambda\in (0,1)$ must be false, and we have either $\Lambda=0$ or $\Lambda\geq 1$, as claimed. 
	\end{proof}
	
	\subsection{Proof of Theorem~\ref{thm.ThZ}} 
	Since 
	\[
	\liminf_{x\to 0^+} \frac{f((1-\epsilon)x)}{f(x)}=\underline{\Phi}_f(\epsilon)
	\]
	we have that 
	\[
	\overline{\Phi}_f(1+\epsilon):=\limsup_{x\to 0^+} \frac{f((1+\epsilon)x)}{f(x)}
	=\frac{1}{\underline{\Phi}_f(\epsilon/(1+\epsilon))}.
	\]
	Thus 
	$\overline{\Phi}_f(1+\epsilon)\to 1$ 
	as $\epsilon\to 0^+$. Thus for every $\eta>0$ sufficiently small there is an $\tilde{x}_1(\eta, \epsilon)>0$ such that 
	\[
	\frac{f((1+\epsilon)x)}{f(x)} < 	\overline{\Phi}_f(1+\epsilon)+\eta, \quad x<\tilde{x}_1(\eta, \epsilon).
	\]
	Choose $\eta=\epsilon$ and let $x_1(\epsilon)=\tilde{x}_1(\epsilon,\epsilon)$. Then 
	\begin{equation} \label{eq.fdd}
		\frac{f((1+\epsilon)x)}{f(x)} < 	\overline{\Phi}_f(1+\epsilon)+\epsilon, \quad x<x_1( \epsilon).
	\end{equation}	
	Next define $\lambda_\epsilon=\max(\overline{\Phi}_f(1+\epsilon)+\epsilon,1+\epsilon)$. Since $\Gamma(t)=o(F^{-1}(t))$ as $t\to\infty$ and $F^{-1}$ is asymptotic preserving, for every $\epsilon\in (0,1)$ sufficiently small, there is a $T_1(\epsilon)>0$ such that $|\Gamma(t)|<\epsilon F^{-1}(\lambda_\epsilon t)$ for $t\geq T_1(\epsilon)$ and a $T_2(\epsilon)>0$ such that $F^{-1}(\lambda_\epsilon t)<x_1(\epsilon)$ for all $t\geq T_2(\epsilon)$. Next recall that 
	\[
	\limsup_{t\to\infty} \frac{F^{-1}((1+\epsilon)t)}{F^{-1}(t)}
	=\overline{\Phi}_F(\epsilon)
	\] 
	where $\overline{\Phi}_F(\epsilon)<1$ and $\overline{\Phi}_F(\epsilon)\to 1$ as $\epsilon\to 0^+$. Thus for all $\eta>0$ (and $\epsilon$ sufficiently small) there is a $\tilde{T}_3(\eta,\epsilon)>0$ such that 
	\[
	\frac{F^{-1}((1+\epsilon)t)}{F^{-1}(t)}
	<\overline{\Phi}_F(\epsilon)+\eta, \quad t\geq \tilde{T}_3(\eta,\epsilon).
	\] 
	Now, choose $\eta=\eta(\epsilon)-(1-\overline{\Phi}_F(\epsilon))/2$. Define $T_3(\epsilon)=\tilde{T}_3(\epsilon,\eta(\epsilon))$, so for all $t\geq T_3(\epsilon)$ we have
	\begin{equation} \label{eq.Finvdot}
		\frac{F^{-1}((1+\epsilon)t)}{F^{-1}(t)}
		<\overline{\Phi}_F(\epsilon)+\eta(\epsilon)=\frac{1+\overline{\Phi}_F(\epsilon)}{2}=:\pi(\epsilon), \quad t\geq T_3(\epsilon).
	\end{equation}	 
	Note, since $\overline{\Phi}_F(\epsilon)<1$ and $\overline{\Phi}_F(\epsilon)\to 1$ as $\epsilon\to 0^+$, we have that $\pi(\epsilon)<1$ and $\pi(\epsilon)\to 1$ as $\epsilon\to 0^+$. 
	
	We have already shown that $\limsup_{t\to\infty} |z(t)|/F^{-1}(t)=0$ or $1$. Putting aside the former case, in which $z(t)=o(F^{-1}(t))$ as $t\to\infty$, we have either 
	\begin{itemize}
		\item[(a)]  For every $\epsilon>0$ sufficiently small, there is $t_n\uparrow \infty$ such that $z(t_n)>\pi(\epsilon)F^{-1}(t_n)$ (such a sequence is guaranteed to exist because $\pi(\epsilon)<1$ and $\pi(\epsilon)\to 1$ as $\epsilon\to 0^+$); or
		\item[(b)] For every $\epsilon>0$ sufficiently small, there is $\tau_n\uparrow \infty$ such that $z(\tau_n)<-\pi(\epsilon)F^{-1}(t_n)$ (such a sequence is guaranteed to exist because $\pi(\epsilon)<1$ and $\pi(\epsilon)\to 1$ as $\epsilon\to 0^+$); or
	\end{itemize}
	In case (a), define 
	\[
	T(\epsilon)=\min\{t_n:t_n>T_4(\epsilon)=\max(T_1(\epsilon),T_2(\epsilon),T_3(\epsilon))\}
	\]
	and also 
	\[
	z_-(t)=F^{-1}(\lambda_\epsilon t), \quad t\geq T(\epsilon).
	\]	 
	Then by the construction of $T$ from $t_n$, \eqref{eq.Finvdot} and the definition of $\lambda_\epsilon$, we have 
	\[
	z(T)>\pi(\epsilon)F^{-1}(T)>F^{-1}((1+\epsilon)T)\geq F^{-1}(\lambda_\epsilon T)=z_-(T).
	\]	
	Next for $t\geq T(\epsilon)$, using the definition of $T_1$ and $T_2$ alongside \eqref{eq.fdd}, and the definition of $\lambda_\epsilon$, we get the estimate 
	\begin{align*}
		f(z_-(t)+\Gamma(t))&< f(z_-(t)+\epsilon F^{-1}(\lambda_\epsilon t))\\
		&=f((1+\epsilon)F^{-1}(\lambda_\epsilon t))\\
		&< (\overline{\Phi}_f(1+\epsilon)+\epsilon)f(F^{-1}(\lambda_\epsilon t))\\
		&\leq \lambda_\epsilon f(z_-(t))=-z_-'(t).
	\end{align*}
	Therefore
	\[
	z_-'(t)<-f(z_-(t)+\Gamma(t)), \quad t\geq T(\epsilon).
	\]
	From this and the fact that $z(T)>z_-(T)$, we have $z(t)>z_-(t)$ for all $t\geq T(\epsilon)$. Therefore
	\begin{equation} \label{eq.zFinvddd}
		\liminf_{t\to\infty} \frac{z(t)}{F^{-1}(t)}\geq 
		\liminf_{t\to\infty} \frac{F^{-1}(\lambda_\epsilon t)}{F^{-1}(t)}
		=\underline{\Phi}_F(\lambda_\epsilon-1).
	\end{equation}
	Since $\lambda_\epsilon\to 1$ as $\epsilon\to 0^+$, we have $\underline{\Phi}_F(\lambda_\epsilon-1)\to 1$ as $\epsilon\to 0^+$. 
	Hence taking the limit as $\epsilon \to 0^+$ in \eqref{eq.zFinvddd}, we get 
	\begin{equation*} 
		\liminf_{t\to\infty} \frac{z(t)}{F^{-1}(t)}\geq 
		1.
	\end{equation*}
	In this case, we have that $z(t)>0$ for all $t>0$ sufficiently large and therefore 
	\[
	\limsup_{t\to\infty} \frac{z(t)}{F^{-1}(t)}=
	\limsup_{t\to\infty} \frac{|z(t)|}{F^{-1}(t)}=1,
	\]
	so combining this with the liminf, we have that $z(t)/F^{-1}(t)\to 1$ as $t\to\infty$ in case (a).
	
	Now, suppose we are in case (b). 
	Define 
	\[
	\tau(\epsilon)=\min\{\tau_n:\tau_n>T_4(\epsilon)\},
	\]
	and define also $z_+(t)=-F^{-1}(\lambda_\epsilon t)$ for $t\geq \tau(\epsilon)$. Now
	\[
	-z(\tau)>\pi(\epsilon)F^{-1}(\tau)>F^{-1}((1+\epsilon)\tau)\geq F^{-1}(\lambda_\epsilon \tau)=-z_+(\tau),
	\]
	or $z_+(\tau)>z(\tau)$. Next for $t\geq \tau$ we estimate
	\begin{align*}
		f(z_+(t)+\Gamma(t))&> f(z_+(t)-\epsilon F^{-1}(\lambda_\epsilon t))\\
		&=f(-(1+\epsilon)F^{-1}(\lambda_\epsilon t))\\
		&=-f((1+\epsilon)F^{-1}(\lambda_\epsilon t)),
	\end{align*}
	so $-f(z_+(t)+\Gamma(t))<f((1+\epsilon)F^{-1}(\lambda_\epsilon t))<\lambda_\epsilon f(-z_+(t))=z_+'(t)$, since $f$ is odd. 
	Hence 
	\[
	z_+'(t)>-f(z_+(t)+\Gamma(t)), \quad t\geq \tau.
	\]
	As a result $z_+(t)>z(t)$ for all $t\geq \tau$. Hence
	\[
	\frac{z(t)}{F^{-1}(t)}<-\frac{F^{-1}(\lambda_\epsilon t)}{F^{-1}(t)}, \quad t\geq \tau.
	\]
	Now, proceeding as in case (a), we get 
	\[
	\limsup_{t\to\infty} \frac{z(t)}{F^{-1}(t)}\leq -1,
	\]
	and we see that $z(t)<0$ for all $t$ sufficiently large. Therefore
	\[
	1=\limsup_{t\to\infty} \frac{|z(t)|}{F^{-1}(t)}=\limsup_{t\to\infty} \frac{-z(t)}{F^{-1}(t)},
	\]
	so $\liminf_{t\to\infty} z(t)/F^{-1}(t)=-1$. Combining this with the limsup above, we have that $z(t)/F^{-1}(t)\to -1$ as $t\to\infty$. 
	
	Thus, we see that if $\limsup_{t\to\infty} |z(t)|/F^{-1}(t)=1$ we must have either $z(t)/F^{-1}(t)\to \pm1$ as $t\to\infty$. The only other possibilty is if $\limsup_{t\to\infty} |z(t)|/F^{-1}(t)=0$, in which case $z(t)/F^{-1}(t)\to 0$ as $t\to\infty$. Therefore, we must have 
	\[
	\lim_{t\to\infty} \frac{z(t)}{F^{-1}(t)}=\lambda\in \{-1,0,+1\},
	\]
	as required.
	
	\section{Proofs of Results for Externally Perturbed Equation}
	In this section, we complete proofs of results for the ordinary differential equation $x'=-f(x)+g$ which characterise certain types of decay rate. 
	
	To a substantial degree, results from which properties of solutions are deduced from the asymptotic order of the forcing term are more or less straightforward corollaries of corresponding results for the internally perturbed equation. 
	
	Converse results, which demonstrate that these size restrictions on the forcing term are essentially optimal, mostly involve writing $g$ or $\Gamma$ in terms of $x$ by a straight rearrangement of the differential equation. Once this is done, the asymptotic estimates on $x$ (or functionals of $x$) derived from sufficiency results are then used to determine the appropriate asymptotic restrictions on $g$ or $\Gamma$.  
	
	\subsection{Proof of Theorem \ref{thm.expertodeiff}}
	The fact that (a) implies (b) is the subject of Theorem \ref{thm.T1ch3}. 
	
	We need to show that (b) implies (a). By hypothesis, we have that 
	$x(t)\sim \lambda F^{-1}(t)$ as $t\to\infty$, where $\lambda=0,\pm 1$. 
	
	We take first the case where $\lambda=1$. Since $f$ obeys \eqref{eq.fasypres}, we have that $f(x(t))\sim f(F^{-1}(t))$ as $t\to\infty$. Therefore, since $(f\circ F^{-1})(t)>0$ for all $t\geq 0$, we have  
	\[
	\int_0^t |f(F^{-1}(s))|\,ds = \int_0^t f(F^{-1}(s))\,ds= 1-F^{-1}(t)
	\] 
	so $f\circ F^{-1}$ is in $L^1(0,\infty)$, and therefore $t\mapsto f(x(t))\in L^1(0,\infty)$. Integrating the differential equation yields
	\begin{equation} \label{eq.inteqn}
		x(t)=\xi-\int_0^t f(x(s))\,ds + \int_0^t g(s)\,ds, 
	\end{equation}
	so because $x(t)\to 0$ as $t\to\infty$, we have that $\lim_{t\to\infty} \int_0^t g(s)\,ds$ is finite. Call this limit $L$. Then we may define 
	\[
	\Gamma(t):=\int_0^t g(s)\,ds - L =: -\int_t^\infty g(s)\,ds, \quad t\geq 0.
	\]
	Taking the limit as $t\to\infty$ in \eqref{eq.inteqn}, we get 
	\begin{equation*} 
		0=\xi-\int_0^\infty f(x(s))\,ds + L. 
	\end{equation*}
	Subtracting this from \eqref{eq.inteqn}, and using the definition of $\Gamma$, we arrive at 
	\begin{equation} \label{eq.Gammaintermsofx}
		\Gamma(t)=x(t)-\int_t^\infty f(x(s))\,ds, \quad t\geq 0.
	\end{equation}
	Since $f(x(t))\sim f(F^{-1}(t))$ as $t\to\infty$,
	\[
	\int_t^\infty f(x(s))\,ds \sim \int_t^\infty f(F^{-1}(s))\,ds = F^{-1}(t), \quad t\to\infty.
	\] 
	Since $x(t)\sim F^{-1}(t)$ as $t\to\infty$, from \eqref{eq.Gammaintermsofx} and the estimate for $\int_t^\infty f(x(s))\,ds$, we get $\Gamma(t)=o(F^{-1}(t))$ as $t\to\infty$, as required. This shows that (b) implies (a) in the case $\lambda=1$.
	
	In the case $\lambda=-1$, since $f$ is odd and obeys \eqref{eq.fasypres}, we have that $x(t)\sim -F^{-1}(t)$ as $t\to\infty$ implies $-x(t)\sim F^{-1}(t)$, so $f(x(t))=-f(-x(t))\sim -f(F^{-1}(t))$ as $t\to\infty$. Thus, $-f(x(t))>0$ for all $t$ sufficiently large, so 
	$f(x(t))\in L^1(0,\infty)$ because $f(F^{-1}(t))\in L^1(0,\infty)$. Integrating the differential equation again yields \eqref{eq.inteqn}, and since $x(t)\to 0$ as $t\to\infty$ and $t\mapsto f(x(t))\in L^1(0,\infty)$, it follows once again that $\int_0^t g(s)\,ds$ tends to a finite limit $L$ as $t\to\infty$, and so $\Gamma$ is well--defined and obeys the relation \eqref{eq.Gammaintermsofx}.
	
	Now, on the righthand side of \eqref{eq.Gammaintermsofx}, by the above considerations, we have that 
	\[
	\int_t^\infty f(x(s))\,ds \sim \int_t^\infty -f(F^{-1}(s))\,ds =-F^{-1}(t), \quad t\to\infty,
	\]    
	while $x(t)\sim -F^{-1}(t)$ as $t\to\infty$. Now, using \eqref{eq.Gammaintermsofx} and the above estimates, we get $\Gamma(t)=o(F^{-1}(t))$ as $t\to\infty$, as before. Thus we have shown that (b) implies (a) in the case $\lambda=-1$.
	
	It remains to deal with the case when $\lambda=0$. In this situation, we have for every $\epsilon\in (0,1)$ that there is $T(\epsilon)>0$ such that 
	$|x(t)|<\epsilon F^{-1}(t)$ for all $t\geq T(\epsilon)$. Therefore using the fact that $f$ is odd and increasing, we have for $t\geq T(\epsilon)$ that 
	\[
	|f(x(t))|=f(|x(t)|)\leq f(\epsilon F^{-1}(t)).
	\]
	Since $f$ is increasing, $|f(x(t))|\leq f( F^{-1}(t))$ for all $t\geq T(\epsilon)$, and since $t\mapsto f(F^{-1}(t))$ is in $L^1(0,\infty)$, it follows that $t\mapsto f(x(t))$ is also in $L^1(0,\infty)$. Since $x(t)\to 0$ as $t\to\infty$, we have from \eqref{eq.inteqn} that $\int_0^t g(s)\,ds$ once again tends to a finite limit (call it $L$), and therefore, once again $\Gamma$ is well--defined and obeys \eqref{eq.Gammaintermsofx}. 
	
	We now estimate more carefully the integral term in \eqref{eq.Gammaintermsofx}. For $t\geq T(\epsilon)$ we have 
	\[
	\frac{|f(x(t))|}{f(F^{-1}(t))}\leq \frac{f(\epsilon F^{-1}(t))}{f(F^{-1}(t))},
	\]
	so for $\epsilon \in (0,1)$, using \eqref{eq.fasypresmu} we have 
	\[
	\limsup_{t\to\infty} \frac{|f(x(t))|}{f(F^{-1}(t))}\leq 
	\limsup_{x\to 0^+} \frac{f(\epsilon x)}{f(x)}=\bar{\Phi}_f(\epsilon)<\epsilon.
	\]
	Since $\epsilon\in (0,1)$ is arbitrary, we have that $f(x(t))=o(f(F^{-1}(t)))$ as $t\to\infty$, and this in turn implies that 
	\[
	\int_{t}^\infty f(x(s))\,ds = o\left(\int_t^\infty f(F^{-1}(s))\,ds\right)=o(F^{-1}(t)), \quad t\to\infty.
	\]
	Since $x(t)=o(F^{-1}(t))$ as $t\to\infty$, both terms on the righthand side of \eqref{eq.Gammaintermsofx} are $o(F^{-1}(t))$, and hence $\Gamma(t)=o(F^{-1}(t))$ as $t\to\infty$, as claimed. Thus (b) implies (a) in the case that $\lambda=0$, proving the implication (b) implies (a) in all cases. Hence (a) and (b) are equivalent, as claimed. 
	
	\subsection{Proof of Theorem \ref{thm.deriv}}
	We show first that (c) implies (d). Note first that (c) implies $\Gamma(t)=o(F^{-1}(t))$ as $t\to\infty$, because
	\[
	\Gamma(t)=-\int_t^\infty g(s)\,ds, \quad F^{-1}(t)=\int_t^\infty f(F^{-1}(s))\,ds.
	\] 
	Therefore, by Theorem~\ref{thm.expertodeiff}, $x(t)\sim \lambda F^{-1}(t)$ as $t\to\infty$ for $\lambda=0, \pm1$. In the case that $\lambda=\pm 1$, since $f$ obeys \eqref{eq.fasypres}, we have in the case $\lambda=1$ the estimate 
	\[
	f(x(t))\sim f(F^{-1}(t)), \quad t\to\infty,
	\]
	while in the case $\lambda=-1$, using first the fact that $f$ is odd, we have 
	\[
	f(x(t))=-f(-x(t))\sim -f(F^{-1}(t)), \quad t\to\infty.
	\]
	Hence, in these cases $x(t)\sim \lambda F^{-1}(t)$ as $t\to\infty$ implies $f(x(t))\sim \lambda f(F^{-1}(t))$ as $t\to\infty$. Thus, since $g(t)=o(F^{-1}(t))$ as $t\to\infty$, we have in the case $\lambda=\pm1$ that 
	\[
	x'(t)=-f(x(t))+g(t)\sim -\lambda f(F^{-1}(t)), \quad t\to\infty,
	\]
	and so (c) implies (d) in the cases where $\lambda\neq 0$. 
	
	To tackle the final case, where $\lambda=0$, we can argue as in the proof of Theorem~\ref{thm.expertodeiff} above that $x(t)=o(F^{-1}(t))$ as $t\to\infty$ implies that $f(x(t))=o(f(F^{-1}(t)))$ as $t\to\infty$. 
	Since $g(t)=o(f(F^{-1}(t)))$ as $t\to\infty$, we have in the case when $\lambda=0$ that 
	\[
	x'(t)=-f(x(t))+g(t)= o(f(F^{-1}(t))), \quad t\to\infty,
	\]
	and so once again $x'(t)\sim -\lambda f(F^{-1}(t))$ as $t\to\infty$. Hence (c) implies (d) in all situations. 
	
	To show (d) implies (c), rearrange the differential equation to make  $g$ its subject, and then divide by $f(F^{-1}(t))$. This gives 
	\[
	\frac{g(t)}{f(F^{-1}(t))} = \frac{x'(t)}{f(F^{-1}(t))}+\frac{f(x(t))}{f(F^{-1}(t))},
	\]  
	so taking limits as $t\to\infty$, and noting, as shown above, that 
	$x(t)\sim \lambda F^{-1}(t)$ as $t\to\infty$ implies $f(x(t))\sim \lambda f(F^{-1}(t)))$ as $t\to\infty$, we see that the first term on the right hand side has limit $-\lambda$, by (d), and the second has limit $\lambda$, by the argument just given. Therefore, the limit of the righthand side overall is zero, so $g(t)=o(f(F^{-1}(t)))$ as $t\to\infty$, proving (c).
	
	\subsection{Asymptotic Characterisation for Positive $g$}
	In the case when $g$ is positive, we can give a very sharp characterisation of the size of $g$ that is permissible, while preserving the rate of decay of the unperturbed equation. One nice feature of these results is that we can impose weaker conditions on $f$. 
	\begin{theorem}
		Let $g$ be non--negative and continuous, and let $f$ be continuous and increasing. Consider
		\[
		x'(t) = -f(x(t)) + g(t), \quad t \geq 0, \quad x(0)=\xi>0
		\]
		If $f'(x)\to 0$ as $x\to 0$ or $f\circ F^{-1}$ is asymptotic preserving, and 
		\begin{equation} \label{eq.mixedcondn}
			\frac{1}{t} \int_0^t \frac{g(s)}{f(F^{-1}(s))} \, ds \;\to\; 0, 
			\quad \text{as } t \to \infty,
		\end{equation}
		then
		\[
		\frac{F(x(t))}{t} \;\to\; 1, \quad t \to \infty.
		\]
	\end{theorem}
	\begin{proof}
		Define $y'(t) = -f(y(t)), \; y(0) = \xi/2$.  
		Then $x(t)\geq  y(t) > 0$ and
		\[
		F(y(t)) - F(\tfrac{\xi}{2}) = t,
		\]
		Since $f$ is increasing, we have $f(x(t)) \geq f(y(t)) > 0$, so 
		\[
		\frac{1}{f(x(t))} \leq  \frac{1}{f(y(t))}.
		\]
		Hence, as $g(t)\geq 0$, we have 
		\[
		\frac{x'(t)}{f(x(t))} 
		= -1 + \frac{g(t)}{f(x(t))}
		\leq  -1 + \frac{g(t)}{f(y(t))}.
		\]
		Integrating yields
		\[
		\int_0^t \frac{x'(s)}{f(x(s))}\, ds
		\;\leq\; -t + \int_0^t \frac{g(s)}{f(y(s))}\, ds.
		\]
		Integrating by substitution on the left hand side and notice that $y(t) = F^{-1}(F(\xi/2) + t)$, we obtain
		\[
		\int_{x(t)}^{\xi} \frac{du}{f(u)} \geq t - \int_{0}^{t} \frac{g(s)}{(f \circ F^{-1})(F(\xi/2)+s)} \, ds,
		\]
		so
		\[
		F(x(t)) - F(\xi) \geq t - \int_{0}^{t} 
		\frac{g(s)}{(f \circ F^{-1})(F(\xi/2)+s)} \, ds, \quad t\geq 0.
		\]
		On the other hand, 
		\[
		F(x(t)) \leq F(y(t)) = F(\xi/2) + t, \quad t\geq 0.
		\]
		Let $\varphi(t) = (f \circ F^{-1})(t)$; if $\varphi$ is asymptotic preserving, we have 
		\[
		(f \circ F^{-1})(F(\xi/2)+t) \sim (f \circ F^{-1})(t), 
		\quad t \to \infty.
		\]
		Now, we check when $f'(x)\to 0$ as $x\to 0$ that $\varphi$ is subexponential. We have 
		\[
		\varphi'(t) = f'\!\left(F^{-1}(t)\right)\cdot (F^{-1})'(t) 
		= f'\!\left(F^{-1}(t)\right)\cdot {-f(F^{-1}(t))}
		= {- f'(F^{-1}(t)).} \, \varphi(t),
		\]
		So $\varphi'(t)/\varphi(t)\to 0$ as $t\to\infty$ provided $f'(x) \to 0$ as $x \to 0^+$, and so 
		\[
		\varphi(F(\xi/2)+t) \sim \varphi(t), \quad t \to \infty,
		\]
		and so 	$(f \circ F^{-1})(F(\xi/2)+t) \sim (f \circ F^{-1})(t)$ , 
		as $t\to\infty$ in this case too. 
		
		Thus, in the cases where either $f'(x)\to 0$ as $x\to\infty$, or when $f\circ F^{-1}$ is asymptotic preserving, for every $\varepsilon \in (0,1)$ there is a $T(\varepsilon)>0$ such that  
		\[
		(1-\varepsilon)\varphi(t) \leq \varphi(F(\xi/2)+t) \leq (1+\varepsilon)\varphi(t),
		\quad  t \geq T(\varepsilon).
		\]
		Thus
		\[
		\frac{1}{(1-\varepsilon)t}\int_{T}^{t} \frac{g(s)}{\varphi(s)} \, ds
		\geq \frac{1}{t}\int_{T}^{t} 
		\frac{g(s)}{\varphi(F(\xi/2)+s)} \, ds
		\geq \frac{1}{(1+\varepsilon)t}\int_{T}^{t} \frac{g(s)}{\varphi(s)} \, ds.
		\]
		By hypothesis, we have 
		\[
		\frac{1}{t}\int_{0}^{t} \frac{g(s)}{\varphi(s)} \, ds \;\to\; 0, 
		\quad t \to \infty,
		\]
		so
		\[
		\frac{1}{t}\int_{0}^{t} 
		\frac{g(s)}{(f \circ F^{-1})(F(\xi/2)+s)} \, ds \;\to\; 0, 
		\quad t \to \infty.
		\]
		Hence, using the estimate 
		\[
		\frac{F(x(t))}{t} > 1 + \frac{F(\xi)}{t} - \frac{1}{t} \int_0^t \frac{g(s)}{(f \circ F^{-1})(s + F(\frac{\xi}{2}))} \, ds,
		\]
		we get $\liminf_{t \to \infty} F(x(t)/t \geq 1$. On the other hand, 
		\[
		\limsup_{t \to \infty} \frac{F(x(t))}{t} \leq \limsup_{t\to\infty} \frac{F(y(t))}{t}=1,
		\]
		so combining these limits gives $F(x(t))/t\to 1$ as $t\to\infty$, as claimed. 
	\end{proof}
	We now show that the smallness condition on $g$ is necessary, provided $f\circ F^{-1}$ is asymptotic preserving.	
	\begin{theorem}
		Suppose that $g$ is positive and continuous, $f\circ F^{-1}$ is asymptotic preserving, and $F(x(t))/t\to 1$ as $t\to\infty$. Then $g$ obeys  \eqref{eq.mixedcondn}.
	\end{theorem}	
	\begin{proof} 
		If $F(x(t)) \sim t$, then $x(t)>0$ for all $t$ sufficiently large ($t>T$, say). Since $f \circ F^{-1}$ is asymptotic preserving, 
		we have that 
		\[
		f(x(t))=(f\circ F^{-1})(F(x(t)))\sim (f\circ F^{-1})(t), \quad t\to\infty.
		\]
		Thus, for all $t>T$ we have 
		\[
		\frac{x'(t)}{f(x(t))} +1 = \frac{g(t)}{f(x(t))}.
		\]
		Now, integrating yields 
		\[
		\int_T^t \frac{g(s)}{f(x(s))} \, ds = t - T + \int_T^t \frac{x'(s)}{f(x(s))} \, ds
		= t - T - [F(x(t)) - F(x(T))],
		\]
		so, since $F(x(t))/t\to 1$ as $t\to\infty$, we have 		
		\[
		\frac{1}{t} \int_T^t \frac{g(s)}{f(x(s))} \, ds \to 0, \quad t\to\infty. 
		\]
		Next, for every $\epsilon\in (0,1)$ there is a $T_1(\epsilon)>0$ such that for $t \geq T_{1}(\epsilon)$, we have
		\[
		(1-\epsilon)(f \circ F^{-1})(t) 
		< f(x(t)) 
		< (1+\epsilon)(f \circ F^{-1})(t).
		\]
		Hence
		\[
		\frac{g(t)}{(1-\epsilon)(f \circ f^{-1})(t)} 
		\geq \frac{g(t)}{f(x(t))} 
		\geq  \frac{1}{1+\epsilon} \, \frac{g(t)}{(f \circ f^{-1})(t)},
		\qquad t > T_{1}(\epsilon),
		\]
		since $g$ is non--negative. Hence			
		\[
		0 \leq \frac{1}{t} \int_{T_1(\epsilon)}^{t} \frac{g(s)}{(1+\epsilon)(f \circ F^{-1})(s)} \, ds
		\leq \frac{1}{t} \int_{T(\epsilon)}^{t} \frac{g(s)}{f(x(s))} \, ds \longrightarrow 0,
		\quad \text{as } t \to \infty.
		\]
		Hence \eqref{eq.mixedcondn} holds, as claimed. 
	\end{proof}
	We may combine the last two results to give a characterisation of when the asymptotic behaviour of the unperturbed equation is preserved in the case of non--negative $g$.
	\begin{theorem}
		Suppose that $f$ is continuous, increasing and that $f\circ F^{-1}$ is asymptotic preserving. Let $g$ be non--negative and continuous. Then the following are equivalent:
		\begin{itemize}
			\item[(A)] $$\lim_{t\to\infty} \frac{1}{t}\int_{0}^t \frac{g(s)}{(f\circ F^{-1})(s)}\,ds \to 0, \quad t\to\infty;$$
			\item[(B)] $$\lim_{t\to\infty} \frac{F(x(t))}{t}=1.$$
		\end{itemize}
	\end{theorem}		
	%
	%
	%
		%
		%
		%
		%
		%
		%
		%
	\subsection{Characterisation of bounds}
	In this section, we prove Theorem \ref{thm.extpertodebdd} which characterises the situation in which $x(t)=O(F^{-1}(t))$ as $t\to\infty$. 
	
	\begin{theorem}
		Suppose that $f\in C(\mathbb{R};\mathbb{R})$ is increasing, odd and obeys \eqref{eq.fglobalstable}, \eqref{eq.fasypres}, and \eqref{eq.fasypresmu}. Suppose $F$ is defined by \eqref{def.F}.  Let $g$ be a continuous function such that \eqref{eq.g2} holds and let $\Gamma$ be defined by \eqref{eq:Gamma}. 
		If $\Gamma$ obeys \eqref{eq.GammaOFinv}, then the continuous solution of \eqref{eq.odepert}  obeys 
		\begin{align*}
			x(t)=O(F^{-1}(t)), \quad t\to\infty. 
		\end{align*}
	\end{theorem}
	
	\begin{proof}
		For definiteness, suppose that $|\Gamma(t)|\leq DF^{-1}(t)$ for all $t\geq 0$. 
		Define $z(t) = x(t) - \Gamma(t)$ for $t\geq 0$, so that $z'(t)=-f(z(t)+\Gamma(t))$ for $t\geq 0$. If we can show that there exists $K'>0$ such that $|z(t)| \leq K' F^{-1}(t)$ for all $t\geq 0$, the result is secured, since 
		\[
		|x(t)| \leq |z(t)| + |\Gamma(t)| \leq (K' + D) F^{-1}(t),						
		\]
		
		\textbf{Step 1:} We show first that $z$ obeys 
		$$\lambda := \liminf_{t \to \infty}\frac{|z(t)|}{F^{-1}(t)} < \infty.$$
		Suppose not: then 
		$|z(t)|/F^{-1}(t) \to \infty$ as $t\to\infty$. 
		Hence						
		\[
		\frac{\Gamma(t)}{|z(t)|} 						= \frac{\Gamma(t)}{F^{-1}(t)} \cdot \frac{F^{-1}(t)}{|z(t)|}
		\to 0 \quad \text{as } t \to \infty,
		\]
		since $\Gamma(t)/F^{-1}(t)$ is in $[-D,D]$. 					
		Therefore, as $f$ is asymptotic preserving, we have  
		\[						
		f(z(t) + \Gamma(t))= f\!\left(z(t)\left(1 + \tfrac{\Gamma(t)}{z(t)}\right)\right) 
		\sim f(z(t)), \quad t \to \infty.
		\]						
		Thus $z'(t) = - f(z(t) + \Gamma(t)) \sim - f(z(t))$ as $t\to\infty$. By our supposition, we either have $z(t) > 0$ for all sufficiently large $t$, or $z(t) < 0$ for all sufficiently large $t$. In the first case, we have
		\[
		\lim_{t \to \infty} \frac{z'(t)}{f(z(t))} = -1.
		\]
		Thus for every $\varepsilon \in (0,1)$ there exists $T_{1}(\varepsilon) > 0$ such that
		\[
		(-1 - \varepsilon) < \frac{z'(t)}{f(z(t))} < (-1 + \varepsilon), 
		\quad  t \geq T_{1}.
		\]
		So, for every $\varepsilon \in (0,1)$, we have 
		\[
		(-1 - \varepsilon)(t - T_{1}) 
		< \int_{T_{1}(\varepsilon)}^{t} \frac{z'(s)}{f(z(s))}\, ds 
		< (-1 + \varepsilon)(t - T_{1}).
		\]
		Performing a substitution, we get 
		\[
		-(1+\varepsilon)(t - T_{1}) 
		< \int_{z(T_{1})}^{z(t)} \frac{du}{f(u)} 
		< -(1-\varepsilon)(t - T_{1}), \quad t\geq T_1,
		\]
		which yields
		\[
		(1+\varepsilon)(t - T_{1}) 
		> \int_{z(t)}^{z(T_{1})} \frac{du}{f(u)} 
		> (1-\varepsilon)(t - T_{1}).
		\]
		Therefore, we get
		\[
		1 + \varepsilon \;\geq\; \limsup_{t \to \infty} \frac{F(z(t))}{t} 
		\;\geq\; \liminf_{t \to \infty} \frac{F(z(t))}{t} \;\geq\; 1 - \varepsilon.
		\]
		Let $\varepsilon \to 0$; then $F(z(t)) \sim t$ as $t \to \infty$.
		But since $f$ obeys \eqref{eq.fasypresmu}, $F^{-1}$ is asymptotic preserving, so 
		\[
		z(t) \sim F^{-1}(t), \quad t\to\infty,
		\]
		which contradicts the supposition that $z(t)/F^{-1}(t)\to + \infty$ as $t\to\infty$; this rules out $z(t)/F^{-1}(t)\to\infty$ as $t\to\infty$. 
		
		Likewise, in order to rule out the possibility that $z(t)/F^{-1}(t)\to -\infty$ as $t\to\infty$, we assume by way of contradiction that it holds. In this case, $z(t) < 0$ for all sufficiently large $t$, and we introduce the notation $z\_(t) := -z(t)$. Hence $z\_(t) > 0$ for all sufficiently large $t$, and $z'\_(t) = - z'(t)=f(z(t)+\Gamma(t))$. 
		Since $f$ is odd, we have 
		\[
		z'\_(t) =f(z(t)+\Gamma(t))=-f(-z(t)-\Gamma(t))=-f(z\_(t)-\Gamma(t))
		\] 
		Arguing as above, we have $\Gamma(t)/z\_(t)\to 0$ as $t\to\infty$, so since $f$ is asymptotic preserving we have  
		\[
		z'\_(t) \sim -f(z\_(t)), \quad t\to\infty.
		\]
		So by the same argument as above, we have that 
		\[
		F(z\_(t)) \sim t, \quad t \to \infty,
		\]
		and since $F^{-1}$ is asymptotic preserving, this implies 
		Thus $z\_(t) \sim F^{-1}(t)$ as $t \to \infty$. 
		Hence, $z(t) \sim -F^{-1}(t)$ as $t \to \infty$, which contradicts the supposition that $z(t)/F^{-1}(t)\to -\infty$. This completes the proof of Step 1.
		
		%
		%
		%
		%
		
		\textbf{Step 2:} We prove that 
		\[
		\limsup_{t \to \infty} \frac{|z(t)|}{F^{-1}(t)} < +\infty.
		\]
		
		By Step 1, there is $\lambda_1 > 0$ and $t_n \to \infty$ such that
		$|z(t_n)| < \lambda_1 F^{-1}(t_n)$. We also have that there exist $D>0$, $T_1>0$ such that
		$|\Gamma(t)| < D F^{-1}(t)$ for all $t \geq T_1$. 
		By Lemma~\ref{lemma.philogest}, there exists $c''>0$ and $\mu_1<1/2$ such that for all $\mu<\mu_1$ we have 
		$$\overline{\Phi}_f(\mu)\leq c''\mu/\log(1/\mu).$$ 
		Thus,
		we have $\overline{\Phi}_f(\mu) = o(\mu)$ as $\mu \to 0^+$.
		Hence, there is a $\mu_2 = \mu_2(D)$ such that			
		\[
		\overline{\Phi}_f(\mu) < \mu \cdot \frac{1 + D (1-\mu)}{(1+\mu D)(2+D)}, 
		\quad \mu < \mu_2(D) < \tfrac{1}{2}			
		\]	
		Put $\varepsilon = \varepsilon(\mu,D) = \frac{\mu}{2+D} < \tfrac{1}{4}$. Then,
		\[
		\overline{\Phi}_f(\mu) < \mu \cdot \frac{1 + D(1-\mu)}{(1+\mu D)(2+D)} 
		= \frac{\mu}{1+\mu D} - \frac{\mu}{2+D}.
		\]	
		Therefore
		\begin{equation} \label{eq.dd}
			\overline{\Phi}_f(\mu)+ \varepsilon(\mu,D) < \frac{\mu}{1+\mu D}, \quad \mu < \mu_2(D).
		\end{equation}
		Also for every $\varepsilon>0$ and $\mu < \tfrac{1}{2}$, there is an 
		$\tilde{x}_1(\mu,\varepsilon) > 0$ such that
		$
		f(\mu x) < \big(	\overline{\Phi}_f(\mu) + \varepsilon \big) f(x)$ for $x < \tilde{x}_1(\mu,\varepsilon)$. Hence 
		\[
		f(x) < \big(	\overline{\Phi}_f(\mu) + \varepsilon\big) f\!\Big(\frac{x}{\mu}\Big),
		\quad x \leq \mu \tilde{x}_1(\mu,\varepsilon).
		\]
		Let $\varepsilon = \varepsilon(\mu,D)$ as above; then with
		$x_1(\mu,D) := \mu \tilde{x}_1(\mu,\varepsilon(\mu,D))$, we have
		\begin{equation} \label{eq.ddd}
			f(x) < \Big(	\overline{\Phi}_f(\mu) + \frac{\mu}{2+D}\Big) f\!\Big(\frac{x}{\mu}\Big), 
			\quad x < x_1(\mu,D), 
		\end{equation}
		and, for any $\mu < \mu_2(D) < \tfrac{1}{2}$, \eqref{eq.dd} and \eqref{eq.ddd} yield 
		\begin{equation} \label{IV}
			f(x) < \frac{\mu}{1 + \mu D} f\!\left(\tfrac{1}{\mu}x\right), 
			\quad x < x_1(\mu,D), \; \mu < \mu_2(D) < \tfrac{1}{2}. 
		\end{equation}
		Now, take  
		\[
		\lambda_+ = D + \frac{1}{\mu}, 
		\quad \text{for some } \mu < \mu_2(D).
		\]
		We immediately have $\lambda_+ > D+2$; additionally, choose $\mu>0$ so small that $\lambda_+ > \lambda_1$. 
		Then
		\begin{equation} \label{III}
			\frac{1}{\lambda_+} = \frac{1}{D+1/\mu} = \frac{\mu}{1+\mu D}. 
		\end{equation}
		Also, there exists $T_2 > 0$ such that $F^{-1}(t) < x_1(\mu,D)$ for all $t \geq T_2$. Let 
		\[
		T_3 = \inf \left\{ t_j > {T_1 \lor T_2}: \; |z(t_j)| < \lambda_1 F^{-1}(t_j) \right\},
		\]
		and define
		\[
		z_+(t) = \lambda_+ F^{-1}(t), \quad t \geq T_3.
		\]
		Then
		\[
		z_+(T_3) = \lambda_+ F^{-1}(T_3) = \lambda_+ F^{-1}(t_{j^*}) > \lambda_1 F^{-1}(t_{j^*}) > |z(t_{j^*})| = |z(T_3)|.
		\]
		Also, for $t \geq T_3$, using the monotonicity of $F$ and the bound on $|\Gamma(t)|\leq DF^{-1}(t)$ which holds for $t\geq T_3\geq T_1$, we have
		\begin{equation} \label{I}
			f(z_+(t) + \Gamma(t)) \geq f\!\big(z_+(t) - D F^{-1}(t)\big)
			=	f\!\big((\lambda_+ - D)F^{-1}(t)\big)= f\!\left(\tfrac{1}{\mu} F^{-1}(t)\right),
		\end{equation}
		where we used the definition of $z_+$ and $\lambda_+$ in the last two identities.
		By \eqref{IV}, since $t \geq T_3 \geq T_2$, we have $F^{-1}(t) < x_1(\mu,D)$ for $\mu < \mu_2(D)$, so
		\begin{equation} \label{II}
			f\!\left(\tfrac{1}{\mu}F^{-1}(t)\right) > \frac{1+\mu D}{\mu} f(F^{-1}(t)). 
		\end{equation} 
		Thus, using \eqref{I}, \eqref{II} and \eqref{III}, we have 
		\[
		f\big(z_+(t) + \Gamma(t)\big) > \frac{1+\mu D}{\mu} f(F^{-1}(t)) 
		= \lambda_+ f(F^{-1}(t)), \quad t\geq T_3. 
		\]
		On the other hand, for $t\geq T_3$, $-z_{+}'(t) = \lambda_{+}f(F^{-1}(t))$, so 
		\[
		f\big(z_{+}(t) + \Gamma(t)\big) > - z_{+}'(t), \quad t \geq T_{3}.
		\]
		Since this inequality holds, $z_+(T_3)>z(T_3)$ and $z'(t)=-f(z(t)+\Gamma(t))$ for $t\geq T_3$, we have 
		\[
		z(t) < z_{+}(t)=\lambda_+ F^{-1}(t), \quad t \geq T_{3}.
		\]
		This gives an upper bound on $z$. Now we seek a corresponding lower bound. To this end, define
		\[
		z_{-}(t) = - z_{+}(t), \quad t \geq T_{3}.
		\]
		We established above that $|z(T_3)|<z_+(T_3)$; therefore $-z_+(T_3)<z(T_3)$, and so we have 
		\[
		z_{-}(T_{3}) = - z_{+}(T_{3}) < z(T_{3}),
		\]
		Note that
		$z_{-}'(t) = - z_{+}'(t) = \lambda_{+} f(F^{-1}(t))$ for $t \geq T_{3}$, and for $t \geq T_{3}$, we have 
		\[
		- f\big(z_{-}(t) + \Gamma(t)\big) 
		= - f\big(- z_{+}(t) + \Gamma(t)\big)=f(z_+(t)-\Gamma(t)),
		\]
		where we used the fact that $f$ is odd at the last step. Since $f$ is increasing and $\Gamma(t) \leq D F^{-1}(t)$ for $t \geq T_{3}$, we get
		\[
		- f\big(z_{-}(t) + \Gamma(t)\big) = f(z_+(t)-\Gamma(t)) \geq f(z_+(t)-DF^{-1}(t)).
		\]
		Now revisiting \eqref{I}, \eqref{II}, and \eqref{III} for $t\geq T_3$, we have that $f(z_+(t)-DF^{-1}(t))>\lambda_+f(F^{-1}(t))$.
		Therefore, for $t\geq T_3$ we have 
		\[
		- f\big(z_{-}(t) + \Gamma(t)\big)\geq f(z_+(t)-DF^{-1}(t))>\lambda_+f(F^{-1}(t))=z_-'(t).
		\]
		%
		Thus, we have that $z_-'(t)<	- f\big(z_{-}(t) + \Gamma(t)\big)$ for $t\geq T_3$, $z_-(T_3)<z(T_3)$ and  $z'(t)=	- f(z(t) + \Gamma(t))$ for $t\geq T_3$. 
		Hence
		\[
		z(t) > z_{-}(t) = - z_{+}(t) = - \lambda_{+} F^{-1}(t), 
		\quad t \geq T_{3}.
		\]
		Since we have already shown that $z(t) < \lambda_{+} F^{-1}(t)$ for $t \geq T_{3}$, we have 
		\[
		\frac{|z(t)|}{F^{-1}(t)} < \lambda_{+}, 
		\quad  t \geq T_{3},
		\]	
		and so 
		\[
		\limsup_{t \to \infty} \frac{|z(t)|}{F^{-1}(t)} <  +\infty
		\]
		as claimed.
	\end{proof}
	Now we prove the converse of the last result.
	\begin{theorem} \label{thm.bigOconverse}
		Suppose that $f\in C(\mathbb{R};\mathbb{R})$ is increasing, odd and obeys \eqref{eq.fglobalstable}, \eqref{eq.fasypres}, and \eqref{eq.fasypresmu}. Suppose $F$ is defined by \eqref{def.F}.  Let $g$ be a continuous function. If the continuous solution of \eqref{eq.odepert}  obeys 
		\begin{align*}
			x(t)=O(F^{-1}(t)), \quad t\to\infty,
		\end{align*}	
		then $g$ is such that \eqref{eq.g2} holds and $\Gamma$ be defined by \eqref{eq:Gamma} obeys \eqref{eq.GammaOFinv}.  
	\end{theorem}	
	\begin{proof}
		By hypothesis we have that there is $L>0$ such that 
		$|x(t)| \le L\,F^{-1}(t)$ for $t\geq 0$. Since $f$ is odd and increasing, we obtain
		\[
		|f(x(t))| = f(|x(t)|) \le f\!\big(L\,F^{-1}(t)\big),
		\qquad t\geq 0.
		\]
		Next we estimate 
		\begin{align*}
			\int_{0}^{t} |f(x(s))|\,ds
			&\le \int_{0}^{t} f\!\big(L\,F^{-1}(s)\big)\,ds \\[6pt]
			&= \int_{F^{-1}(t)}^{1} \frac{f(Lu)}{f(u)}\,du \\[6pt]
			&\leq  \int_{0}^{1} \frac{f(Lu)}{f(u)}\,du .
		\end{align*}
		\noindent\textbf{Case $L\le 1$.}  
		Since $f$ is increasing, $f(Lu)\le f(u)$ for $u>0$, so
		\[
		\int_{0}^{t} |f(x(s))|\,ds
		\;\le\; \int_{F^{-1}(t)}^{1} 1\,du
		\;\le\; 1 .
		\]
		Thus the integral is finite, and $t\mapsto f(x(t)) \in L^1(0,\infty)$.
		
		\noindent\textbf{Case $L>1$.}  
		Since $f$ is asymptotic preserving, we have that there is an $\epsilon_0\in (0,1)$ such that 
		\[
		\limsup_{x\to 0^+} \frac{f(1+\epsilon)x)}{f(x)} = \overline{\Phi}_f(1+\epsilon).
		\]
		Now, fix $\epsilon\in (0,\epsilon_0)$. Since $L>1$, there exists and integer $N=N(L)\geq 1$ 	such that $(1+\epsilon)^{N-1}< L \leq (1+\epsilon)^N$. Since $f$ is increasing, we have 
		\[
		\frac{f(Lx)}{f(x)}\leq \frac{f((1+\epsilon)^Nx)}{f(x)}
		=\prod_{j=1}^{N} \frac{f((1+\epsilon)^jx)}{f((1+\epsilon)^{j-1}x)}.
		\]
		Therefore 
		\[
		\limsup_{x\to 0^+} 	\frac{f(Lx)}{f(x)} 
		\leq \prod_{j=1}^{N} \limsup_{x\to 0^+}\frac{f((1+\epsilon)^jx)}{f((1+\epsilon)^{j-1}x)}
		\leq \overline{\Phi}_f(1+\epsilon)^N.
		\]
		Now $(N-1)\log(1+\epsilon)<\log L$, so $N<1+\log L/\log(1+\epsilon)$, so therefore
		\[
		\limsup_{x\to 0^+} 	\frac{f(Lx)}{f(x)}\leq \overline{\Phi}_f(1+\epsilon) \cdot \overline{\Phi}_f(1+\epsilon)^{\log L/\log(1+\epsilon)}
		= \overline{\Phi}_f(1+\epsilon) \cdot L^{\log \overline{\Phi}_f(1+\epsilon)/\log(1+\epsilon)}.
		\]
		Now, since we can fix $\epsilon\in (0,\epsilon_0)$, we have that there exist $c>1$ and $a>0$ (both independent of $L>1$) such that 
		\[
		\limsup_{x\to 0^+} 	\frac{f(Lx)}{f(x)}\leq c L^a.
		\] 
		Then, there exists $x_{1}=x_{1}(L)$ such that for all $x<x_{1}(L) \land 1$,
		implies $f(Lx) \le (c+1)\,L^{a}\,f(x)$. 
		Therefore, for all $t \geq 0$,
		\[
		\int_0^t |f(x(s))|\, ds \le 
		\int_0^{x_1(L)} (c+1)L^a \, du + \int_{x_1(L)}^{1} \frac{f(Lu)}{f(u)} \, du < +\infty.
		\]
		Hence, $t \mapsto f(x(t)) \in L^1(0,\infty)$, regardless of the value of $L$. Also, since $x(t) \to 0$ as $t \to \infty$, and 
		\[
		x(t)=x(0)-\int_0^t f(x(s))\,ds + \int_0^t g(s)\,ds,
		\]
		we see that $\int_0^t g(s)\,ds$ tends to a finite limit as $t\to\infty$, and therefore $\Gamma$ is well defined and obeys  
		\[
		\Gamma(t)=x(t)-\int_t^\infty f(x(s))\,ds, \quad t\geq 0.
		\]
		Now, we re--estimate the righthand side, using the same line as before
		\[
		\left| \int_t^\infty f(x(s)) \, ds \right| \le \int_t^\infty |f(x(s))| \, ds
		= \int_t^\infty f(|x(s)|) \, ds \le \int_t^\infty f(L F^{-1}(s)) \, ds
		= \int_0^{F^{-1}(t)} \frac{f(Lu)}{f(u)} \, du.
		\]
		Therefore
		\[
		\limsup_{t \to \infty} \frac{\left|\int_t^\infty f(x(s))\, ds \right|}{F^{-1}(t)}
		\leq \limsup_{t \to \infty} \frac{1}{F^{-1}(t)} \int_0^{F^{-1}(t)} \frac{f(Lu)}{f(u)} \, du
		=\limsup_{x\to 0^+} \frac{1}{x}\int_0^x \frac{f(Lu)}{f(u)}\,du.	
		\]
		As before, we consider the cases when $L\leq 1$ and $L>1$. In the former case
		we have 
		\[
		\frac{1}{x}\int_0^x \frac{f(Lu)}{f(u)}\,du\leq \frac{1}{x}\int_0^x 1\,du =1,
		\]
		so $L\leq 1$ implies 
		\[
		\limsup_{t \to \infty} \frac{\left|\int_t^\infty f(x(s))\, ds \right|}{F^{-1}(t)}\leq 1.
		\]
		If $L>1$, we have that there exists $x_1=x_1(L)>0$ such that $f(Lx)/f(x)\leq (c+1)L^a$ for all $x<x_1(L)\wedge 1$. Therefore for $x<x_1(L)\wedge 1$ we have
		\[
		\frac{1}{x}\int_0^x \frac{f(Lu)}{f(u)}\,du \leq \frac{1}{x}\int_0^x (c+1)L^a\,du = (1+c)L^a.
		\]
		Thus, if $L>1$ we have 
		\[
		\limsup_{t \to \infty} \frac{\left|\int_t^\infty f(x(s))\, ds \right|}{F^{-1}(t)}\leq (1+c)L^a.
		\]
		Therefore, irrespective of whether $L \in (0,1]$ or $L > 1$,
		\[
		\limsup_{t \to \infty} \frac{\left| \int_t^\infty f(x(s)) \, ds \right|}{F^{-1}(t)} < +\infty.
		\]
		Thus,
		\[
		\frac{|\Gamma(t)|}{F^{-1}(t)} \leq \frac{|x(t)|}{F^{-1}(t)} + \frac{\left| \int_t^\infty f(x(s)) \, ds \right|}{F^{-1}(t)}
		\leq L+\frac{\left| \int_t^\infty f(x(s)) \, ds \right|}{F^{-1}(t)},
		\]
		so since the last term has a finite limsup, we have 
		\[
		\limsup_{t \to \infty} \frac{|\Gamma(t)|}{F^{-1}(t)} \leq L+\max(1,(1+c)L^a).
		\]
		and so $\Gamma(t)=O(F^{-1}(t))$ as $t\to\infty$, as required.
	\end{proof}
	Combining the last two Theorems, we have proven Theorem~\ref{thm.extpertodebdd}.

	\section{Stochastic Differential Equations}
	In this section, we consider the asymptotic behaviour of the stochastic differential equation 
	\begin{equation}  \label{eq.sde7}
		dX(t) = -f\big(X(t)\big)\,dt + \sigma(t)\,dB(t), \quad t\geq 0.
	\end{equation}
	We let $X(0)=\xi$, where $\xi$ is deterministic, and $B$ is a standard one dimensional Brownian motion. We also assume that $f$ is continuous with $f(0)=0$, $xf(x)>0$ for $x\neq 0$, and that $\sigma$ is continuous. If we assume that $f$ is locally Lipschitz continuous, then there is a unique continuous, adapted (to the natural filtration generated by $B$) solution to the differential equation. Moreover, the solution is defined on $[0,\infty)$ a.s. We work on the triple $(\Omega,\mathcal{F},\mathbb{P})$ and use the notation 
	\[
	\mathcal{F}^B(t):=\sigma\{ B(s): 0\leq s\leq t\}, \quad t\geq 0
	\] 
	for the members of the natural filtration $(\mathcal{F}^B(t))_{t\geq 0}$ generated by $B$. 
	
	In the case that $\sigma\in L^2(0,\infty)$ it is known that $X(t)\to 0$ as $t\to\infty$ a.s. In that case, we have by the martingale convergence theorem that 
	\[
	\lim_{t\to\infty} \int_0^t \sigma(s)\,dB(s) \quad \text{ exists and is finite, a.s.}
	\]
	Let $\Omega^\ast$ be the event of all outcomes for which this convergence takes place, and for which the solution $X$ of the SDE is well--defined on $[0,\infty)$. We have that $\Omega^\ast$ is an almost sure event. For each 
	$\omega\in \Omega^\ast$, we may define the function 
	\[
	\Gamma(t,\omega) = -\int_{t}^{\infty} \sigma(s)\, dB(s),\quad t\geq 0
	\]
	i.e., for each $\omega\in \Omega^\ast$ we have 
	\[
	\Gamma(t,\omega) = -\left\{ \lim_{T \to \infty} \int_{0}^{T} \sigma(s)\, dB(s) - \int_{0}^{t} \sigma(s)\, dB(s) \right\}(\omega), \quad t\geq 0.
	\]
	Notice that, for each $\omega\in \Omega^\ast$, we have $t\mapsto \Gamma(t,\omega)$ tends to zero as $t\to\infty$, and that $t\mapsto \Gamma(t,\omega)$ is continuous. Notice moreover that $\Gamma$ cannot be defined on an event of positive probability if $\sigma\not\in L^2(0,\infty)$, again by the martingale convergence theorem. 
	
	Our first result gives necessary and sufficient conditions for the preservation of the asymptotic behaviour of the unperturbed ordinary differential equation. However, the asymptotic condition on the noise term is not directly checkable, so in a subsequent result, we replace it with a deterministic condition on the noise that can be checked directly.
	
	\begin{theorem} \label{thm.Xwithtailmart}
		Suppose that $f\in C(\mathbb{R};\mathbb{R})$ is increasing, odd and obeys \eqref{eq.fglobalstable}, \eqref{eq.fasypres}, and \eqref{eq.fasypresmu}. Suppose $F$ is defined by \eqref{def.F}.  Let $\sigma$ be a continuous function, and let $X$ be the unique continuous adapted process satisfying \eqref{eq.sde7}.
		Then the following are equivalent:
		\begin{enumerate}
			\item[(A)] $\sigma \in L^2(0,\infty)$, \quad $\lim_{t \to \infty} \dfrac{\int_{t}^{\infty} \sigma(s)\, dB(s)}{F^{-1}(t)} = 0$, \; a.s.
			\item[(B)] There is a random variable $\Lambda$, which takes only the values $1,0,-1$, such that
			\[
			\lim_{t \to \infty} \frac{X(t)}{F^{-1}(t)} = \Lambda, \quad \text{a.s.}
			\]
		\end{enumerate}
	\end{theorem}
	\begin{proof} We start by showing (A) $\Rightarrow$ (B). By hypothesis, there is a unique (in the sense of uniqueness up to indistinguishibility) continuous adpated process $X$ which is a solution of the SDE \eqref{eq.sde7}. We work on the a.s. event $\Omega_2$ on which $X$ is pathwise unique. Define 
		\[
		U(t) = \int_{0}^{t} \sigma(s)\, dB(s), \quad t\geq0,
		\]
		and let  $\Omega_3$ be the a.s. event
		\begin{align*}
			\Omega_3 &= \{\omega : U \text{ is a well--defined continuous adapted process}\}. 
		\end{align*}
		Note that since $\sigma \in L^2(0,\infty)$, we have 
		\[
		U(t) \xrightarrow[t \to \infty]{} U(\infty) = \int_{0}^{\infty} \sigma(s)\, dB(s), \quad \text{a.s. on $\Omega_3$}.
		\]
		For each $\omega$ in the a.s. event $\Omega_{1}$ where the limit is finite, define for $t \geq 0$ 
		\[
		\Gamma(t,\omega) = -\left\{ \lim_{T \to \infty}\left( \int_{0}^{T} \sigma(s)\, dB(s)\right)(\omega) 
		- \left(\int_{0}^{t} \sigma(s)\, dB(s)\right)(\omega) \right\}.
		\]
		Hence $\Gamma(t) = -(U(\infty) - U(t))$ for $t \geq 0$, and is well--defined on $\Omega_1\cap \Omega_3$.	Consider next
		the process $V$, which obeys  	
		\[
		V(t) := X(t) - U(t), \quad t \geq 0.
		\]
		Then $V$ is well--defined on $[0,\infty)$ on the a.s. event $\Omega_2\cap \Omega_3$. Moreover since $X$ and $U$ have the same It\^o integral in common,
		\[
		V(t) = V(0)-\int_0^t f(X(s)\,ds, \quad t\geq 0.
		\]
		Thus, since $f$ is continuous and the paths of $X$ are continuous on $\Omega_2$, $t \mapsto V(t)$ is a process with $C^1(0,\infty)$ paths a.s., and we have on $\Omega_2\cap \Omega_3$ that
		\[
		V'(t) = -f(X(t)), \quad t \geq 0.
		\]
		Now, define for all $\omega \in \Omega_4 := \Omega_1 \cap \Omega_2\cap \Omega_3$
		\[
		Z(t,\omega) := V(t,\omega) + U(\infty,\omega), \quad t \geq 0.
		\]
		Note that $\Omega_4$ is an a.s. event, and that $V(\omega)$ is in $C^1(0,\infty)$. Thus, for $\omega \in \Omega_4$, we have for $t\geq 0$
		\begin{align*}
			Z'(t,\omega) &= V'(t,\omega) = -f(X(t,\omega)) \\
			&= -f\big(V(t,\omega) + U(t,\omega)\big)\\
			&= - f\big( Z(t,\omega) - U(\infty,\omega) + U(t,\omega) \big)\\
			&=- f\big( Z(t,\omega) + \Gamma(t,\omega) \big).
		\end{align*}
		If (A) holds, we have that
		\[
		\frac{\Gamma(t,\omega)}{F^{-1}(t)} \;\longrightarrow\; 0, 
		\quad \text{as } t \to \infty \;\; \text{for all } \omega \in \Omega_1,
		\]
		where $\Omega_1$ is an almost sure event.  
		For each $\omega \in \Omega_4$ we have
		\[
		\left\{
		\begin{aligned}
			Z'(t,\omega) &= - f\big( Z(t,\omega) + \Gamma(t,\omega) \big), \quad t \geq 0, \\
			\Gamma(t,\omega) &= o\!\big(F^{-1}(t)\big), \quad t \to \infty, \\
			X(t,\omega) &= Z(t,\omega) + \Gamma(t,\omega), \quad t \geq 0.
		\end{aligned}
		\right.
		\]
		Therefore, applying Theorem~\ref{thm.ThZ} to $Z(\omega)$ for each $\omega\in \Omega_4$ we have that 
		\[
		\lim_{t \to \infty} \frac{Z(t,\omega)}{F^{-1}(t)} 
		= \lambda(\omega), \quad \omega \in \Omega_4,
		\]
		where $\lambda(\omega) \in \{-1,0,1\}$. Clearly  
		since $\Gamma(t,\omega) = o(F^{-1}(t))$ for all $\omega \in \Omega_4$, we have that
		\[
		\frac{X(t,\omega)}{F^{-1}(t)} \;\longrightarrow\; \lambda(\omega), 
		\quad t \to \infty, \;\; \forall \omega \in \Omega_4.
		\]
		This gives (B), and we see that $\lambda$ is a $\mathcal{F}^B(\infty)$--measurable random variable, which takes values in the set $\{-1,0,1\}$ with probability one.
		
		We now need to show that (B) implies (A). Write \eqref{eq.sde7} in integral form 
		\[
		X(t)=\xi-\int_0^t f(X(s))\,ds + \int_0^t \sigma(s)\,dB(s), \quad t\geq 0.
		\]
		Let $\Omega^\ast$ be the almost sure event on which there is a unique continuous adapted solution $X$, and on which $X$ additionally obeys $X(t)\sim \lambda F^{-1}(t)$ as $t\to\infty$, where $\lambda\in \{-1,0,1\}$. By a recapitulation of deterministic arguments, we can show for each $\omega\in \Omega^\ast$ that $t\mapsto f(X(t,\omega))$ is in $L^1(0,\infty)$, so therefore, since $X(t,\omega)\to 0$ as $t\to\infty$ for all $\omega\in \Omega^\ast$, we see for $\omega\in \Omega^\ast$ that 
		\[
		\lim_{t\to\infty} \left(\int_0^t \sigma(s)\,dB(s)\right)(\omega) \text{ exists and is finite}.
		\] 
		By the martingale convergence theorem, it must follow that $\sigma$ is in $L^2(0,\infty)$, which is the first part of (A). Moreover, for all $\omega\in\Omega^\ast$ and $t\geq 0$, we have that $\Gamma(t,\omega)$ is well--defined, and using the fact that \[
		0=\xi-\int_0^t f(X(s))\,ds + \int_0^t \sigma(s)\,dB(s), \quad t\geq 0.
		\] 
		on $\Omega^\ast$, we have for every $\omega\in \Omega^\ast$ that 
		\[
		\Gamma(t,\omega)=X(t,\omega)-\int_{t}^\infty f(X(s,\omega))\,ds, \quad t\geq 0.
		\]
		Using the fact that $X(t,\omega)/F^{-1}(t)\to \lambda(\omega)$ as $t\to\infty$, where $\lambda(\omega)$ is in $\{1,0,-1\}$, we may again reuse deterministic arguments to show that 
		\[
		\lim_{t\to\infty} \frac{\int_t^\infty f(X(s,\omega))\,ds}{F^{-1}(t)}=\lambda(\omega),
		\]
		and so $\Gamma(t,\omega)/F^{-1}(t)\to 0$ as $t\to\infty$, for each $\omega\in \Omega^\ast$, which proves the second part of (A).
	\end{proof} 
	
	In practice, the last result is not easily used, because the condition 
	\[
	\lim_{t\to\infty} \frac{\int_t^\infty \sigma(s)\,dB(s)}{F^{-1}(t)}=0
	\]
	cannot be checked a priori (since the integral depends on the future of the path). However, the asymptotic behaviour of this integral can be determined in terms of a well--defined deterministic function, and once such information is available, it is possible to arrive at conditions which can be checked a priori. The next lemma estimates the decay rate on $\Gamma(t)$ as $t\to\infty$.
	
	\begin{lemma} \label{lemma.tailmartasy} Assume $\sigma$ is continuous, $\sigma \in L^2(0, \infty)$ and obeys
		\begin{equation} \label{d}
			\int_t^\infty \sigma^2(s) \, ds > 0, \quad \forall t \geq 0.
		\end{equation}
		Then,
		\begin{multline*}
			\limsup_{t \to \infty} \frac{\int_t^\infty \sigma(s) \, dB(s)}{\sqrt{2 \int_t^\infty \sigma^2(s) \, ds \log_2 \left( \frac{1}{\int_t^\infty \sigma^2(s) \, ds} \right)}} = 1\\
			= -\liminf_{t \to \infty} \frac{ \int_t^\infty \sigma(s) \, dB(s)}{\sqrt{2 \int_t^\infty \sigma^2(s) \, ds \log_2 \left( \frac{1}{\int_t^\infty \sigma^2(s) \, ds} \right)}}, \quad \text{a.s.}
		\end{multline*}
		where
		\[
		\log_2 x := \log(\log(x)).
		\]
	\end{lemma}
	\begin{proof}
		Since $\sigma$ is in $L^2(0,\infty)$ we may define 
		\[
		T^* = \int_0^\infty \sigma^2(s) \, ds, 
		\]
		Let
		\[
		M(t) = \int_0^t \sigma(s) \, dB(s), \quad t \geq 0 \text{ is a local martingale.}
		\]
		Then $M$ has square variation given by
		\[
		\langle M \rangle (t) = \int_0^t \sigma^2(s) \, ds \xrightarrow[t \to \infty]{} T^*.
		\]
		Then, by the martingale time change theorem, there is a BM $\widetilde{B}$ such that
		\[
		M(t) = \widetilde{B}(\langle M \rangle (t)), \quad 0 \leq t < \infty.
		\]
		Using this we can write 
		\begin{align*}
			\limsup_{t \to \infty} \frac{\int_t^\infty \sigma(s) \, dB(s)}{\sqrt{2 \int_t^\infty \sigma^2(s) \, ds \cdot \log \log \left( \frac{1}{\int_t^\infty \sigma^2(s) \, ds} \right) }}
			&= \limsup_{t \to \infty} 
			\frac{M(\infty) - M(t)}
			{\sqrt{2\bigl(T^* - \langle M \rangle (t)\bigr)\log\log\frac{1}{(T^* - \langle M \rangle (t))}}}\\
			&= \limsup_{t \to \infty} 
			\frac{\tilde{B}\bigl(\langle M \rangle (\infty)\bigr) - \tilde{B}\bigl(\langle M \rangle (t)\bigr)}
			{\sqrt{2\bigl(T^* - \langle M \rangle (t)\bigr)\log\log \frac{1}{(T^* - \langle M \rangle (t))}}}.
		\end{align*}
		If $t \to \infty$, and $s := \langle M \rangle (t)$, then $s \to T^*$. Making this change of variable, we get  
		\[
		\limsup_{t \to \infty} \frac{\int_t^\infty \sigma(s) \, dB(s)}{\sqrt{2 \int_t^\infty \sigma^2(s) \, ds \cdot \log \log \left( \frac{1}{\int_t^\infty \sigma^2(s) \, ds} \right) }}	
		= \limsup_{s \uparrow T^*} 
		\frac{\tilde{B}(T^*) - \tilde{B}(s)}
		{\sqrt{2(T^* - s)\log\log \frac{1}{T^* - s}}}.
		\]
		Let $u = T^* - s$, so that we now have
		\[
		\limsup_{t \to \infty} \frac{\int_t^\infty \sigma(s) \, dB(s)}{\sqrt{2 \int_t^\infty \sigma^2(s) \, ds \cdot \log \log \left( \frac{1}{\int_t^\infty \sigma^2(s) \, ds} \right) }}
		= \limsup_{u \to 0^+} 
		\frac{\tilde{B}(T^*) - \tilde{B}(T^* - u)}
		{\sqrt{2u \log\log (1/u)}}.
		\]
		Now define 
		\[
		\bar{B}(u) := \tilde{B}(T^*) - \tilde{B}(T^* - u), 
		\quad 0 \leq u \leq T^*.
		\]
		Then $\bar{B}$ is also a standard Brownian motion, and we have that
		\begin{align*}
			\limsup_{t \to \infty}
			\frac{\int_t^\infty \sigma(s)\, dB(s)}
			{\sqrt{2 \int_t^\infty \sigma^2(s)\, ds \; \log\log \left( \frac{1}{\int_t^\infty \sigma^2(s) \, ds} \right) }}
			&= \limsup_{u \to 0^+}
			\frac{\bar{B}(u)}
			{\sqrt{2u \log\log (1/u)}}.
		\end{align*}
		Now, by the law of the iterated logarithm for standard Brownian motion, the last $\limsup$ is $1$ with probability $1$, proving the first limit. 
		Likewise, taking the $\liminf$ (and observing that $-\tilde{B}$ is a standard Brownian motion also), we get
		\[
		\liminf_{t \to \infty}
		\frac{\int_t^\infty \sigma(s)\, dB(s)}
		{\sqrt{2 \int_t^\infty \sigma^2(s)\, ds \;\log\log \left( \frac{1}{\int_t^\infty \sigma^2(s) \, ds} \right)}}
		= \liminf_{u \to 0^+} \frac{\bar{B}(u)}{\sqrt{2u \log\log (1/u)}}
		= -1, \; \text{a.s.}
		\]
		as needed.
	\end{proof}
	Using Lemma \ref{lemma.tailmartasy} and Theorem \ref{thm.Xwithtailmart}, we get the following result, which characterises sharply a rate of decay of the noise at which there is a transition from rates of decay like the unperturbed differential equation $y'=-f(y)$ to a slower rate of decay. 
	As we explain, the critical rate is  
	\[
	\Sigma^2(t):=\frac{F^{-1}(t)^2}{ \log\log 1/F^{-1}(t)^2}, \quad t\geq e^e.
	\] 
	More concretely, if the rate of decay of the noise intensity is sufficiently fast so that 
	\[
	\lim_{t\to\infty} \frac{\int_t^\infty \sigma^2(s)\,ds}{\Sigma^2(t)} = 0, 
	\]
	then $X(t)\sim \lambda F^{-1}(t)$ as $t\to\infty$ a.s. (where $\lambda\in \{1,0,-1\}$), while if the noise intensity decays sufficiently slowly so that 
	\[
	\lim_{t\to\infty} \frac{\int_t^\infty \sigma^2(s)\,ds}{\Sigma^2(t)} = +\infty, 
	\] 
	then the rate of decay of the solution is strictly slower than in the deterministic case, because we can conclude that 
	\[
	\limsup_{t\to\infty} \frac{|X(t)|}{F^{-1}(t)}=+\infty, \quad \text{a.s.},
	\]
	
	Before we prove the result, we show that the existence of a limit 
	\[
	\lim_{t\to\infty} \frac{\int_t^\infty \sigma^2(s)\,ds}{\Sigma^2(t)}=\mu \in [0,\infty]
	\]
	is equivalent to the limit
	\[
	\lim_{t\to\infty} \frac{\int_t^\infty \sigma^2(s)\,ds\log\log 1/\int_t^\infty \sigma^2(s)\,ds}{F^{-1}(t)^2}=\mu \in [0,\infty].
	\]
	To see this, notice that if $\varphi(x)=x\log\log(1/x)$ for $x\in 0,e^{-e})$, we have that $\varphi$ is increasing, and moreover that its inverse obeys 
	\[
	\varphi^{-1}(x)\sim \frac{x}{\log\log(1/x)}, \quad x\to 0^+.
	\]
	so in terms of $\varphi$ the limit statements are equivalent to 
	\[
	\lim_{t\to\infty} \frac{\int_t^\infty \sigma^2(s)\,ds}{\varphi^{-1}(F^{-1}(t)^2)}=\mu \in [0,\infty],
	\]
	and
	\[
	\lim_{t\to\infty} \frac{\varphi(\int_t^\infty \sigma^2(s)\,ds)}{F^{-1}(t)^2}=\mu \in [0,\infty],
	\] 
	respectively. The limits are equal by virtue of the fact that 
	if $\phi$ is $\varphi$ or $\varphi^{-1}$, then 
	$\phi(\mu x)/\phi(x)\to \mu$ as $x\to 0^+$ for all $\mu>0$. The  the cases when $\mu=0$ and $\mu=\infty$ can are dealt similarly. We show now how to proceed in the case $\mu=0$. Suppose $\mu=0$ in the first limit. Then for every $\epsilon>0$ there is a $T(\epsilon)>0$ such that 
	\[
	\frac{\int_t^\infty \sigma^2(s)\,ds}{\varphi^{-1}(F^{-1}(t)^2)}<\epsilon, \quad t\geq T(\epsilon). 
	\]
	Thus 
	\[
	\varphi\left(\int_t^\infty \sigma^2(s)\,ds\right)< \varphi(\epsilon \varphi^{-1}(F^{-1}(t)^2)), \quad t\geq T(\epsilon).
	\]
	Hence 
	\[
	\frac{\varphi\left(\int_t^\infty \sigma^2(s)\,ds\right)}{F^{-1}(t)^2}
	< \frac{\varphi(\epsilon \varphi^{-1}(F^{-1}(t)^2))}{\varphi( \varphi^{-1}(F^{-1}(t)^2))}\to \epsilon, \quad t\to\infty,	
	\]
	so sending $\epsilon\to 0^+$, we see that $\mu=0$ in the first statement implies $\mu=0$ in the second. The proof that $\mu=0$ in the second statement implies $\mu=0$ in the first is similar (deduce an inequality,  take $\varphi^{-1}$ on both sides of the inequality, use $\varphi^{-1}(\epsilon x)\sim \epsilon \varphi^{-1}(x)$ as $x\to 0^+$, and finally let $\epsilon\to 0^+$). The equivalence of the statements in the case when $\mu=\infty$ can be obtained by an identical argument by considering the reciprocals of the functions just studied, and then taking limits.  
	
	\begin{theorem}  \label{thm.Xwithouttailmart}.
		Suppose that $f\in C(\mathbb{R};\mathbb{R})$ is increasing, odd and obeys \eqref{eq.fglobalstable}, \eqref{eq.fasypres}, and \eqref{eq.fasypresmu}. Suppose $F$ is defined by \eqref{def.F}. 
		Suppose $\sigma$ is continuous, $\sigma \in L^2(0,\infty)$ and obeys \eqref{d}, and that there exists $\mu \in [0,\infty]$ such that
		\[
		\mu := \lim_{t \to \infty} \frac{\int_t^\infty \sigma^2(s)\, ds }{F^{-1}(t)^2/\log\log(1/F^{-1}(t)^2)}\in [0,\infty]
		\]
		is well--defined. Then:
		\begin{itemize}
			\item[(i)] If $\mu = 0$, then there exists a $\mathcal{F}^B(\infty)$--measurable random variable $\lambda$ with $\mathbb{P}[\lambda\in \{-1,0,1\}]=1$ such that 
			\[
			\mathbb{P}\left[ \lim_{t \to \infty} \frac{X(t)}{F^{-1}(t)} =\lambda \right] = 1.
			\]
			\item[(ii)] If $\mu \in (0,\infty)$, then
			\[
			\mathbb{P}\left[ \lim_{t \to \infty} \frac{X(t)}{F^{-1}(t)} \in \{-1,0,1\} \right] = 0.
			\]
			\item[(iii)] If $\mu=+\infty$, then 
			\[
			\mathbb{P}\left[ \limsup_{t \to \infty} \frac{|X(t)|}{F^{-1}(t)} =+\infty \right] = 1.
			\]
		\end{itemize}
	\end{theorem}
	\begin{proof}
		We start by noticing that the asymptotic estimate on $\int_t^\infty \sigma^2(s)\,ds$ is equivalent to 
		\[
		\mu := \lim_{t \to \infty} \frac{\int_t^\infty \sigma^2(s)\, ds \; \log\log \left( \frac{1}{\int_t^\infty \sigma^2(s) \, ds} \right)}{F^{-1}(t)^2} \in [0,\infty], 
		\]
		by virtue of the discussion before the theorem. 
		
		We prove part (i) first, in which $\mu=0$. Define $\varphi(x)=x\log_2(1/x)$ for $0<x<e{-e}$. On this interval, $\varphi$ is increasing and also $\lim_{x\to 0^+} \varphi(x)=0$.
		
		By Lemma~\ref{lemma.tailmartasy}, we have 
		\[
		\limsup_{t \to \infty} \frac{|\Gamma(t)|}{\sqrt{2\varphi\!\left(\int_t^\infty \sigma^2(s)\, ds\right)}} = 1,\quad \text{a.s.}
		\]
		Since $\mu = 0$, we have
		\[
		\lim_{t \to \infty} \frac{\varphi\!\left(\int_t^\infty \sigma^2(s)\, ds\right)}{F^{-1}(t)^2} = 0.
		\]
		Hence,
		\[
		\limsup_{t \to \infty} \frac{|\Gamma(t)|}{F^{-1}(t)}
		= \limsup_{t \to \infty} 
		\left\{ \frac{|\Gamma(t)|}{\sqrt{2\varphi\!\left(\int_t^\infty \sigma^2(s)\, ds\right)}}
		\times
		\sqrt{\frac{2\varphi\!\left(\int_t^\infty \sigma^2(s)\, ds\right)}{F^{-1}(t)^2}}
		\right\}
		= 0, \quad \text{a.s.}
		\]
		Therefore, we have by Theorem \ref{thm.Xwithtailmart} that
		\[
		\lim_{t \to \infty} \frac{X(t)}{F^{-1}(t)} = \lambda \in \{-1,0,1\}, 
		\quad \text{a.s.},
		\]
		and the proof is complete.
		
		We now prove part (ii). Let
		\[
		A = \left\{ \omega : \lim_{t \to \infty} \frac{X(t,\omega)}{F^{-1}(t)} = \lambda(\omega) \in \{-1,0,1\} \right\},
		\]
		and assume, by way of contradiction, that $\mathbb{P}(A) > 0$. Therefore, for all $\omega \in A$,
		\[
		X(t,\omega) \sim \lambda(\omega) F^{-1}(t), \quad t \to \infty.
		\]
		Since $f$ is odd and obeys \eqref{eq.fasypres} and \eqref{eq.fasypresmu}, we have that 
		\[
		f(X(t,\omega)) \sim f(\lambda(\omega) F^{-1}(t)), \quad t \to \infty,
		\]
		in the case $\lambda\neq 0$ and, in the case where $\lambda=0$, we have $f(X(t,\omega))=o(f\circ F^{-1}(t)$ as $t\to\infty$.
		
		We consider the case where $\lambda=0$ first. Suppose this occurs on the event $A_0$, which we suppose to  be a zero probability event. In that case, since $t\mapsto f\circ F^{-1}(t)$ is in $L^1(0,\infty)$. Moreover, we have that $X(t)\to 0$ as $t\to\infty$, and by hypothesis we have $\sigma$ in $L^2(0,\infty)$ and hence $U(t)$ tend to a finite limit also. Therefore, proceeding as before, we get on $A_0$ 
		\[
		\Gamma(t) = X(t)-\int_t^\infty f(X(s))\,ds
		\] 
		Now, since $f(X(t))=o((f\circ F^{-1})(t))$ as $t\to\infty$ on $A_0$, 
		\[
		\int_t^\infty f(X(s))\,ds = o(F^{-1}(t)), \quad t\to\infty,
		\]
		and so on $A_0$, we have $\Gamma(t)=o(F^{-1}(t))$ as $t\to\infty$. 
		
		By Lemma~\ref{lemma.tailmartasy}, we have that
		\[
		\limsup_{t \to \infty} 
		\frac{|\Gamma(t)|}{\sqrt{2 \, \varphi\!\left(\int_t^\infty \sigma^2(s)\, ds\right)}} 
		= 1, 
		\quad \text{a.s.}
		\]
		Let the event on which this holds be $\Omega_6$. By hypothesis we have  
		\[
		\mu = \lim_{t \to \infty} \frac{\varphi\!\left(\int_t^\infty \sigma^2(s)\, ds \right)}{(F^{-1}(t))^2} > 0.
		\]
		Therefore, for $\omega \in A_0 \cap \Omega_6$ (which is an event of positive probability),
		\[
		\limsup_{t \to \infty} \frac{|\Gamma(t,\omega)|}{F^{-1}(t)}
		= \limsup_{t \to \infty} \frac{|\Gamma(t,\omega)|}
		{\sqrt{2 \varphi\!\left(\int_t^\infty \sigma^2(s)\, ds\right)}} 
		\cdot
		\frac{\sqrt{2 \varphi\!\left(\int_t^\infty \sigma^2(s)\, ds\right)}}{F^{-1}(t)}=\sqrt{2\mu},
		\]
		where we interpret the righthand side as $+\infty$ in the case $\mu=+\infty$. But this contradicts the conclusion that $X(t)=o(F^{-1}(t))$ on $A_0$, so we have that $\mathbb{P}[A_0]=0$. 
		
		We now turn to the cases where $\lambda\neq 0$. Consider $A_+=\{\omega:\lambda(\omega)>0\}$ and $A-=\{ \omega:\lambda(\omega)<0\}$. We have assumed $\mathbb{P}[A_+\cup A_-]>0$. Suppose that $\mathbb{P}[A_+]>0$; we will show that this leads to a contradiction. The case for $A_-$ follows similarly, exploiting the fact that $f$ is odd, so we only give a proof for $A_+$. 
		
		Since $X(t)\to 0$ as $t\to\infty$ and $\sigma\in L^2(0,\infty)$, on $A_+$ we can once again write
		\[
		\Gamma(t) = X(t) - \int_t^\infty f(X(s)) \, ds,
		\]
		Since $\lambda=+1$, we see that both terms on the righthand side are asymptotic to $F^{-1}(t)$ as $t\to\infty$, so the righthand side is $o(F^{-1}(t))$ as $t\to\infty$. However, by reusing the argument above in the case $\lambda=0$, we have that 
		\[
		\limsup_{t\to\infty} \frac{|\Gamma(t)|}{F^{-1}(t)}=\sqrt{2\mu}>0,
		\] 
		giving a contradiction to the supposition that $\mathbb{P}[A_+]>0$, so $\mathbb{P}[A_+]=0$.
		As we noted before, a symmetric argument deals with $A_-$ and so $\mathbb{P}[A_-]=0$. Therefore, we have that $A=A_0\cup A_+\cup A_-$ is a zero probability event, completing the proof of part (ii).  
		
		For part (iii), if we assume there is an event $A'$ of positive probability on which $X(t)=O(F^{-1}(t))$ as $t\to\infty$, we can prove as before on $A'$ that $t\mapsto f(X(t))$ is in $L^1(0,\infty)$ and once again that on $A'$ that $\Gamma$ is given by 
		\[
		\Gamma(t) = X(t) - \int_t^\infty f(X(s)) \, ds.
		\]
		Then, using the argument from Theorem \ref{thm.bigOconverse} applied pathwise in $A'$, we see that on $A'$
		\[
		\int_t^\infty f(X(s)) \, ds = O(F^{-1}(t)), \quad t\to\infty.
		\]
		Since $X(t)=O(F^{-1}(t))$ as $t\to\infty$ on $A'$, we have that 
		$\Gamma(t)=O(F^{-1}(t))$ as $t\to\infty$ on $A'$. However, since 
		$\varphi(\int_t^\infty \sigma^2(s)\,ds)/F^{-1}(t)^2\to \infty$ as $t\to\infty$, we get a.s. that 
		\[
		\limsup_{t \to \infty} \frac{|\Gamma(t,\omega)|}{F^{-1}(t)}
		= \limsup_{t \to \infty} \frac{|\Gamma(t,\omega)|}
		{\sqrt{2 \varphi\!\left(\int_t^\infty \sigma^2(s)\, ds\right)}} 
		\cdot
		\frac{\sqrt{2 \varphi\!\left(\int_t^\infty \sigma^2(s)\, ds\right)}}{F^{-1}(t)}=+\infty,
		\]
		which contradicts what we deduced earlier i.e., that $\Gamma(t)=O(F^{-1}(t))$ as $t\to\infty$ on $A'$, where $A'$ is an event of positive probability. Therefore, the original supposition that $A'$ is an event of positive probability must be false, and so 
		$X(t)=O(F^{-1}(t))$ as $t\to\infty$ can only happen on an event whose probability is zero. From this (iii) follows. 
	\end{proof}
	
	In the last Theorem, two side conditions on $\sigma$ were required, namely that $\sigma$ obeys \eqref{d} and that $\sigma\in L^2(0,\infty)$. These conditions do not restrict our analysis, but need to be treated separately from the main proof. 
	
	In the case that \eqref{d} does not hold, we have that there exists some $T\geq 0$ such that $\int_t^\infty \sigma^2(s)\,ds =0$ for all $t\geq T$. This implies that $\sigma(t)=0$ a.e. for $t\geq T$. 
	Note that $T$ is deterministic. Writing the SDE in integral form, we have 
	\[
	X(t)=X(T)-\int_T^t f(X(s))\,ds + \int_T^t \sigma(s)\,dB(s), \quad t\geq T.
	\]
	Since $\int_t^\infty \sigma^2(s)\,ds=0$ for all $t\geq T$,  we have that 
	\[
	\int_T^t \sigma(s)\,dB(s) = 0, \quad \text{for all $t\geq T$, a.s.}.
	\] 
	Hence for $t\geq T$, the stochastic differential equation reads
	\[
	X(t)=X(T)-\int_T^t f(X(s))\,ds, \quad t\geq T, \text{a.s.}
	\] 
	and so 
	\[
	X'(t)=-f(X(t)), \quad \text{a.e. on $t\geq T$, a.s.} 
	\]  
	Arguing pathwise, if $X(T)=0$, we have $X(t)=0$ for all $t\geq T$, and so $X(t)=o(F^{-1}(t))$ as $t\to\infty$. If $X(T)>0$, we have that $X(t)\sim F^{-1}(t)$ as $t\to\infty$, while if $X(T)<0$, we have $X(t)\sim -F^{-1}(t)$ as $t\to\infty$. Therefore, overall we get 
	\[
	\lim_{t\to\infty} \frac{X(t)}{F^{-1}(t)}=\text{sgn}(X(T)), \quad \text{a.s.}
	\]
	where $\text{sgn}$ denotes the sign (or signum) function. In this case therefore, $\lambda=\text{sgn}(X(T))$ which is a random variable which can assume only the values $\lambda\in \{-1,0,1\}$, as we have the same conclusion as in part (i) of Theorem~\ref{thm.Xwithouttailmart}. This is reasonable, as in this case (where \eqref{d} does not hold), the forcing term switches off entirely after a certain time. Therefore, we can think of this as a extreme case of the situation covered by part (i), in which the noise intensity tends to zero sufficiently quickly. 
	
	We have shown the following.
	
	\begin{theorem}
		\label{thm.Xwithouttailmart1}.
		Suppose that $f\in C(\mathbb{R};\mathbb{R})$ is increasing, odd and obeys \eqref{eq.fglobalstable}, \eqref{eq.fasypres}, and \eqref{eq.fasypresmu}. Suppose $F$ is defined by \eqref{def.F}. 
		Suppose $\sigma$ is continuous, $\sigma \in L^2(0,\infty)$ and does not obey \eqref{d}. 
		Then there exists a $\mathcal{F}^B(\infty)$--measurable random variable $\lambda$ with $\mathbb{P}[\lambda\in \{-1,0,1\}]=1$ such that 
		\[
		\mathbb{P}\left[ \lim_{t \to \infty} \frac{X(t)}{F^{-1}(t)} =\lambda \right] = 1.
		\]
	\end{theorem}
	
	The other restriction on $\sigma$ in Theorem~\ref{thm.Xwithouttailmart} is that $\sigma\in L^2(0,\infty)$. Suppose now that $\sigma\not\in L^2(0,\infty)$ (in which case the condition \eqref{d} does not make sense, so there is no overlap with the other idiosyncratic case dealt with above). 
	We now show that when $\sigma\not\in L^2(0,\infty)$, we either have that $X(t)$ does not tend to zero, or if it does, then it does so so slowly that 
	\[
	\limsup_{t\to\infty} \frac{|X(t)|}{F^{-1}(t)}=+\infty.
	\] 
	We therefore prove the following theorem. 
	\begin{theorem} \label{thm.Xwithouttailmart2}
		Suppose that $f\in C(\mathbb{R};\mathbb{R})$ is increasing, odd and obeys \eqref{eq.fglobalstable}, \eqref{eq.fasypres}, and \eqref{eq.fasypresmu}. Suppose $F$ is defined by \eqref{def.F}. 
		Suppose $\sigma$ is continuous and $\sigma \not \in L^2(0,\infty)$.  
		Then 
		\[
		\mathbb{P}\left[ \limsup_{t \to \infty} \frac{|X(t)|}{F^{-1}(t)} =+\infty \right] = 1.
		\]
	\end{theorem}
	\begin{proof}
		Suppose, by way of contradiction to the desired result that there exists an event $A''$ of positive probability given by  
		\[
		\limsup_{t \to \infty} \frac{|X(t)|}{F^{-1}(t)}=:L<+\infty.
		\]
		Therefore, for each $\omega\in A''$ there is a $L'(\omega)\in (0,\infty)$ such that $|X(t,\omega)|\leq L' F^{-1}(t)$ for $t\geq 0$. Arguing pointwise as in the proof of part (iii) in Theorem~\ref{thm.Xwithouttailmart} above, we see that $t\mapsto f(X(t,\omega))$ is in $L^1(0,\infty)$ for each $\omega\in A''$. Also, by the definition of $A''$, we have that $X(t,\omega)\to 0$ as $t\to\infty$ for all $\omega\in A''$. Now writing the SDE in integral form, and rearrange to get
		\[
		\int_0^t \sigma(s)\,dB(s)=X(t)-\xi + \int_0^t f(X(s))\,ds, \quad t\geq 0.  
		\] 
		Therefore, on $A'$, all the terms on the righthand side have finite limits as $t\to\infty$, and therefore we have shown that there exists an event $A'$ of positive probability such that 
		\[
		\lim_{t\to\infty} \int_0^t \sigma(s)\,dB(s) \text{ is finite on $A'$ a.s.}
		\]
		Now, since $\sigma\not\in L^2(0,\infty)$, we see that the martingale
		\[
		M(t)=\int_0^t \sigma(s)\,dB(s), \quad t\geq 0
		\] 
		has deterministic square variation that obeys $\langle M\rangle(t)\to\infty$ as $t\to\infty$. Therefore, by the martingale time change theorem, and the fact that one--dimensional Brownian motion is recurrent on $\mathbb{R}$ a.s., we have 
		\[
		\limsup_{t\to\infty} M(t)=+\infty, \quad \liminf_{t\to\infty} M(t)=-\infty,
		\]
		so $M$ tends to a finite limit with probability zero. However, our supposition lead to the conclusion that $M$ has a finite limit on the event $A''$, which has a positive probability. These two conclusions are in contradiction to one another, and therefore, the original supposition that $A''$ is a positive probability event must be false. Thus $A''$ is a zero probability event, so its complement must be an a.s. event. But the complement of $A''$ is the event for which $\limsup_{t\to\infty} |X(t)|/F^{-1}(t)=+\infty$, and the proof is complete.
	\end{proof}
	
	\bibliographystyle{unsrt}

\end{document}